\documentclass[12pt,reqno]{amsart}
\usepackage{dsfont, amssymb,amsmath,amscd,latexsym, amsthm, amsxtra,amsfonts,enumerate}
\usepackage{mathrsfs}
\usepackage[all]{xy}
\usepackage[active]{srcltx}
\usepackage{graphicx}
\usepackage{bm}
\textheight225mm
\textwidth140mm
\topmargin=-0.3cm
\oddsidemargin=1.0cm
\evensidemargin=1.0cm

\newtheorem{theorem}{Theorem}[section]

\newtheorem{Def}{Definition}[section]

\newtheorem{Lemma}{Lemma}[section]
\newtheorem{Assumption}{Assumption}[section]

\newcommand{\F}{\mathcal{F}}
\def\R{{\mathbb{R}}}

\newcommand{\E}{\mathbb{E}}

\numberwithin{equation}{section}

\begin{document}
\makeatletter
\def\@setauthors{%
\begingroup
\def\thanks{\protect\thanks@warning}%
\trivlist \centering\footnotesize \@topsep30\p@\relax
\advance\@topsep by -\baselineskip
\item\relax
\author@andify\authors
\def\\{\protect\linebreak}%
{\authors}%
\ifx\@empty\contribs \else ,\penalty-3 \space \@setcontribs
\@closetoccontribs \fi
\endtrivlist
\endgroup } \makeatother
 \baselineskip 17pt
\title[{{ \tiny Robust consumption-investment problem  }}]
 {{\tiny Robust consumption-investment problem Under CRRA and CARA utilities with time-varying confidence sets}}
\author[ Zongxia Liang and Ming Ma]
{Zongxia  Liang and Ming Ma$^{ \ *}$
\vskip 10pt\noindent
 Department of Mathematical Sciences, Tsinghua
University, Beijing 100084, China
  \footnote{$^*$ Corresponding author.\\ \noindent
zliang@math.tsinghua.edu.cn(Z.Liang),liangzongxia@mail.tsinghua.edu.cn(Z.Liang), \  mam14@mails.tsinghua.edu.cn(M.Ma) }}
\noindent
\begin{abstract}
We consider a robust consumption-investment problem under CRRA and CARA utilities.
The time-varying confidence sets are specified by $\Theta$, a correspondence from $[0,T]$ to the space of L\'{e}vy triplets, and describe priori information about drift, volatility and jump. Under each possible measure, the log-price processes of stocks are semimartingales and the triplet of their differential characteristics is a measurable selector from the correspondence $\Theta$
almost surely.
By proposing and studying the global kernel, an optimal policy and a worst-case measure are generated from a saddle point of the global kernel, and they also constitute a saddle point of the objective function.
 \vskip 10pt  \noindent
 {\bf  AMS 2000 Subject Classification: }\    91B28;  93E20; 49K35.
 \vskip 5pt  \noindent
  {\bf JEL Classifications:} \   G11; C61.
 \vskip 10pt  \noindent
 {\bf Keywords:} Robust Merton problem; Knightian uncertainty;  Minimax problem; Time-varying confidence sets; Measurable saddle point; Deterministic-to-stochastic paradigm.
\end{abstract}
\maketitle
\setcounter{equation}{0}
\section{Introduction}
We consider a robust Merton (consumption-investment) problem of the form
\begin{equation}\nonumber
\begin{aligned}
\sup_{\xi} \inf_{P} \bigg\{\E^P \big[\int_0^T U(C^{\xi}_t) dt +U(W^{\xi}_T)\big]\bigg\},
\end{aligned}
\end{equation}
in a continuous-time market with jumps. Given the investment period $[0,T]$, the objective function is the expectation of the sum of cumulative and terminal utilities, which is a function of consumption process $\{C^{\xi}_t\}_{t\in[0,T]}$ and terminal wealth $W^{\xi}_T$.
The policy $\xi$ uniquely determines $\{C^{\xi}_t\}_{t\in[0,T]}$ and $W^{\xi}_T$, that is, there is a mapping from policy to consumption process and terminal wealth. The utility function $U$ is strictly concave and increasing due to the risk aversion, such as CRRA and CARA utilities.
The supremum is taken over all admissible policies, and the infimum is over all possible measures for the log-price processes of the stocks. Rigorously, a policy $\xi$ is admissible if and only if it is predictable and the values of $\{C^{\xi}_t\}_{t\in[0,T]}$ and $W^{\xi}_T$ are in the domain of $U$. The model uncertainty is parameterized by $\Theta$, a correspondence from $[0,T]$ to the space of L\'{e}vy triplets $\R^{d}\times \mathbb{S}_+^d \times \mathcal{L}$. A measure $P$ is possible if and only if the log-price processes are semimartingales under $P$, and the triplet of their differential characteristics $(b^P, \Sigma^P, F^P)$ is a measurable selector from $\Theta$ almost surely.

\vskip 5pt
The consumption-investment problem is firstly studied by Merton (1969, 1971) with the Black-Scholes model. The Black-Scholes model assumes the log-price processes of stocks are described by a drifted Brownian motion, which facilities an explicit expression of optimal strategy.
As the research progresses, people try to weaken the assumptions for the dynamic of stocks and still hope to get an explicit or semi-explicit solution.
Kallsen (2000) and Nutz (2012) both use L\'{e}vy processes to characterize the log-price, so that the wealth process is an exponential L\'{e}vy process.
Stoikov and Zariphopoulou (2005), Kallsen and Muhle-Karbe (2010) and Benth et al. (2010) propose specific models for the volatility of log-price, such as Heston model, Carr model, and Barndorff-Nielsen-Shephard model.
Usually, a complex model can characterize the reality more accurately with fewer assumptions, but more parameters will be needed.
Especially when the market is nonstationary, the parameters could be time-varying and hardly estimated.
Therefore, it is reasonable to adopt a robust model to describe the market. In practice, people only need to estimate the confidence set of each parameter to establish a robust model. We herewith consider a robust model to weaken the assumption and fortunately obtain the optimal value function and policy in semi-explicit form.

\vskip 5pt
In the field of robust optimization, many papers focus on the investment (consumption-investment) problem.  Talay and Zheng (2002) suppose all admissible measures have a same equivalent martingale measure, which is a dominated robust model. Schied (2008) adds a stochastic factor into the model of volatility, which makes the market incomplete but is still a dominated model.
For nondominated robust models, Denis and Kervarec (2013) consider an investment problem under a bounded utility function, and Lin and Riedel (2014) solve a consumption-investment problem with the issue of non-equivalent multiple priors. Furthermore, Nutz (2016) and Neufeld and Nutz (2018) extend the model of stocks to the jump diffusion, for discrete-time and continuous-time investment problems, respectively.
In this paper, we are interested in the robust consumption-investment problem with jump diffusions, which has not been studied yet.
For each possible measure $P$, the differential characteristics of log-price processes constitute a triplet $(b^P, \Sigma^P, F^P)$, which is a measurable selector from the correspondence $\Theta$ almost surely. Thus $\Theta_t$ is a subset of all L\'{e}vy triplets and contains a priori information of drift, volatility and jump at time $t$. Nontrivially, $\{\Theta_t\}_{t\in[0,T]}$ contains time-varying confidence sets, which does not appear in previous studies and causes difficulties in demonstrating the measurability of worst-case differential characteristics.

\vskip 10pt
It is well known there are two commonly used methods to solve a stochastic optimization problem, namely the dynamic programming and the martingale method. The dynamic programming is firstly proposed by Merton (1971), which is from the idea of local optimization and describes the value function as a solution of a nonlinear PDE called Hamilton-Jacobi-Bellman (HJB) equation. The martingale method is developed by Cox and Huang (1989), which comes from the idea of global optimization by solving a dual problem.
For a robust optimization, the ideas of these two methods still work.
By the concept of G-Brownian motion (cf. Peng (2007, 2010)), the dynamic programming is still available for a robust problem, whose value function satisfies a Hamilton-Jacobi-Bellman-Isaacs (HJBI) equation. This method has been used in the fields of robust hedging problem, robust investment problem and robust investment-consumption problem, such as Tevzadze et al. (2013), Lin and Riedel (2014), Fouque et al. (2016) and Biagini and P{\i}nar (2017).
By introducing the conditional risk mapping as a dynamic analogues of coherent risk, the dynamic programming still can be established for this convex risk measure (cf. Artzner et al.(1999), Shapiro and U{\u{g}}urlu (2016), U{\u{g}}urlu (2017a)). Under a general uncertainty set with a reference probability measure, U{\u{g}}urlu (2018) regards a robust optimal investment problem as a maximization problem with respect to the conditional risk mapping and then solves this problem explicitly by the robust dynamic programming equation.
However, these two approaches derived from the dynamic programming are not adequate to solve the robust problem with jumps.
Based on the idea of global optimization, a robust problem can be solved by finding its saddle point. Neufeld and Nutz (2018) characterize an optimal strategy by a saddle point of a deterministic function and accomplish the martingale argument under both logarithmic and power utilities.
U{\u{g}}urlu (2017b) directly analyzes the objective function and then obtains a saddle point by Sion's minimax theorem.

\vskip 5pt
This paper is devoted to solving the robust investment-consumption problem by the martingale method, under both CRRA and CARA utilities. We would like to name the method of Neufeld and Nutz (2018) as the martingale method of robust optimization because it is the martingale property that establishes the equivalence between a deterministic optimization and a stochastic optimization.
Following this deterministic-to-stochastic paradigm, we introduce a deterministic functional (global kernel, cf. Definitions \ref{def G_power} and \ref{def G_CARA} below), then an optimal policy and a worst-case measure can be generated from its saddle point.
There are three contributions in this paper: solving the robust investment-consumption problem with jump diffusions; treating exponential utilities; considering time-varying confidence sets. Because of these novelties, the deterministic optimization problem, which is finding a saddle point of global kernel over a product space of functionals, becomes an infinite-dimensional problem. The difficulty is inherent and cannot be avoided by any techniques because it is derived from the appearance of intertemporal consumption and time-varying confidence sets.

\vskip 5pt
The rest of this paper is organized as follows. Section 2 presents the general model of the consumption-investment problem as well as the uncertainty set of semimartingale measures. Section 3 and Section 4 solve the robust consumption-investment problem for CRRA and CARA utilities, respectively, by martingale method. In Section 5, saddle points of global kernels for CRRA and CARA utilities are found without using Sion's minimax theorem. Section 6 concludes. All proofs are in  Appendixes \ref{app1}, \ref{app monotony and conclusion}, \ref{app measurable selector} and \ref{app opt c}.

\vskip 15pt
\setcounter{equation}{0}
\section{Robust Optimization}

\subsection{General Formulation}
A general robust problem can be formulated as
\begin{equation}\label{general robust}
\begin{aligned}
\sup_{\xi \in \mathfrak{A}} \inf_{P\in\mathfrak{P}}J(\xi, P; w_0),
\end{aligned}
\end{equation}
where $J$ is the objective function, $\mathfrak{A}$ is the admissible policy set and $\mathfrak{P}$ is the uncertainty set. Below, we will introduce the definitions of $J$, $\mathfrak{A}$, and $\mathfrak{P}$ successively.

The objective function $J$ is the expectation of the sum of cumulative utilities and terminal utility.
\begin{equation}\label{obj fun}
\begin{aligned}
J(\xi, P; w_0) \triangleq\E^P\left[ \int_0^T U(C^{\xi}_t) dt  + U(W^{\xi}_T)  \right],
\end{aligned}
\end{equation}
where the expectation is taken under the measure $P$. $w_0$ is the initial wealth, which is fixed and can be omitted for convenience. $C^{\xi}_t$ is the amount of consumption at time $t$, and $W^{\xi}_T$ is the terminal wealth. They are all determined by the policy $\xi$ as well as $w_0$. Here, we do not explain the rule that how $\xi$ determines consumption process and wealth process in detail, because the rule is different between the cases of CRRA and CARA utilities, respectively, in Sections 3 and 4. The utility function $U: \mathcal{D}\rightarrow \R$ reflects the investor's preference, which is usually concave and strictly increasing due to the risk aversion. For a given $w_0$, we denote the value of \eqref{general robust} as
\begin{equation}\label{value fun}
\begin{aligned}
u(w_0)= \sup_{\xi \in \mathfrak{A}} \inf_{P\in\mathfrak{P}}J(\xi, P; w_0),
\end{aligned}
\end{equation}
thus $u$ is the value function. We call $\xi \in \mathfrak{A}$ optimal if it attains the supremum in \eqref{value fun} and $P\in\mathfrak{P}$ worst-case if attaining the infimum.

\begin{Def}\label{def strategy}
	A policy $\xi$ is admissible if and only if
	\begin{enumerate}[(i)]
		\item $\xi$ is predictable,
		\item $W^{\xi}_{T}, C^{\xi}_{t}\in \mathcal{D}$, $\forall t\geq0$, $\mathfrak{P}$-q.s,
	\end{enumerate}
	where $\mathfrak{P}$-q.s. is an abbreviation for $\mathfrak{P}$ quasi surely, which means a property holds $P$-a.s. for all $P\in \mathfrak{P}$. The admissible policy set $\mathfrak{A}$ is the set of all admissible policies.
\end{Def}
There are two essential conditions for admissible policy. The first one means the policy is established on the information from the past. The second one requires the consumption process and the terminal wealth both take values in the domain of the utility. Otherwise, the objective function would be negative infinity. For investors with CRRA utilities, the second condition prohibits the bankruptcy of terminal wealth and an injection of cash as a negative consumption.

\subsection{\bf Model uncertainty}
We choose a Skorokhod space $\Omega= D_0([0,T],\mathbb{R}^d)$ as the collection of all c\`{a}dl\`{a}g path $\{\omega_t\}_{t\in[0,T]}$ starting at origin and then there exists a natural $\sigma$-field $\F$ by the Skorokhod topology. The canonical process $X_t(\omega)=\omega_t$ can generate the natural filtration on $(\Omega, \F)$. Same as Foldes (1990), we use $X$ to describe the log-price processes of stocks in the market and the model of the market is formulated by the measure on $\Omega$.
Denote $\mathfrak{P}(\Omega)$ as the Polish space of all measures on $\Omega$ and
$\mathfrak{P}$ as the uncertainty set of all possible measures, then $\mathfrak{P}$ is much smaller than $\mathfrak{P}(\Omega)$ due to a priori information about drift, volatility and jump. Referring to the model of Chen and Epstein (2010) and Epstein and Ji (2014), we firstly introduce a correspondence $\Theta$ for defining $\mathfrak{P}$.

Let $\R^d$ be the set of all $d$-dimensional vectors with Euclidean metric $d_2$ and $\mathbb{S}_+^d$ be the set of all $d\times d$ symmetric positive definite matrices with metric $d_2$ induced from 2-norm. For any $\varepsilon\in[0,2]$, denote
\begin{equation}\nonumber
\begin{aligned}
\mathcal{L}^{\varepsilon}\triangleq\left\{\mu:  \mu(\{0\})=0, \int_{\R^d} (|z|^{2-\varepsilon} \wedge 1) \mu(dz) < +\infty\right\}
\end{aligned}
\end{equation}
as a subset of all L\'{e}vy measures and let $\mathcal{L} \triangleq \mathcal{L}^{0}$ be the set of all L\'{e}vy measures. For any $\varepsilon\in[0,2]$, we can define a metric on $\mathcal{L}$ by
\[
d_{\mathcal{L}}^{\varepsilon}(\mu,\nu) = d_{BH}^{~\varepsilon}(|x|^{2-\varepsilon}\wedge1.\mu, |x|^{2-\varepsilon}\wedge1.\nu),~\forall \mu,\nu\in\mathcal{L},
\]
where $|x|^{2-\varepsilon}\wedge1.\mu$ is the measure defined by
\[
A  \mapsto \int_A |x|^{2-\varepsilon}\wedge1 \mu(dx), ~\forall A\in\mathcal{B}(\R^d),
\]
and $d_{BH}^{~\varepsilon}$ is the metric induced by $(\varepsilon\wedge1)$-H\"{o}lder continuous functions, i.e., for any measure $\mu$ and $\nu$,
\[
d_{BH}^{~\varepsilon}(\mu,\!\nu) \!\triangleq\! \sup\left\{\!\int_{\R^d} \!f d(\mu\!   -\!\nu): f\!\in\! C_b(\R^d),~\sup_{z\neq\hat{z}} \left[ |f(z)|\!\vee\!  \frac{|f(z) \!-\! f(\hat{z})|}{|z- \hat{z}|^{\varepsilon\wedge1}}\right]\leq1 \right\}.
\]
The subscript BH and superscript $\varepsilon$ stand for boundedness and H\"{o}lder continuity with exponent $\varepsilon\wedge1$. When $\varepsilon=0$, the metric space $(\mathcal{L}, d_{\mathcal{L}}^{0})$ is exactly the space studied by Neufeld and Nutz (2014). For any positive $\varepsilon$, $(\mathcal{L}, d_{\mathcal{L}}^{\varepsilon})$ is a generalized metric space whose metric can be infinity on the set $\mathcal{L}\setminus \mathcal{L}^{\varepsilon}$, while $(\mathcal{L}^{\varepsilon}, d_{\mathcal{L}}^{\varepsilon})$ is a traditional metric space. Specifically, when $\varepsilon>0$, $d_{BH}^{~\varepsilon}$ is the Kantorovich-Rubinshtein metric with $\varrho(z, \hat{z}) = |z- \hat{z}|^{\varepsilon\wedge1}$ (cf. theorem 8.3.2, Bogachev (2007)) and thus the customized metric $d_{\mathcal{L}}^{\varepsilon}$ induces the weak convergence on $\mathcal{L}^{\varepsilon}$.

Therefore, the product space $\R^{d}\times \mathbb{S}_+^d \times \mathcal{L}$ contains all L\'{e}vy triplets. Let $\mathcal{C}$ be a compact subset of $\R^{d}\times \mathbb{S}_+^d \times \mathcal{L}$ under the maximum metric:
\begin{equation}\nonumber
\begin{aligned}
&d_\mathcal{C}^{\varepsilon}( (y,M,\mu), (\hat{y},\hat{M},\hat{\mu}) )
\triangleq  d_2( y, \hat{y})\vee d_2( M, \hat{M})\vee d_\mathcal{L}^{\varepsilon}( \mu, \hat{\mu}) ,\\
&\forall (y,M,\mu), (\hat{y},\hat{M},\hat{\mu})\in \R^{d}\times \mathbb{S}_+^d \times \mathcal{L},
\end{aligned}
\end{equation}
for some $\varepsilon\in(0,2]$. Thus $(\mathcal{C}, d_\mathcal{C}^{\varepsilon})$ is a separable metrizable space and has finite elements in $\mathcal{L}\setminus \mathcal{L}^{\varepsilon}$.
Generally speaking, the condition $\varepsilon\in (0,2]$ guarantees the existence of a measurable saddle point of global kernel, which will be thoroughly explained in Section 5 again.

Let $\Theta$ be a weakly measurable correspondence from $[0,T]$ to $\mathcal{C}$, describing the range of all possible differential characteristics. The definition of weakly measurable correspondence is given by Aliprantis and Border (2006), definition 18.1, which is recall here for convenience.
\begin{Def}[Aliprantis and Border (2006), 18.1 Definition]\label{def weakly measurable correspondence}
Let $(S, \Sigma)$ be a measurable space and $X$ a topological space. We say that a  correspondence $\varphi:S \twoheadrightarrow X$ is weakly measurable, if $\varphi^l(G)\in\Sigma$  for each open subset $G$ of $X$,
where $\varphi^l$ is the lower inverse (also called the weak inverse) of the correspondences $\varphi$ and is defined by
\[
\varphi^l(A) = \{ x\in X :  \varphi(x) \cap A \neq \varnothing\}.
\]
\end{Def}
Any measurable selector from correspondence $\Theta$ is called the possible PII triplet because it has three components and
corresponds to a possible PII measure introduced below.
For convenience in notation, we give a convention that $\Theta$ also represents the set of all possible PII triplets, i.e.,
\[
\{ \theta: [0,T] \rightarrow \mathcal{C}| ~\mathrm{ Borel~measurable,}~ \theta_t\in \Theta_t, \forall t\in[0,T]\}.
\]
This convention about $\Theta$ would not cause any misunderstanding because one is a set and the other is a set-valued function. In previous studies of robust optimization, the confidence set $\Theta_t$ is usually independent of the time $t$, therefore, $\Theta$ is a constant correspondence. The problem with constant correspondence is trivial because it can be easily solved by proposing the local kernel without considering the measurability of the optimal policy and the worst-case PII triplet.

\begin{Def}\label{def uncertainty set}
$P$ is a possible measure if and only if $P\in \mathfrak{P}(\Omega)$ and under $P$,
\begin{enumerate}[(i)]
\item $X$ is a semimartingale with predicable characteristics $(\beta^P,  \alpha^P, \nu^P)$;
\item there exists a $\theta^P:  [0,T] \times \Omega \rightarrow \mathcal{C}$  such that $\theta^P = (b^P, \Sigma^P , F^P ) \in \Theta$ and
    \begin{equation}\nonumber
    \begin{aligned}
    \beta^P_t = \int_0^t b^P_s  ds, ~
    \alpha^P_t = \int_0^t \Sigma_s^P  ds, ~
    \nu^P(ds,dy)= F^P_s(dy)ds,
    \end{aligned}
    \end{equation}
    almost surely.
\end{enumerate}
The triplet $\theta^P =(b^P, \Sigma^P, F^P)$ is called the DC (differential characteristics) triplet of $X$ under $P$ and $\mathfrak{P}$ is the set of all possible measures.
\end{Def}
Definition \ref{def uncertainty set} describes how to parameterize the model uncertainty by a correspondence $\Theta$. The uncertainty set contains all the semimartingale measures, under which the DC triplet of $X$ is a PII triplet almost surely. For a possible measure $P$ whose associated $\theta^P$ is independent of the probability space, we call it a possible PII measure because the log-price processes are processes with independent increments under this measure (cf. theorem \uppercase\expandafter{\romannumeral2}.4.15, Jacod and Shiryaev (2013)).
The set of all possible PII measures is
\begin{equation}\label{def of P0}
\begin{aligned}
\mathfrak{P}_0  \triangleq \{ P\in\mathfrak{P}: \exists~\theta\in \Theta,  \theta^P(\omega) = \theta, ~\forall \omega\in\Omega\},
\end{aligned}
\end{equation}
which has a one-to-one relationship with $\Theta$, the set of all possible PII triplets.
Finally, to make the uncertainty set have better properties, we give assumptions on $\Theta$ as the end of this section.

\begin{Assumption}\label{ass_2.2}
For each $t$, denote
\begin{equation}\nonumber
\begin{aligned}
\mathbf{S}_t  \triangleq \bigcup_{(y,M,\mu)\in \Theta_t} \mathrm{supp}(\mu)
\end{aligned}
\end{equation}
as the union of all supports of L\'{e}vy measures in $\Theta_t$. Suppose $\mathbf{S}_t $ is closed, bounded, and non-degenerate, i.e., there is a positive number $\kappa_t$ such that
\begin{equation}\nonumber
\begin{aligned}
\{z: |z|\leq \kappa_t^{-1}\}\subseteq \mathrm{Conv}(\mathbf{S}_t\cup{0})\subseteq\{z: |z|\leq \kappa_t\},
\end{aligned}
\end{equation}
where $\mathrm{Conv}(\cdot)$ represents the convex hull of a set.
\end{Assumption}

Assumption \ref{ass_2.2} formulates the set $\mathbf{S}_t$ of all possible jump sizes and demands $\mathbf{S}_t$  is a closed set, which is also bounded and non-degenerate by a number $\kappa_t$. Though the support of any L\'{e}vy measure is closed, it is not trivial to assume $\mathbf{S}_t$ is closed because it can be an infinite union.
The closeness of $\mathbf{S}_t$ actually contributes to the existence of an optimal policy over a nonclosed admissible policy set.
In the definition of non-degeneracy, we require the convex hull of $\mathbf{S}_t\cup{0}$ contains a neighborhood of origin, because the set $\mathbf{S}_t$ may not be convex in general. Moreover, in the case of CRRA utility (see \eqref{bound of O_t} below), it is enough to deduce the boundedness of admissible investment policy through the constraint on $\mathrm{Conv}(\mathbf{S}_t\cup{0})$.
The boundedness of jumps simplifies the differential notation for $X$  by omitting the truncation function (cf. definition \uppercase\expandafter{\romannumeral1}.2.6, Jacod and Shiryaev (2013)). For each $P\in\mathfrak{P}$,
\begin{equation}\label{sde of X}
\begin{aligned}
dX_t&=  b^P_t dt + \sigma^P_t dB_t +\int_{\R^d}z \tilde{\nu}^P(dz,ds),
\end{aligned}
\end{equation}
where $(b^P, \Sigma^P, F^P)$ is the DC triplet of $X$ under $P$ and $\tilde{\nu}^P$ is the compensated random measure with compensator $\nu^P$. $\sigma^P_t$ is a $d\times d$ lower triangular matrix with positive diagonal entries and satisfies $\sigma^P_t\times (\sigma^P_t)^T = \Sigma^P_t$, which is unique by Cholesky decomposition (cf. Golub and Van Loan (2012)). $\{B_t\}_{t\geq0}$ is the corresponding $P$-Brownian motion with respect to $\sigma^P$.

\begin{Assumption}\label{ass_2.1}
$\Theta$ is closed-valued and convex-valued, i.e., $\Theta_t$ is a closed and convex set for every $t\in[0,T]$.
\end{Assumption}
Assumption \ref{ass_2.1} requires each confidence set is convex and closed, which is common in an optimization problem. If $\Theta_t$ is not a singleton, $\Theta_t$ must be contained in $\R^{d}\times \mathbb{S}_+^d \times \mathcal{L}^\varepsilon$, otherwise $\mathcal{C}$ has infinitely many elements in $\mathcal{L}\setminus \mathcal{L}^{\varepsilon}$ and is not compact. Furthermore, since $\mathcal{C}$ is compact, $\Theta_t$ is bounded convex and compact, which contributes to the existence of a worst-case PII triplet.
In the range $(0,2]$ of $\varepsilon$, there are two significant values. When $\varepsilon=1$, the log-price processes have jumps of finite variation under each possible measure by theorem 21.9 of Sato (1999). When $\varepsilon=2$, under each possible measure, the log-price processes are compound non-homogeneous Poisson processes with finite activity by theorem 21.3 of Sato (1999).
Finally, for avoiding redundant notations, we further assume the $\kappa_t$ in Assumption \ref{ass_2.2} is the bound of each $\Theta_t$, i.e.,
\[
  \sup_{(y,M,\mu) \in \Theta_t}\{ d_2(y , 0) \vee d_2(M , 0) \vee d_\mathcal{L}^\varepsilon(\mu , 0)\}\leq \kappa_t.
\]

\vskip 15pt
\setcounter{equation}{0}
\section{CRRA Utilities}
We consider a family of CRRA utilities
\begin{equation}\label{def of CRRA}
U(x)= \left\{
\begin{aligned}
&\log(x),&   p=0,\\
&\frac{x^p}{p}, & p\in (-\infty,0)\cup(0,1),
\end{aligned}
\right.
\end{equation}
with coefficient $p$ and domain $\mathcal{D} = (0,+\infty)$. $1-p$ has significance as the coefficient of relative risk aversion (cf. Pratt (1964)).
This section consists of three parts. The first one specifies the rule of policy, that is, how to determine the consumption process and terminal wealth by an admissible policy. Then we introduce a deterministic function called global kernel and show its relationship with the objective function. Finally, a saddle point of global kernel generates an optimal policy and a worst-case measure of the robust consumption-investment problem.

\subsection{Admissible Policy}
Under CRRA utility, it is usual to choose the percentage of the wealth invested in stocks and the amount of consumption per time unit to maximize the objective function, such as Merton (1969) and Foldes (1990). However, based on the remark 2.1 of Nutz (2010), we set the policy $\xi=(\pi, c)=(\{\pi_t\}_{t\in[0,T]}, \{c_t\}_{t\in[0,T]})$, where $\pi_t$ is the investment amount to wealth ratio and $c_t$ is the consumption amount to wealth ratio at time $t$. Under this rule, the wealth process $\{W_t^{(\pi,c)}\}_{t\in[0,T]}$ is the solution of
\begin{equation}\label{sde of W_pic}
\left\{
\begin{aligned}
dW^{(\pi,c)}_t&=   \pi_t W^{(\pi,c)}_t dX_t - c_tW^{(\pi,c)}_tdt,\\
W^{(\pi,c)}_0 &= w_0,
\end{aligned}
\right.
\end{equation}
and the amount of consumption at time $t$ is  $C_t^{(\pi, c)} =  c_t W_t^{(\pi, c)}$.
From the second condition in Definition \ref{def strategy},  the consumption ratio $c_t$ must be non-negative and the investment ratio must satisfy
\[
\pi_t ^Tz>-1, \forall z\in \mathbf{S}_t, t\in[0,T].
\]
The latter is a direct conclusion from the theorem I.6.61 of Jacod and Shiryaev (2013) and the discussion of Karatzas and Kardaras (2007). Hence the admissible policy set $\mathfrak{A}$ can be expressed elegantly by restricting the policies.
\begin{equation}\nonumber
\!\!\!\!\begin{aligned}
\mathfrak{A}\! =\! \big\{&(\bar{\pi}, \bar{c}) \!:\! [0,\!T]\!\times  \Omega \! \rightarrow\! \R^d\!\!\times\!\R_+ \big| \text{ predictable},\!\!
&\bar{\pi}_t \in \mathcal{O}_t, \forall t\in[0,\!T], \mathfrak{P}\text{-q.s.} \big\},
\end{aligned}
\end{equation}
where $\mathcal{O}_t \triangleq \{y\in\R^d : x^Tz>-1, \forall z\in \mathbf{S}_t  \}$.
Recalling the condition of non-degeneracy in Assumption \ref{ass_2.2}, we have
\begin{equation}\label{bound of O_t}
\begin{aligned}
\mathcal{O}_t
=& \left\{x\in\R^d \!: x^Tz\!>\!-1, \forall z\in \mathbf{S}_t  \right\}\\
=& \left\{x\in\R^d \!: x^Tz\!>\!-1, \forall z\in \mathrm{Conv}(\mathbf{S}_t\cup{0})  \right\}\\
\subseteq& \left\{x\in\R^d \!: x^Tz\!>\!-1, \forall z\in \{x\in \R^d: d_2(x,0)\!\leq\! \kappa_t^{-1}\}  \right\}\\
=& \left\{x\in \R^d\!: d_2(x,0)\!<\! \kappa_t\right\},
\end{aligned}
\end{equation}
thus $\mathcal{O}_t $ is bounded by $\kappa_t$. Moreover, it can be verified that $\mathcal{O}_t $ is a bounded convex set and contains the origin.

\subsection{Global kernel for CRRA utilities}
The global kernel $G$ is a functional defined on $\mathfrak{A}_0\times \Theta$ and is closely related to the objective function (see Lemma \ref{J=G} below). Firstly, we introduce the deterministic policy set $\mathfrak{A}_0$ as a domain of the global kernel.

\begin{Def}\label{def A0}
\begin{equation}\nonumber
\begin{aligned}
\mathfrak{A}_0 \triangleq \big\{&(\pi,c): [0,T] \rightarrow \R^d\times \R_+ ~\big|
~\mathrm{measurable}, \pi_t\in \mathcal{O}_t, \forall t\in[0,T] \big\}
\end{aligned}
\end{equation}
is the set of all deterministic admissible policies.
\end{Def}

\begin{Def}\label{def genertae strategy}
$(\tilde{\pi}, \tilde{c})\in\mathfrak{A}$ is called the generated policy from $(\pi,c)$, if $(\pi,c)\in\mathfrak{A}_0$ and
\[
\tilde{\pi}_t(\omega)=\pi_t, \tilde{c}_t(\omega) = c_t, \forall t\in[0,T], \omega\in\Omega.
\]
We say a deterministic admissible policy is optimal, if its generated policy attains the supremum in \eqref{value fun}.
\end{Def}
$\mathfrak{A}_0$ and $\mathfrak{A}$ have a close connection while $\mathfrak{A}_0$ is independent of the probability space. Retrospecting the expression of $\mathfrak{A}$, we notice $(\bar{\pi}(\omega), \bar{c}(\omega))\in \mathfrak{A}_0$ quasi surely for any $(\bar{\pi}, \bar{c}) \in \mathfrak{A}$. Meanwhile, any deterministic admissible policy in $\mathfrak{A}_0$ can generate an admissible policy in $\mathfrak{A}$ by Definition \ref{def genertae strategy}. Due to their close relationship, we can regard the elements in $\mathfrak{A}_0$ as in $\mathfrak{A}$ if there is no misunderstanding.

\begin{Def}\label{def g CRRA}
The local kernel (at time $t$) is a function defined on
$ \mathcal{O}_t\times \Theta_t$:
\begin{equation}\nonumber
\begin{aligned}
&g_t^{(y, M, \mu)}(x)
\!\triangleq\!
x^Ty \!-\! \frac{1\!-\!p}{2}x^T M x +\!\! \int_{\R^d}\!\!\! \left(U(1\!+\! x^T z)\!-\! U(1) \!-\! x^T z\right) \!\mu(dz),
\end{aligned}
\end{equation}
where $x\in\mathcal{O}_t$ and $(y, M, \mu)\in \Theta_t$.
\end{Def}

\begin{Def}\label{def G_power}
The global kernel $G$ (over period $[0, T]$) is a two-variable function defined on $\mathfrak{A}_0\times \Theta$. For every $(\pi,c)\in \mathfrak{A}_0$ and $\theta\in\Theta$,
\begin{equation}\nonumber
\begin{aligned}
G((\pi,c),\theta)
\!\triangleq\!
\left\{
\begin{aligned}
&\!\!\int_0^T \!\!\!\bigg(\! \int_0^t\! (g^{\theta_s}(\pi_s) \!-\!c_s) ds \!+\!\!g^{\theta_t}(\pi_t) \!-\!c_t+U(c_t)\bigg)dt, &p=0,\\
&\!\!\int_0^T\!\!\! \exp\!\bigg( \!\int_0^t\! \!\!p(g^{\theta_s}(\pi_s) \!-\!c_s) ds\!\bigg) \bigg(\!g^{\theta_t}\!(\pi_t) \!-\!c_t\!+\!U(c_t)\! \bigg) dt, \!\!&\!\!p\neq0.
\end{aligned}
\right.
\end{aligned}
\end{equation}
\end{Def}
Observing the expression of $g_t$ in Definition \ref{def g CRRA}, we notice that the domain $\mathfrak{A}_0\times \Theta$ depends on time $t$, but the function $g_t$ is independent. So we omit the subscript of $g_t$ as in Definition \ref{def G_power}.
The global kernel can be expressed by the policy and PII triplet directly, not through any intermediate variables, such as the consumption process and the terminal wealth. This is the advantage of the global kernel in contrast to the objective function.
The motivation of proposing global kernel comes from  the martingale equality in the following lemma.
\begin{Lemma}\label{J=G}
For any $(\bar{\pi}, \bar{c}) \in \mathfrak{A}$ and any $P\in\mathfrak{P}$, the objective function \eqref{obj fun} can be expressed by $(\bar{\pi}, \bar{c})$, $\theta^P$ and $P$ directly.
\begin{equation}\nonumber
\begin{aligned}
&J((\bar{\pi}, \bar{c}),P)\!
=\!\!\left\{\!\!
\begin{aligned}
&(T\!+\!1)\log(w_0) \!+\!\int_\Omega  G((\bar{\pi},\bar{c}),\theta^P) P(d\omega),&p=0,\\
&w_0^p\bigg(\frac{1}{p}\!+\!
\int_\Omega  G((\bar{\pi},\bar{c}),\theta^P) Q^{(\bar{\pi},\theta^P)}(d\omega)\bigg),&p\neq0.
\end{aligned}
\right.
\end{aligned}
\end{equation}
For $p\neq0$, $Q^{(\bar{\pi},\theta^P)}$ is the equivalent measure whose Radon-Nikodym derivative is
\begin{equation}\nonumber
\begin{aligned}
\frac{dQ^{(\bar{\pi},\theta^P)}}{dP}(T)&= \exp\left\{ \int_0^T p\bar{\pi}_t^T \sigma^P_tdB_t- \frac{1}{2}p^2\bar{\pi}_t^T\Sigma^P_t\bar{\pi}_t dt \right\}\\
&\times\exp\!\bigg\{\!\! \int_0^T\!\!\int_{\R^d}\!p\log(1+\bar{\pi}_t^Tz)\tilde{\nu}^P(dz,dt) \\ &+\!\int_0^T\!\!\int_{\R^d}\!\big( p\log(1\!+\!\bar{\pi}_t^T z)+1\!-\!(1\!+\!\bar{\pi}_t^Tz)^p\big) F^P_t(dz)dt\!\bigg\}.
\end{aligned}
\end{equation}
\end{Lemma}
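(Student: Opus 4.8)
The plan is to solve the linear wealth equation \eqref{sde of W_pic} in closed form, substitute into \eqref{obj fun}, and reorganize. Since $\bar\pi_t\in\mathcal O_t$ and, under $P$, the jumps of $X$ lie in $\mathrm{supp}(F^P_t)\subseteq\mathbf S_t$, one has $1+\bar\pi_t^{T}\Delta X_t>0$ $P$-a.s., so the solution of \eqref{sde of W_pic} is the strictly positive stochastic (Dol\'eans–Dade) exponential $W^{(\bar\pi,\bar c)}_t=w_0\,\mathcal E\big(\int_0^{\cdot}\bar\pi_s\,dX_s-\int_0^{\cdot}\bar c_s\,ds\big)_t$. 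First I would use \eqref{sde of X} to split $\int_0^{\cdot}\bar\pi_s\,dX_s$ into its drift, continuous-martingale and compensated-jump parts, apply the identity $\log\mathcal E(Y)=Y-\tfrac12[Y^{c}]+\sum(\log(1+\Delta Y)-\Delta Y)$ for a positive exponential, rewrite the jump sum as $\int\!\int(\log(1+\bar\pi_s^{T}z)-\bar\pi_s^{T}z)\,\mu^{X}(dz,ds)$ with $\mu^{X}$ the jump measure of $X$, and then split $\mu^{X}=\tilde\nu^{P}+\nu^{P}$. Collecting the finite-variation terms and comparing with Definition \ref{def g CRRA} (this comparison is exactly where the precise shape of the local kernel $g$ is used — the coefficient $\tfrac{1-p}{2}$ absorbs $-\tfrac12[Y^{c}]$, and $U(1+x^{T}z)-U(1)-x^{T}z$ absorbs the compensator of the jump term), one obtains the pathwise identity $\log\big(W^{(\bar\pi,\bar c)}_t/w_0\big)=\int_0^{t}\big(g^{\theta^{P}_s}(\bar\pi_s)-\bar c_s\big)\,ds+N_t$ with $N_t=\int_0^{t}\bar\pi_s^{T}\sigma^{P}_s\,dB_s+\int_0^{t}\!\int\log(1+\bar\pi_s^{T}z)\,\tilde\nu^{P}(dz,ds)$ a local martingale. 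Running the same computation of $p\log\big(W^{(\bar\pi,\bar c)}_t/w_0\big)$ against the $p\neq0$ kernel and exponentiating yields the factorization $\big(W^{(\bar\pi,\bar c)}_t\big)^{p}=w_0^{p}\exp\!\big(\int_0^{t}p(g^{\theta^{P}_s}(\bar\pi_s)-\bar c_s)\,ds\big)\,D_t$, where $D_t$ is precisely the stated Radon–Nikodym density with $T$ replaced by $t$, and one recognizes $D=\mathcal E\big(\int_0^{\cdot}p\bar\pi_s^{T}\sigma^{P}_s\,dB_s+\int_0^{\cdot}\!\int((1+\bar\pi_s^{T}z)^{p}-1)\,\tilde\nu^{P}(dz,ds)\big)$, a nonnegative local martingale with $D_0=1$.

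For $p=0$: I would argue $N$ is a true $P$-martingale (predictable $\bar\pi$ bounded by $\kappa_{\cdot}$, $\Sigma^{P}$ bounded, $F^{P}$ supported in the bounded set $\mathbf S_{\cdot}$), giving $\E^{P}[\log W^{(\bar\pi,\bar c)}_t]=\log w_0+\E^{P}[\int_0^{t}(g^{\theta^{P}_s}(\bar\pi_s)-\bar c_s)\,ds]$; since $C^{(\bar\pi,\bar c)}_t=\bar c_tW^{(\bar\pi,\bar c)}_t$ we have $U(C^{(\bar\pi,\bar c)}_t)=U(\bar c_t)+\log W^{(\bar\pi,\bar c)}_t$, and inserting this into \eqref{obj fun} and using Fubini to pull the $\log W$ terms through the outer $dt$-integral reassembles $\int_{\Omega}G((\bar\pi,\bar c),\theta^{P})\,P(d\omega)$ from Definition \ref{def G_power} plus the deterministic remainder $\int_0^{T}\log w_0\,dt+\log w_0=(T+1)\log w_0$. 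For $p\neq0$: I would verify $D$ is a true $P$-martingale, so $Q^{(\bar\pi,\theta^{P})}$ is a probability measure with density $D_T$ as stated; then $U(C^{(\bar\pi,\bar c)}_t)=\big(W^{(\bar\pi,\bar c)}_t\big)^{p}U(\bar c_t)$ and $U(W^{(\bar\pi,\bar c)}_T)=\tfrac{w_0^{p}}{p}\exp(\int_0^{T}p(g^{\theta^{P}_s}(\bar\pi_s)-\bar c_s)\,ds)\,D_T$. The fundamental theorem of calculus gives $\tfrac1p\big(\exp(\int_0^{T}p(g^{\theta^{P}_s}(\bar\pi_s)-\bar c_s)\,ds)-1\big)=\int_0^{T}\exp(\int_0^{t}p(g^{\theta^{P}_s}(\bar\pi_s)-\bar c_s)\,ds)\,(g^{\theta^{P}_t}(\bar\pi_t)-\bar c_t)\,dt$, which converts the terminal-utility contribution into the $g^{\theta_t}(\pi_t)-c_t$ part of the global kernel; combining $\E^{P}[D_t Z_t]=\E^{Q^{(\bar\pi,\theta^{P})}}[Z_t]$ for $\F_t$-measurable $Z_t$ with Fubini then yields $J((\bar\pi,\bar c),P)=w_0^{p}\big(\tfrac1p+\int_{\Omega}G((\bar\pi,\bar c),\theta^{P})\,Q^{(\bar\pi,\theta^{P})}(d\omega)\big)$.

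The main obstacle is the true-martingale property of $N$ (resp.\ $D$) and, relatedly, the interchanges of expectation, change of measure and time-integration that the argument relies on: for $p<0$ the factors $(1+\bar\pi_s^{T}z)^{p}$ and $\log(1+\bar\pi_s^{T}z)$ are unbounded as $\bar\pi_s$ approaches the boundary of the open set $\mathcal O_s$, so the requisite integrability is not automatic. I expect to handle this by localizing along stopping times that keep $\bar\pi^{T}\Delta X$ bounded away from $-1$, proving the identity for each localized problem, and passing to the limit by monotone/dominated convergence, with the understanding that the asserted equality is to be read in $[-\infty,+\infty)$ (e.g.\ if $J((\bar\pi,\bar c),P)=-\infty$ because $\bar c_t=0$ on a set of positive measure, the right-hand side is $-\infty$ as well).
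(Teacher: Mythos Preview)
Your proposal is correct and follows essentially the same route as the paper: apply It\^o's formula (equivalently, the Dol\'eans--Dade exponential) to obtain the pathwise decomposition $\log W^{(\bar\pi,\bar c)}_t=\log w_0+\int_0^t(g^{\theta^P_s}(\bar\pi_s)-\bar c_s)\,ds+N_t$, take expectations for $p=0$, and for $p\neq0$ exponentiate $p\log W_t$, factor out the density $D_t$, change measure, and use the fundamental theorem of calculus on the terminal term. The paper's proof is terser and simply takes expectations without discussing the true-martingale property of $N$ or $D$; your localization plan near the boundary of $\mathcal O_s$ is a legitimate way to fill that gap, which the paper leaves implicit.
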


\begin{proof}
See Appendix \ref{app1}.
\end{proof}

We primarily use the property of exponential martingales to rewrite the objective function in Lemma \ref{J=G}.
In Equation \eqref{obj fun}, the objective function $J$ depends on the policy $\xi=(\bar{\pi}, \bar{c})$ through the intermediate variables $\{C^{\xi}_t\}_{t\in[0,T]}$ and $W^{\xi}_T$. While in Lemma \ref{J=G}, it is  expressed as a function of $(\bar{\pi}, \bar{c})$, $\theta^P$, and $P$ without any intermediate variables by using global kernel. Above equality of objective function and global kernel is very important in establishing the relationship between saddle points of $J$ and $G$.

\subsection{Optimal policy and worst-case measure}
We present an assumption for the confidence sets and then demonstrate the existence of an optimal policy and a worst-case measure of the robust consumption-investment problem.  For concentrating on the robust problem, we postpone the study about global kernel in Section 5 and acquiesce to the existence of its saddle point.
\begin{Assumption}\label{ass_CRRA}
For any $p<0$ and $t\in[0,T]$,
\begin{equation}\nonumber
\begin{aligned}
(b_t, \Sigma_t, F_t)\in\Theta_t \Rightarrow b_t^T\Sigma_t^{-1}b_t\leq \frac{2(1-p)^2}{-p}.
\end{aligned}
\end{equation}
\end{Assumption}

    \begin{figure}[htbp]
    \centering
    \includegraphics[width=0.8\textwidth]{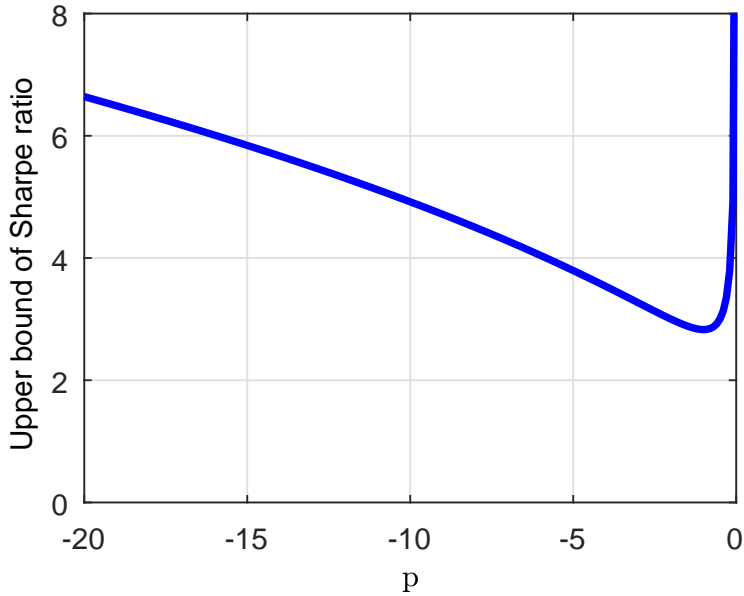}
    \caption{The upper bound of Sharpe ratio for each $p<0$.}
    \label{fig crra}
    \end{figure}

The condition in Assumption \ref{ass_CRRA} is relaxed enough in the real market. Since $((b_t^P)^T(\Sigma^P_t)^{-1}b_t^P)^{\frac{1}{2}}$ is the Sharpe ratio of the diffusion process under measure $P$, Assumption \ref{ass_CRRA} indeed gives the upper bounds of Sharpe ratio under all possible measure for every $p$. The upper bound is drawn in Figure \ref{fig crra} and attains the minimum as $2\sqrt{2}$ at $p=-1$. For a market with Sharpe ratio $2\sqrt{2}$, the probability of a negative yield is $\Phi(-2\sqrt{2})\approx 0.23\%$ after a year of investment.
According to the annual returns of S\&P 500 (\^{}GSPC) Index from Jan 01, 1950 to Jan 01, 2018,
there are 17 years that have negative returns, thus the probability of a negative yield is $17/68 = 25\%$, which is much greater than  $0.23\%$. Referring to the Table C1 of Frazzini et al. (2013), we notice that the Sharpe ratio of overall US stocks is 0.39, and the Sharpe ratio of Buffett performance is about 0.7, which are both less than $2\sqrt{2}$. Hence, Assumption \ref{ass_CRRA} is  undemanding and realistic. It plays an important role in finding a saddle point of the global kernel (see Section 5 below). Conceptually, it precludes the good market in which over-consumption occurs and thus simplifies the matters considered in kernel analyses.

As the end of this section, we present the main result of CRRA utilities that a saddle point of global kernel can generate an optimal policy and a worst-case measure.

\begin{theorem}\label{thm_CRRA}
There exist an optimal policy and a worst-case measure of the robust consumption-investment problem, denoted by $(\tilde{\pi}^*, \tilde{c}^*)$ and $P^*$. Moreover,
\begin{enumerate}[(i)]
\item $((\tilde{\pi}^*, \tilde{c}^*), P^*)$ is a saddle point of objective function $J$;
\item $(\tilde{\pi}^*, \tilde{c}^*)$ is the generated policy from a $({\pi}^*, {c}^*)\in \mathfrak{A}_0$;
\item $P^*$ is the possible PII measure whose DC triplet equals $\theta^*$;
\item $(({\pi}^*, {c}^*), \theta^*)$ is a saddle point of global kernel $G$;
\item the value function is
\begin{equation}\nonumber
\begin{aligned}
u(w_0)=\!\!\left\{
\begin{aligned}
&(T\!+\!1)\log(w_0) \!+\! G((\pi^*,c^*),\theta^*),&p=0,\\
&\frac{w_0^p}{p}\!+\!w_0^p
  G((\pi^*,c^*),\theta^*),&p\neq0.
\end{aligned}
\right.
\end{aligned}
\end{equation}
\end{enumerate}
\end{theorem}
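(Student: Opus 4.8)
The plan is to derive a saddle point of the objective functional $J$ on $\mathfrak{A}\times\mathfrak{P}$ from the (already granted) saddle point of the global kernel $G$ on $\mathfrak{A}_0\times\Theta$, using Lemma \ref{J=G} as the bridge, and then to read off all five assertions by a routine minimax bookkeeping. Let $(({\pi}^*,{c}^*),\theta^*)\in\mathfrak{A}_0\times\Theta$ be a saddle point of $G$ (its existence, under Assumption \ref{ass_CRRA}, is the content of Section 5), let $(\tilde\pi^*,\tilde c^*)\in\mathfrak{A}$ be its generated policy in the sense of Definition \ref{def genertae strategy}, and let $P^*$ be the possible PII measure with DC triplet $\theta^*$; such a $P^*$ exists, lies in $\mathfrak{P}_0\subseteq\mathfrak{P}$, and is singled out by the one-to-one correspondence in \eqref{def of P0} (cf. Jacod and Shiryaev (2013), Theorem II.4.15). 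This choice immediately yields (ii), (iii), (iv), as well as $(\tilde\pi^*,\tilde c^*)\in\mathfrak{A}$ and $P^*\in\mathfrak{P}$.

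Next I would check the two saddle inequalities for $J$. \emph{Lower side.} Fix $(\bar\pi,\bar c)\in\mathfrak{A}$. Since $\theta^{P^*}\equiv\theta^*$ is constant, Lemma \ref{J=G} writes $J((\bar\pi,\bar c),P^*)$ as $(T{+}1)\log w_0+\int_\Omega G((\bar\pi,\bar c),\theta^*)\,dP^*$ for $p=0$, and as $w_0^p\bigl(\tfrac1p+\int_\Omega G((\bar\pi,\bar c),\theta^*)\,dQ^{(\bar\pi,\theta^*)}\bigr)$ for $p\neq0$, where $w_0^p>0$ and $Q^{(\bar\pi,\theta^*)}$ is a probability measure equivalent to $P^*$. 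Because $(\bar\pi(\omega),\bar c(\omega))\in\mathfrak{A}_0$ for $\mathfrak{P}$-q.s.\ $\omega$ and $({\pi}^*,{c}^*)$ maximizes $G(\cdot,\theta^*)$ over $\mathfrak{A}_0$, one has $G((\bar\pi(\omega),\bar c(\omega)),\theta^*)\leq G(({\pi}^*,{c}^*),\theta^*)$ q.s., hence $Q^{(\bar\pi,\theta^*)}$-a.s.\ as well; integrating against the probability measure gives $J((\bar\pi,\bar c),P^*)\leq J((\tilde\pi^*,\tilde c^*),P^*)$, with equality for the constant path $(\tilde\pi^*,\tilde c^*)$. \emph{Upper side.} Fix $P\in\mathfrak{P}$. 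Then $\theta^P\in\Theta$ $P$-a.s., and since $\theta^*$ minimizes $G(({\pi}^*,{c}^*),\cdot)$ over $\Theta$, $G(({\pi}^*,{c}^*),\theta^P)\geq G(({\pi}^*,{c}^*),\theta^*)$ $P$-a.s.; as $Q^{(\pi^*,\theta^P)}\sim P$ and $w_0^p>0$, Lemma \ref{J=G} yields $J((\tilde\pi^*,\tilde c^*),P)\geq J((\tilde\pi^*,\tilde c^*),P^*)$, with equality at $P=P^*$. Combining,
\[
\sup_{\xi\in\mathfrak{A}}J(\xi,P^*)=J((\tilde\pi^*,\tilde c^*),P^*)=\inf_{P\in\mathfrak{P}}J((\tilde\pi^*,\tilde c^*),P),
\]
which is precisely assertion (i).

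Then (i), together with the trivial weak-duality bound $\sup_\xi\inf_P J\leq\inf_P\sup_\xi J$, forces
\[
\inf_{P}\sup_{\xi}J\leq\sup_{\xi}J(\xi,P^*)=J((\tilde\pi^*,\tilde c^*),P^*)=\inf_{P}J((\tilde\pi^*,\tilde c^*),P)\leq\sup_{\xi}\inf_{P}J\leq\inf_{P}\sup_{\xi}J,
\]
so all quantities equal the common saddle value; hence $(\tilde\pi^*,\tilde c^*)$ attains the supremum in \eqref{value fun} (it is optimal) and $P^*$ attains the corresponding infimum (it is worst-case), establishing the existence claim. Finally, evaluating $u(w_0)=J((\tilde\pi^*,\tilde c^*),P^*)$ through Lemma \ref{J=G} with the constant triplet $\theta^*$—so that the $\Omega$-integrals of the constant integrand $G(({\pi}^*,{c}^*),\theta^*)$ collapse—gives the formula in (v).

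The step I expect to be the most delicate is the lower saddle inequality: it requires transferring $G$-maximality from the deterministic class $\mathfrak{A}_0$ to an arbitrary stochastic $(\bar\pi,\bar c)\in\mathfrak{A}$ pathwise (using $(\bar\pi(\omega),\bar c(\omega))\in\mathfrak{A}_0$ q.s.\ and the joint measurability of $\omega\mapsto G((\bar\pi(\omega),\bar c(\omega)),\theta^*)$), and it leans on $Q^{(\bar\pi,\theta^*)}$ being a genuine probability measure rather than merely a supermartingale density—facts that in turn rest on the boundedness of $\mathcal{O}_t$ and of the jump supports built into Assumptions \ref{ass_2.2}--\ref{ass_2.1} and on the exponential-martingale identities in the proof of Lemma \ref{J=G}.
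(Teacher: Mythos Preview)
Your proposal is correct and follows essentially the same approach as the paper: both use Lemma \ref{J=G} together with the pathwise inclusion $(\bar\pi(\omega),\bar c(\omega))\in\mathfrak{A}_0$ and the bijection $\mathfrak{P}_0\leftrightarrow\Theta$ to lift the saddle point of $G$ to one of $J$. Your organization is slightly more direct---you verify the two saddle inequalities at the candidate $((\tilde\pi^*,\tilde c^*),P^*)$ and then invoke weak duality---whereas the paper establishes $\sup_\xi\inf_P J$ and $\inf_P\sup_\xi J$ separately via two chains of inequalities, but the substantive steps are the same.
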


\begin{proof}
The existence of a saddle point of global kernel is supported by Theorem \ref{exchange} below.
In Appendix \ref{app1}, we prove the $(\tilde{\pi}^*, \tilde{c}^*)$ and $P^*$, which satisfy conditions $(ii)$ and $(iii)$, constitute a saddle point of the objective function.
\end{proof}

Theorem \ref{thm_CRRA} shows that a saddle point of the global kernel can generate a saddle point of objective function, that is, a solution of the robust consumption-investment problem. Also, the value function is expressed by the value of $G$ at saddle point. In general, a robust stochastic control can be solved by an auxiliary deterministic minimax problem, which follows the deterministic-to-stochastic paradigm and can be named as the martingale method of robust optimization.

\vskip 15pt
\setcounter{equation}{0}
\section{CARA Utilities}
In this section, we consider a family of CARA utilities:
\[
U(x)= \frac{1}{-a}e^{-a x},
\]
whose domain is $(-\infty,+\infty)$ and parameter $a$ is positive.
Because the coefficient of absolute risk aversion of $U$ (cf. Pratt (1964)) is
constant, people usually regard the policy $\xi$ as a two-tuples $(\Pi, C)$, where $\Pi_t$ is the amount of investment in stocks and $C_t$ is the amount of consumption at time $t$, e.g., Karatzas et al. (1987), Vila and Zariphopoulou (1994), Liu (2004) and Chen et al. (2012).

Noticing CARA and CRRA utilities are both special cases of Hyperbolic absolute risk aversion (HARA) utilities, we want to use the martingale method as for CRRA utilities to solve the case of CARA utilities. For this purpose, we define a new kind of policy $\xi=(\Pi, D)=(\{\Pi_t\}_{t\in[0,T]}, \{D_t\}_{t\in[0,T]})$. Under this policy, the consumption amount per time unit is
$$
C_t^{(\Pi, D)} = D_t+ q_t W^{(\Pi, D)}_t,~\forall t\in[0,T],
$$
and the dynamic of wealth process $\{W_t^{(\Pi,D)}\}_{t\in[0,T]}$ is
\begin{equation}\label{sde of W_PiD}
\left\{
\begin{aligned}
dW^{(\Pi,D)}_t&=   \Pi_t dX_t - (D_t+ q_t W^{(\Pi, D)}_t) dt,\\
W^{(\Pi,D)}_0 &= w_0,
\end{aligned}
\right.
\end{equation}
where $~q_t = (T-t+1)^{-1}$ is a scale coefficient about time $t$.
According to Theorem \ref{thm_CRRA}, the quantity $q_t$ is the optimal ratio of consumption at time $t$ for an investor with logarithmic utility so that $D= C^{(\Pi, D)}-qW^{(\Pi, D)}$ is the excess amount of consumption compared to an investor with logarithmic utility (hereinafter referred to as the excess consumption).
Proposing excess consumption contributes to a direct expression of objective function by using the policy and measure, just like Lemma \ref{J=G}. An analogous martingale equality will be established in Lemma \ref{J=G_CARA} below.

Before this, we introduce the admissible policy set $\mathfrak{A}$, the deterministic admissible policy set $\mathfrak{A}_0$, the local kernels $\{h_t\}_{t\in[0,T]}$, and the global kernel $H$ for CARA utilities successively. The notations are same as the ones in CRRA case if there is no essential difference between them.
The admissible policy set is
\begin{equation}\label{def of A_CARA}
\begin{aligned}
\mathfrak{A} \triangleq \big\{&(\bar{\Pi}, \bar{D}) : [0,T]\times \Omega \rightarrow \R^d\times\R ~\big|
\text{ predictable}\big\},
\end{aligned}
\end{equation}
which has less constraints than CRRA case because the domain of CARA utility is $\R$. Indeed, the $\mathcal{O}_t$ defined in Section 3 corresponds to the $\R^d$ in \eqref{def of A_CARA}.
Similarly as Definitions \ref{def A0} and \ref{def genertae strategy}, the deterministic admissible policy set is
\begin{equation}\nonumber
\begin{aligned}
\mathfrak{A}_0 \triangleq \big\{&(\Pi, D): [0,T] \rightarrow \R^d\times \R |\text{ measurable}\big\},
\end{aligned}
\end{equation}
and the way of generating an admissible policy from a deterministic admissible policy does not change.
\begin{Def}\label{def g_CARA}
The local kernel (at time $t$) is $h_t$ defined by
\[
\!\!h_t^{(y, M, \mu)}(x)= x y \!  -\! \frac{1}{2}a q_t x^T M x + \!\! \int_{\R^d} \!\!\left(\frac{e^{-a q_t x^T z}}{-aq_t} +\frac{1}{aq_t} - x^T z \right)\mu(dz),
\]
for any $x\in\R^d$ and $(y, M, \mu)\in \Theta_t$.
\end{Def}

\begin{Def}\label{def G_CARA}
The global kernel $H$ (over period $[0,T]$) is a two-variable function on $\mathfrak{A}_0\times \Theta$. For any $(\Pi,D)\in \mathfrak{A}_0$ and $\theta\in\Theta$,
\begin{equation}\nonumber
\begin{aligned}
H((\Pi,D),\theta)\!\!=\!\!\int_0^T\!\!\!\! \exp\bigg(\!\! \int_0^t\!\!\!\! -aq_s(h_s^{\theta_s}(\Pi_s)\!\! -\!D_s) ds\bigg)\! \bigg(q_t(h_t^{\theta_t}(\Pi_t) \!\!-\! D_t) \!+\! U(D_t)\!\bigg) dt.
\end{aligned}
\end{equation}
\end{Def}
We adopt new notations for the local kernels and global kernel here because they have great differences from the CRRA case.
Precisely, $h_t$ is a function of $q_t$, that is, a function of remaining duration $T-t$. It can been seen that $H$ and $G$ have similar expressions, where the $-a$, $U(D_t)$, and $q_t(h_t^{\theta_t}(\Pi_t) \!\!-\! D_t)$ correspond to the $p$, $U(c_t)$, and $(g^{\theta_t}(\pi_t) \!-\!c_t)$, respectively. Therefore,
the relationship between the global kernel and the objective function for CARA utilities can be obtained by imitating the procedure in CRRA case with negative $p$.

\begin{Lemma}\label{J=G_CARA}
For any $(\bar{\Pi}, \bar{D}) \in \mathfrak{A}$, $P\in\mathfrak{P}$,
\begin{equation}\nonumber
\begin{aligned}
J((\bar{\Pi}, \bar{D}),P)&= \exp\left( \frac{-aw_0}{T\!+\!1}\right)\bigg(\frac{-1}{a}+ \!\!\int_\Omega H((\bar{\Pi},\bar{D}),\theta^P)Q^{(\bar{\Pi},\theta^P)}(d\omega)\bigg),
\end{aligned}
\end{equation}
where $Q^{(\bar{\Pi},\theta^P)}$ is an equivalent measure whose Radon-Nikodym derivative
is
\begin{equation}\nonumber
\begin{aligned}
\frac{dQ^{(\bar{\Pi},\theta^P)}}{dP}(T)&= \exp\left\{-\!\!\int_0^T \!\!aq_t\bar{\Pi}^T_t\sigma^P_t dB_t -\!\int_0^T  \!\frac{1}{2}a^2q_t^2\bar{\Pi}^T_t\Sigma^P_t \bar{\Pi}_tdt \right\}\\
&\times\exp\!\bigg\{\!\int_0^T\!\!\!\int_{\R^d}\!-aq_t\bar{\Pi}^T_tz\tilde{\nu}^P(dz,dt) \\ &+\!\int_0^T\!\!\!\int_{\R^d}\!(-aq_t\bar{\Pi}^T_tz + 1\!-\!e^{-aq_t\bar{\Pi}^T_tz})F^P_t(dz)dt\!\bigg\}.
\end{aligned}
\end{equation}
\end{Lemma}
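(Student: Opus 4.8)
The plan is to follow the deterministic-to-stochastic paradigm already used for Lemma~\ref{J=G} with $p\neq0$, the exponential utility now playing the role that the power function played there. First I would solve the linear wealth equation~\eqref{sde of W_PiD}. Because $q_t=(T-t+1)^{-1}$, the integrating factor is $\exp\{\int_0^t q_s\,\dif s\}=(T+1)/(T-t+1)$, and one gets
\[
W^{(\bar{\Pi},\bar{D})}_t=(T-t+1)\Big(\tfrac{w_0}{T+1}+\int_0^t q_s\big(\bar{\Pi}_s^{T}\,\dif X_s-\bar{D}_s\,\dif s\big)\Big).
\]
Thus the rescaled wealth $Z_t\triangleq q_tW^{(\bar{\Pi},\bar{D})}_t=\tfrac{w_0}{T+1}+\int_0^t q_s(\bar{\Pi}_s^{T}\dif X_s-\bar{D}_s\dif s)$ is driven by $X$ with coefficient $q_t\bar{\Pi}_t$, satisfies $Z_0=w_0/(T+1)$, and, since $q_T=1$, $Z_T=W^{(\bar{\Pi},\bar{D})}_T$. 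As the consumption amount is $C^{(\bar{\Pi},\bar{D})}_t=\bar{D}_t+Z_t$, the objective function is $J=\E^P[\int_0^T U(\bar{D}_t+Z_t)\,\dif t+U(Z_T)]$, and the whole proof reduces to a usable expression for $\me^{-aZ_t}$.

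Next I would apply It\^o's formula with jumps to $\me^{-aZ_t}$ (the truncation function being absent thanks to the boundedness of all jump supports, Assumption~\ref{ass_2.2}), using the representation~\eqref{sde of X} of $X$ under $P$. Collecting the Brownian term, the compensated-jump term (the two contributions involving $\tilde{\nu}^P$ combining to $\me^{-aZ_{s-}}(\me^{-aq_s\bar{\Pi}_s^{T}z}-1)$) and the drift, one obtains the linear SDE
\[
\me^{-aZ_t}=\me^{-aZ_0}+\int_0^t \me^{-aZ_{s-}}\,\dif N_s+\int_0^t \me^{-aZ_{s-}}\big(-aq_s\big(h_s^{\theta^P_s}(\bar{\Pi}_s)-\bar{D}_s\big)\big)\,\dif s,
\]
where $N_t=-\int_0^t aq_s\bar{\Pi}_s^{T}\sigma^P_s\,\dif B_s+\int_0^t\!\int_{\R^d}(\me^{-aq_s\bar{\Pi}_s^{T}z}-1)\,\tilde{\nu}^P(\dif z,\dif s)$ is a $P$-local martingale. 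The only nontrivial algebraic step is the identification of the drift: substituting Definition~\ref{def g_CARA} shows $-aq_sh_s^{\theta^P_s}(\bar{\Pi}_s)=-aq_s\bar{\Pi}_s^{T}b^P_s+\tfrac12 a^2q_s^2\bar{\Pi}_s^{T}\Sigma^P_s\bar{\Pi}_s+\int(\me^{-aq_s\bar{\Pi}_s^{T}z}-1+aq_s\bar{\Pi}_s^{T}z)F^P_s(\dif z)$, which is precisely the It\^o drift of $\me^{-aZ}$ divided by $\me^{-aZ_{s-}}$ once the compensated-jump part has been split off. Solving the linear SDE gives $\me^{-aZ_t}=\me^{-aZ_0}\,\mathcal{E}(N)_t\,\phi_t$ with $\phi_t\triangleq\exp\{\int_0^t -aq_s(h_s^{\theta^P_s}(\bar{\Pi}_s)-\bar{D}_s)\,\dif s\}$ and $\mathcal{E}(N)$ the Dol\'eans--Dade exponential of $N$; writing $\mathcal{E}(N)$ out explicitly (separating the continuous and purely discontinuous parts and re-compensating) identifies $\mathcal{E}(N)_T$ with the density $\tfrac{\dif Q^{(\bar{\Pi},\theta^P)}}{\dif P}(T)$ in the statement.

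Finally I would substitute $\me^{-aZ_t}=\me^{-aZ_0}\mathcal{E}(N)_t\phi_t$ into $J$, using $U(\bar{D}_t+Z_t)=\me^{-aZ_0}U(\bar{D}_t)\mathcal{E}(N)_t\phi_t$ and $U(Z_T)=\tfrac{1}{-a}\me^{-aZ_0}\mathcal{E}(N)_T\phi_T$, and factor out the constant $\me^{-aZ_0}=\exp\{-aw_0/(T+1)\}$. Since $\phi$ and $\bar{D}$ are predictable and $\mathcal{E}(N)$ is a $P$-martingale with $\mathcal{E}(N)_0=1$, Fubini together with the tower property lets me replace $\mathcal{E}(N)_t$ by $\mathcal{E}(N)_T$ inside the running-utility integral; for the terminal term I would use the elementary identity $\phi_T=1-a\int_0^T q_t(h_t^{\theta^P_t}(\bar{\Pi}_t)-\bar{D}_t)\phi_t\,\dif t$ and $\E^P[\mathcal{E}(N)_T]=1$ to rewrite $\tfrac{-1}{a}\E^P[\mathcal{E}(N)_T\phi_T]=\tfrac{-1}{a}+\E^P[\mathcal{E}(N)_T\int_0^T q_t(h_t^{\theta^P_t}(\bar{\Pi}_t)-\bar{D}_t)\phi_t\,\dif t]$. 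Adding the two pieces recovers $\int_0^T\phi_t(q_t(h_t^{\theta^P_t}(\bar{\Pi}_t)-\bar{D}_t)+U(\bar{D}_t))\,\dif t=H((\bar{\Pi},\bar{D}),\theta^P)$ under the $P$-expectation against $\mathcal{E}(N)_T$, and rewriting that expectation as a $Q^{(\bar{\Pi},\theta^P)}$-expectation yields the stated formula. I expect the main obstacle to be purely analytic: verifying that $\mathcal{E}(N)$ is a genuine $P$-martingale (not merely a strict supermartingale) and that the integrability needed for the applications of Fubini holds. This is harder than in Lemma~\ref{J=G} because in the CARA model $\bar{\Pi}$ ranges over all of $\R^d$ instead of over the bounded sets $\mathcal{O}_t$; I would handle it by a localization argument along stopping times that reduce $N$, together with the boundedness of the jump supports (Assumption~\ref{ass_2.2}) and the fact that $U\le 0$ makes every $P$-expectation well defined in $[-\infty,0)$, exactly as in the proof of Lemma~\ref{J=G}.
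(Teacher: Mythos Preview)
Your proposal is correct and follows essentially the same route as the paper: compute the rescaled wealth $q_tW_t^{(\bar\Pi,\bar D)}$, factor $\exp(-aq_tW_t)$ as $e^{-aq_0w_0}$ times a density process times the drift factor $\phi_t$, substitute into $J$, change to the measure $Q^{(\bar\Pi,\theta^P)}$, and use the Newton--Leibniz identity $\phi_T=1+\int_0^T\phi_t(-aq_t)(h_t-\bar D_t)\,\dif t$ to recombine the running and terminal pieces into $H$. The only cosmetic difference is that the paper obtains the factorization by directly exponentiating the integral expression for $q_tW_t$ and splitting the exponent algebraically, whereas you pass through the linear SDE for $e^{-aZ_t}$ and its Dol\'eans--Dade solution; both yield the same product decomposition. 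Your explicit flag that $\mathcal{E}(N)$ must be a true $P$-martingale (and your proposed localization) is a point the paper leaves implicit.
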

\begin{proof}
See Appendix \ref{app1}.
\end{proof}

\begin{Assumption}\label{ass_CARA}
For any $t\in[0,T]$,
\[
(b_t, \Sigma_t, F_t)\in \Theta_t ~\Rightarrow~ b_t^T\Sigma_t^{-1}b_t \leq  2q_t(1-\log(q_t)).
\]
\end{Assumption}
Just as Assumption \ref{ass_CRRA} in the CRRA case, we demand Assumption \ref{ass_CARA} to support the main result -- Theorem \ref{thm_CARA} for CARA utilities. Assumptions \ref{ass_CRRA} and \ref{ass_CARA} both give restrictions on the Sharpe ratio but still have difference.
The constraints in Assumption \ref{ass_CRRA} vary in parameter $p$ but the constraints in Assumption \ref{ass_CARA} are diverse in time instead of parameter $a$.
\begin{figure}[htbp]
	\centering
	\includegraphics[width=0.8\textwidth]{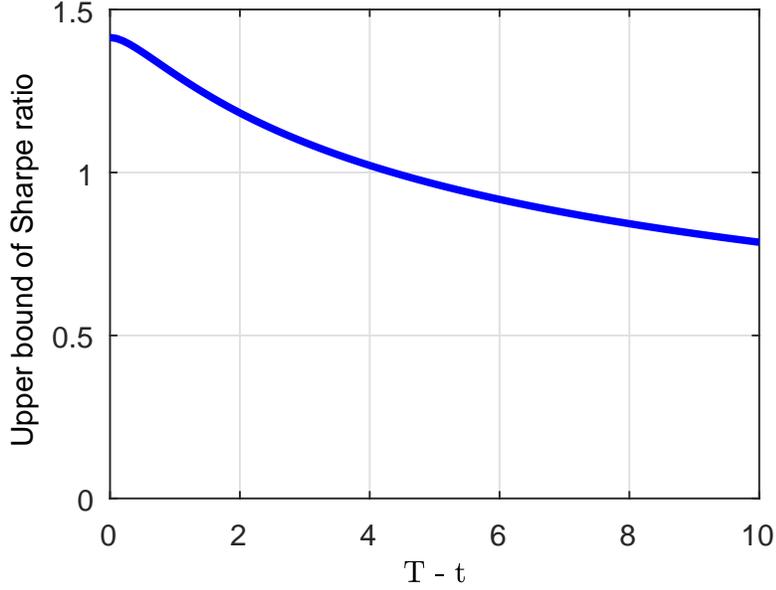}
	\caption{The upper bound of Sharpe ratio for different remaining duration $T-t$.}
	\label{fig cara}
\end{figure}
In Figure \ref{fig cara}, the upper bounds for every remaining durations  are depicted. It is obvious that the upper bound is decreasing with the increasing of $T-t$. When $T-t=5~(10)$, the upper bound of Sharpe ratio is about $0.96~(0.786)$ and thus the probability of a negative yield is about $16.9\%~(21.5\%)$ after a year of investment. This constraint is stricter than the condition in CRRA case but still tolerable enough for the vast majority of investment periods. Finally, we conclude this section by giving an optimal policy and a worst-case measure in the following theorem.

\begin{theorem}\label{thm_CARA}
There exist an optimal policy and a worst-case model of the robust consumption-investment problem, denoted by $(\tilde{\Pi}^*, \tilde{D}^*)$ and $P^*$. Moreover,
\begin{enumerate}[(i)]
\item $((\tilde{\Pi}^*, \tilde{D}^*), P^*)$ is a saddle point of objective function $J$;
\item $(\tilde{\Pi}^*, \tilde{D}^*)$ is the generated policy from $({\Pi}^*, {D}^*)\in\mathfrak{A}_0$;
\item $P^*$ is the possible PII measure whose DC triplet equals $\theta^*$;
\item $(({\Pi}^*, {D}^*), \theta^*)$ is a saddle point of global kernel $H$;
\item the value function is
\begin{equation}\nonumber
\begin{aligned}
u(w_0)= \exp\left( \frac{-aw_0}{T+1}\right)\left(-\frac{1}{a}+ H((\Pi^*,D^*),\theta^*)\right).
\end{aligned}
\end{equation}
\end{enumerate}
\end{theorem}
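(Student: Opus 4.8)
The plan is to transplant the argument behind Theorem~\ref{thm_CRRA} to the CARA setting, with Lemma~\ref{J=G_CARA} serving as the bridge between the objective $J$ and the global kernel $H$. First I would invoke the CARA counterpart of Theorem~\ref{exchange} from Section~5 (which relies on Assumption~\ref{ass_CARA}) to obtain a measurable saddle point $(({\Pi}^*, {D}^*), \theta^*)\in\mathfrak{A}_0\times\Theta$ of $H$, that is, $H((\Pi,D),\theta^*)\le H(({\Pi}^*,{D}^*),\theta^*)\le H(({\Pi}^*,{D}^*),\theta)$ for all $(\Pi,D)\in\mathfrak{A}_0$ and all $\theta\in\Theta$. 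I then let $P^*$ be the possible PII measure whose DC triplet is $\theta^*$; it exists and is unique thanks to the one-to-one correspondence \eqref{def of P0} between $\mathfrak{P}_0$ and $\Theta$. Finally I let $(\tilde\Pi^*,\tilde D^*)\in\mathfrak{A}$ be the policy generated from $({\Pi}^*,{D}^*)$. This yields (ii), (iii) and (iv) directly, and it remains to establish (i) and (v).

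For the supremum half of the saddle point, fix an arbitrary $(\bar\Pi,\bar D)\in\mathfrak{A}$. Since $\theta^{P^*}\equiv\theta^*$ is deterministic, Lemma~\ref{J=G_CARA} gives $J((\bar\Pi,\bar D),P^*)=\exp(-aw_0/(T+1))\bigl(-1/a+\int_\Omega H((\bar\Pi,\bar D),\theta^*)\,Q^{(\bar\Pi,\theta^*)}(d\omega)\bigr)$. For $\mathfrak{P}$-q.s.\ $\omega$, the time-slice $(\bar\Pi_\cdot(\omega),\bar D_\cdot(\omega))$ is a measurable map $[0,T]\to\R^d\times\R$ and hence belongs to $\mathfrak{A}_0$ (measurability is the only requirement in the CARA set $\mathfrak{A}_0$), so the left saddle inequality yields $H((\bar\Pi(\omega),\bar D(\omega)),\theta^*)\le H(({\Pi}^*,{D}^*),\theta^*)$ for such $\omega$. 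Integrating against the probability measure $Q^{(\bar\Pi,\theta^*)}$, we obtain $J((\bar\Pi,\bar D),P^*)\le\exp(-aw_0/(T+1))\bigl(-1/a+H(({\Pi}^*,{D}^*),\theta^*)\bigr)$; applying Lemma~\ref{J=G_CARA} once more with $(\bar\Pi,\bar D)=(\tilde\Pi^*,\tilde D^*)$ shows the right-hand side equals $J((\tilde\Pi^*,\tilde D^*),P^*)$, so $J((\bar\Pi,\bar D),P^*)\le J((\tilde\Pi^*,\tilde D^*),P^*)$.

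For the infimum half, fix an arbitrary $P\in\mathfrak{P}$ and apply Lemma~\ref{J=G_CARA} with the policy $(\tilde\Pi^*,\tilde D^*)$: $J((\tilde\Pi^*,\tilde D^*),P)=\exp(-aw_0/(T+1))\bigl(-1/a+\int_\Omega H(({\Pi}^*,{D}^*),\theta^P)\,Q^{(\tilde\Pi^*,\theta^P)}(d\omega)\bigr)$. By Definition~\ref{def uncertainty set} and the convention on $\Theta$, for $P$-a.e.\ $\omega$ the time-slice $\theta^P_\cdot(\omega)$ is a Borel selector of the correspondence $\Theta$, hence an element of the set $\Theta$, so the right saddle inequality gives $H(({\Pi}^*,{D}^*),\theta^P(\omega))\ge H(({\Pi}^*,{D}^*),\theta^*)$. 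Integrating against the probability measure $Q^{(\tilde\Pi^*,\theta^P)}$ produces $J((\tilde\Pi^*,\tilde D^*),P)\ge\exp(-aw_0/(T+1))\bigl(-1/a+H(({\Pi}^*,{D}^*),\theta^*)\bigr)=J((\tilde\Pi^*,\tilde D^*),P^*)$. Combining the two halves shows $((\tilde\Pi^*,\tilde D^*),P^*)$ is a saddle point of $J$, which is (i); since the minimax value coincides with the value at a saddle point, $u(w_0)=J((\tilde\Pi^*,\tilde D^*),P^*)$, and the displayed identity gives (v).

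The real difficulty sits one step upstream of this transfer argument, namely in producing the measurable saddle point $(({\Pi}^*,{D}^*),\theta^*)$ of $H$: this is precisely what Section~5 supplies, and it genuinely needs Assumption~\ref{ass_CARA} (the time-dependent Sharpe-ratio bound precluding over-consumption) together with the compactness and continuity secured by the restriction $\varepsilon\in(0,2]$, as well as the measurable-selection arguments for the worst-case DC triplet. Within the proof above, the only points requiring care are measure-theoretic bookkeeping already contained in the proof of Lemma~\ref{J=G_CARA}: that $\omega\mapsto H((\bar\Pi(\omega),\bar D(\omega)),\theta^*)$ and $\omega\mapsto H(({\Pi}^*,{D}^*),\theta^P(\omega))$ are measurable, and that the exponential defining $dQ^{(\cdot,\cdot)}/dP$ is a true martingale so that each $Q^{(\cdot,\cdot)}$ is a probability measure. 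Granting these, the theorem reduces to the pointwise-in-$\omega$ saddle inequalities used above.
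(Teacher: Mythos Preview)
Your proposal is correct and follows essentially the same approach as the paper: invoke Theorem~\ref{exchange CARA} for the saddle point of $H$, then transfer to $J$ via Lemma~\ref{J=G_CARA} exactly as the proof of Theorem~\ref{thm_CRRA} does for $G$ via Lemma~\ref{J=G}. Your direct verification of the two saddle inequalities for $J$ at $((\tilde\Pi^*,\tilde D^*),P^*)$ is in fact a slight streamlining of the paper's route, which (in the proof of Theorem~\ref{thm_CRRA}) instead computes $\sup\inf J$ and $\inf\sup J$ separately through the chains \eqref{log_1}--\eqref{log_3}; but the underlying ingredients---pointwise membership $(\bar\Pi(\omega),\bar D(\omega))\in\mathfrak{A}_0$, pointwise membership $\theta^P(\omega)\in\Theta$, and determinism of $\theta^{P^*}$ for $P^*\in\mathfrak{P}_0$---are identical.
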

\begin{proof}
$(({\Pi}^*, {D}^*), \theta^*)$ exists by Theorem \ref{exchange CARA} below, and it is easy to verify $((\tilde{\Pi}^*, \tilde{D}^*), P^*)$ is a saddle point of $J$ by referring the proof of Theorem \ref{thm_CRRA}, while using Lemma \ref{J=G_CARA}.
\end{proof}

An optimal excess consumption $\tilde{D}^*$ is given in Theorem \ref{thm_CARA}, so an optimal amount of consumption for a CARA investor at time $t$ is
\[
(T-t+1)^{-1}\left(W^{(\tilde{\Pi}^*,\tilde{D}^*)}_t+ \tilde{D}^*_t\right).
\]
In kernel analyses, we will know $\tilde{D}^*$ is a non-negative function thus
a CARA investor will consume more than a logarithmic investor if they have the same amount of wealth. Meanwhile, more consumption amount may make the wealth negative, which is acceptable for a CARA investor but intolerable for a CRRA investor.

\vskip 15pt
\setcounter{equation}{0}
\section{Kernel Analyses}
In this section, we divide the policy into two components, an investment policy and a consumption policy, for studying the relationship between global kernel and local kernels. Therefore, the global kernel can be regarded as a function with three arguments: an investment policy, a consumption policy, and a PII triplet. Meanwhile, we substitute the expressions $G(\pi,c,\theta)$ and $H(\Pi, D, \theta)$ for $G((\pi,c),\theta)$ and $H((\Pi, D), \theta)$, respectively, to show $G$ and $H$ have three arguments.

The main contribution of this section is finding a saddle point of the global kernel, which can generate a saddle point of the objective function, i.e.,
an optimal policy and a worst-case measure of the robust consumption-investment problem (see Theorems \ref{thm_CRRA} and \ref{thm_CARA}). In Definitions \ref{def G_power} and \ref{def G_CARA},
the global kernels $G$ and $H$ have similar expressions, so the procedures of finding their saddle points are similar. The difficulty comes from the fact that the global kernel is a functional defined on infinite dimensional spaces, and this difficulty is inherent and cannot be avoided because it is caused by time-varying confidence sets and intertemporal consumption.
There are three steps in finding a saddle point. By proving the measurable saddle point theorem, we firstly find a deterministic admissible investment policy and a PII triplet, whose values at each time $t$ constitute a saddle point of the local kernel (cf. Theorems \ref{measurable selector} and \ref{measurable selector CARA}).
Then we obtain the candidate policy and PII triple by defining an appropriate consumption policy. Finally, we demonstrate that the chosen policy and PII triple constitute a saddle point of the global kernel (cf. Theorems \ref{exchange} and \ref{exchange CARA}).

\subsection{Kernel of CRRA}
We implement the steps described above for CRRA utilities under Assumptions \ref{ass_2.1}, \ref{ass_2.2}, and \ref{ass_CRRA}.
In order to regard the global kernel as a function with three variables, we rewrite its domain as
\[
\mathfrak{A}^\pi_0\times \mathfrak{A}^c_0 \times \Theta,
\]
where
\begin{equation}\nonumber
\begin{aligned}
\mathfrak{A}^\pi_0  \triangleq \big\{&\pi: [0,T] \rightarrow \R^d ~\big|
\text{ measurable}, \pi_t\in \mathcal{O}_t, \forall t\in[0,T] \big\},
\end{aligned}
\end{equation}
and
\begin{equation}\nonumber
\begin{aligned}
\mathfrak{A}^c_0  \triangleq \big\{&c : [0,T] \rightarrow \R_+
\big|
~\mathrm{measurable}
\big\}.
\end{aligned}
\end{equation}
Actually, $\mathfrak{A}_0$ in Definition \ref{def A0} is the Cartesian product of $\mathfrak{A}^\pi_0$ and $\mathfrak{A}^c_0$.

\begin{Def}\label{def monotonous}
	For any $(\pi, \theta),~(\hat{\pi}, \hat{\theta})\in \mathfrak{A}^\pi_0\times \Theta$, we say $(\pi, \theta) \succcurlyeq(\hat{\pi}, \hat{\theta})$ if and only if
	\[
	g_t^{\theta_t}(\pi_t) \geq g_t^{\hat{\theta}_t}(\hat{\pi}_t), \forall t\in[0,T].
	\]
	We call $(\pi, \theta)$ is greater than $(\hat{\pi}, \hat{\theta})$ in the kernel order.
\end{Def}

\begin{theorem}\label{g_monotony}
	For any $c \in \mathfrak{A}^c_0$ and $(\pi, \theta),(\hat{\pi}, \hat{\theta}) \in \mathfrak{A}^\pi_0\times\Theta$,
\[
(\pi, \theta)\succcurlyeq(\hat{\pi}, \hat{\theta}) ~ \Rightarrow ~G(\pi,c,\theta)\geq G(\hat{\pi},c,\hat{\theta}),
\]
\begin{enumerate}[(i)]
	\item for $p\leq0$;
	\item for $p\in(0,1)$, if
	$
	0\leq c_t\leq 1, g^{\theta_t}(\pi_t), g^{\hat{\theta}_t}(\hat{\pi}_t)\geq0, \forall t\in[0,T].
	$
\end{enumerate}
\end{theorem}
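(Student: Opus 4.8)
The plan is to rewrite $G$ so that its dependence on the profile $t\mapsto g_t^{\theta_t}(\pi_t)$ becomes manifestly order-preserving, everything else being held fixed. Set $\Gamma_t\triangleq g_t^{\theta_t}(\pi_t)$ and $\hat\Gamma_t\triangleq g_t^{\hat\theta_t}(\hat\pi_t)$; the hypothesis $(\pi,\theta)\succcurlyeq(\hat\pi,\hat\theta)$ says exactly that $\Gamma_t\ge\hat\Gamma_t$ for every $t$, hence $\int_0^t(\Gamma_s-c_s)\,ds\ge\int_0^t(\hat\Gamma_s-c_s)\,ds$ for every $t$ (the consumption profile $c$ is common to both sides). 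For $p=0$ the claim is immediate from Definition \ref{def G_power}: each value $\Gamma_t$ enters the integrand only through $\int_0^t(\Gamma_s-c_s)\,ds$ and through the additive term $\Gamma_t-c_t$, both with a plus sign, so Fubini gives
\begin{equation}\nonumber
G(\pi,c,\theta)-G(\hat\pi,c,\hat\theta)=\int_0^T(T-t+1)(\Gamma_t-\hat\Gamma_t)\,dt\ \ge\ 0 .
\end{equation}

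For $p\ne0$ I would use an integration by parts. With $\phi(t)\triangleq p\int_0^t(\Gamma_s-c_s)\,ds$ one has $\frac{d}{dt}e^{\phi(t)}=p(\Gamma_t-c_t)e^{\phi(t)}$, so $\int_0^T e^{\phi(t)}(\Gamma_t-c_t)\,dt=\frac1p(e^{\phi(T)}-1)$ and Definition \ref{def G_power} rewrites as
\begin{equation}\nonumber
G(\pi,c,\theta)=\frac1p\bigl(e^{\phi(T)}-1\bigr)+\int_0^T e^{\phi(t)}\,U(c_t)\,dt ,
\end{equation}
with the analogous formula for $(\hat\pi,c,\hat\theta)$ in terms of $\hat\phi(t)\triangleq p\int_0^t(\hat\Gamma_s-c_s)\,ds$. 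Since $x\mapsto\frac1p(e^{px}-1)$ is increasing and $\int_0^T(\Gamma_s-c_s)\,ds\ge\int_0^T(\hat\Gamma_s-c_s)\,ds$, the leading terms already obey the required inequality. For the integral terms: when $p<0$, $U(c_t)=c_t^{\,p}/p\le0$ and $\phi(t)\le\hat\phi(t)$ (multiply the cumulative inequality by $p<0$), so $e^{\phi(t)}\le e^{\hat\phi(t)}$; multiplying through by the non-positive factor $U(c_t)$ reverses this, and integrating gives $\int_0^T e^{\phi(t)}U(c_t)\,dt\ge\int_0^T e^{\hat\phi(t)}U(c_t)\,dt$. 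When $p\in(0,1)$, $U(c_t)=c_t^{\,p}/p\ge0$ and, now with $p>0$, $\phi(t)\ge\hat\phi(t)$, so $e^{\phi(t)}U(c_t)\ge e^{\hat\phi(t)}U(c_t)$ directly; integrating gives the same inequality. Adding the two pieces yields $G(\pi,c,\theta)\ge G(\hat\pi,c,\hat\theta)$ in both cases.

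The hard part is not this inequality chain, which is elementary, but the measure-theoretic bookkeeping that makes the manipulations legitimate, since $g_t^{\theta_t}(\pi_t)$ can be $-\infty$ (for $p<0$, as $\pi_t$ approaches $\partial\mathcal{O}_t$, by Definition \ref{def g CRRA}) and $c\in\mathfrak{A}^c_0$ need not be integrable. I would first dispose of the degenerate profiles: if $\{t:\Gamma_t=-\infty\}$ has positive Lebesgue measure, then $\hat\Gamma_t=-\infty$ there too, so $G(\pi,c,\theta)=G(\hat\pi,c,\hat\theta)=-\infty$ and the inequality is trivial; the same conclusion holds whenever the cumulative $\int_0^t(\Gamma_s-c_s)\,ds$ is $-\infty$ on a set of positive measure. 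In the remaining regime the upper tail costs nothing: the jump integral in Definition \ref{def g CRRA} is $\le0$ by concavity of $U$ (with $U'(1)=1$), so $g_t^{(y,M,\mu)}(x)\le\frac1{2(1-p)}y^T M^{-1}y$, which Assumption \ref{ass_CRRA} bounds by $\frac{1-p}{-p}$ uniformly in $t$ when $p<0$, and which compactness of $\mathcal{C}$ together with the $\kappa_t$-boundedness of $\mathcal{O}_t$ bounds when $p\ge0$; hence $\phi$ and $\hat\phi$ are bounded above, $e^{\phi}$ is locally integrable, the fundamental theorem of calculus applies, and the negative parts, if present, drive both sides to $-\infty$ simultaneously. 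For $p\in(0,1)$ the extra hypotheses $0\le c_t\le1$ and $g_t^{\theta_t}(\pi_t),g_t^{\hat\theta_t}(\hat\pi_t)\ge0$ force $c$ bounded and, since $c_t^{\,p}\ge c_t$ and $1/p>1$ on $[0,1]$, force $g_t^{\theta_t}(\pi_t)-c_t+U(c_t)\ge g_t^{\theta_t}(\pi_t)\ge0$, so that $G$ is a well-defined element of $[0,\infty]$ and every step above is valid \emph{a fortiori} --- which is precisely why those hypotheses appear in case (ii).
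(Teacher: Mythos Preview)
Your argument is correct and follows a genuinely different route from the paper's. The paper proves monotonicity by a variational interpolation: writing $g^\delta_t=g^{\hat\theta_t}(\hat\pi_t)+\delta\bigl(g^{\theta_t}(\pi_t)-g^{\hat\theta_t}(\hat\pi_t)\bigr)$ and differentiating the \emph{integrand} of $G$ in $\delta$, it shows the integrand at each fixed $t$ is nondecreasing in $\delta$. The sign of this derivative hinges on the sign of $g^\delta_t-c_t+U(c_t)$: for $p<0$ the paper forces this to be $\le0$ via Assumption~\ref{ass_CRRA} (which caps $g$ by $(1-p)/(-p)$), and for $p\in(0,1)$ it forces it to be $\ge0$ via the extra hypotheses $c_t\le1$, $g\ge0$. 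Your integration-by-parts identity
\[
G(\pi,c,\theta)=\tfrac{1}{p}\bigl(e^{\phi(T)}-1\bigr)+\int_0^T e^{\phi(t)}U(c_t)\,dt
\]
bypasses this entirely: monotonicity of the first summand uses only that $x\mapsto\tfrac1p(e^{px}-1)$ is increasing, and monotonicity of the second uses only the (correlated) signs of $p$ and of $U(c_t)=c_t^p/p$. Your core inequality therefore needs neither Assumption~\ref{ass_CRRA} nor the case~(ii) hypotheses, and you rightly identify both as playing only an integrability role in your scheme. What the paper's approach buys is robustness: because it proves the integrand itself is ordered pointwise in $t$, the inequality $G(\pi,c,\theta)\ge G(\hat\pi,c,\hat\theta)$ passes to $[-\infty,+\infty]$-valued integrals without any appeal to the fundamental theorem of calculus, whereas your split requires $\phi$ absolutely continuous and $e^\phi\cdot U(c)$ to make sense term by term. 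Your final paragraph addresses this, but the paper's pointwise argument simply sidesteps the issue.
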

\begin{proof}
	See Appendix \ref{app monotony and conclusion}.
\end{proof}

A partial order called kernel order is presented in Definition \ref{def monotonous}, under which we can compare elements in $\mathfrak{A}^\pi_0\times\Theta$. Based on the kernel order, Theorem \ref{g_monotony} reveals the monotonicity of $G$ over $\mathfrak{A}^\pi_0\times\Theta$. It is worth mentioning that the conditions in Theorem \ref{g_monotony} for $p\leq0$ and $p\in(0,1)$ are different because we use different techniques to overcome the difficulties encountered in the proof. Generally speaking, Assumption \ref{ass_CRRA} overcomes the difficulty of non-positive $p$, so we do not need to add more assumptions for the case of $p\leq0$. For $p\in(0,1)$, we overcome the difficulty by verifying the monotonicity with a stronger condition, that is, in a small domain of $G$ (see Eq. \eqref{rest of control} in Appendix \ref{app monotony and conclusion}). Fortunately, this small domain is large enough since we can verify that any optimal policy is in this domain, that is, the optimal policy satisfies the condition for $p\in(0,1)$ in Theorem \ref{g_monotony} (cf. the proof of Theorem \ref{exchange}).

Thanks to the monotonicity of global kernel, an immediate idea of constituting a saddle point of $G$ is to piece together the saddle points of local kernels from time $0$ to $T$. The existence of saddle points of local kernels has been studied by Neufeld and Nutz (2018), however, it is not sure whether the joint combination of saddle points of local kernels is measurable with respect to time. Following theorem answers this question mainly by using Kuratowski-Ryll-Nardzewski measurable selection theorem and measurable maximum theorem repeatedly (cf. Appendix \ref{app measurable selector}). Theorem \ref{measurable selector} is actually a measurable saddle point theorem.

\begin{theorem}\label{measurable selector}
There exist a $\pi^*\in\mathfrak{A}^\pi_0$ and a $\theta^*\in\Theta$ such that $(\pi^*_t, \theta^*_t)$ is a saddle point of $g$ on $ \mathcal{O}_t\times \Theta_t$ for every $t\in[0,T]$.
\end{theorem}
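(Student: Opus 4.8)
The plan is to obtain, for each fixed $t$, a saddle point of the local kernel $g$ on $\mathcal{O}_t\times\Theta_t$, and then to patch these pointwise saddle points into selections $\pi^*$ and $\theta^*$ that are measurable in $t$. The second step --- the measurable selection --- is where all the difficulty created by time-varying confidence sets is concentrated, and the main tools will be the measurable maximum theorem and the Kuratowski-Ryll-Nardzewski selection theorem.

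\emph{Pointwise existence.} Fix $t$ and write $\theta=(y,M,\mu)$. The map $x\mapsto g^\theta(x)$ is strictly concave on $\mathcal{O}_t$: the term $-\tfrac{1-p}{2}x^TMx$ is negative definite since $1-p>0$ and $M\in\mathbb{S}_+^d$, and $x\mapsto\int_{\R^d}(U(1+x^Tz)-U(1)-x^Tz)\mu(dz)$ is concave because $U$ is concave. Symmetrically, $\theta\mapsto g^\theta(x)$ is affine. Using the convexity of $\Theta_t$ (Assumption~\ref{ass_2.1}) and its compactness (a closed subset of the compact $\mathcal{C}$), and after verifying that $g$ is continuous on $\mathcal{O}_t\times\Theta_t$ --- the only non-routine point being that $\mu\mapsto\int_{\R^d}(U(1+x^Tz)-U(1)-x^Tz)\mu(dz)$ is continuous for $d_\mathcal{L}^\varepsilon$, which follows from Assumption~\ref{ass_2.2} since $z\mapsto(U(1+x^Tz)-U(1)-x^Tz)/(|z|^{2-\varepsilon}\wedge 1)$ is then a bounded $(\varepsilon\wedge1)$-H\"{o}lder function on $\mathbf{S}_t$ --- one gets, as in Neufeld and Nutz (2018), the minimax identity $\sup_{x\in\mathcal{O}_t}\inf_{\theta\in\Theta_t}g^\theta(x)=\inf_{\theta\in\Theta_t}\sup_{x\in\mathcal{O}_t}g^\theta(x)=:v(t)$ with both extrema attained, and hence a saddle point. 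One must also check that the $x$-maximiser stays in the open set $\mathcal{O}_t$ rather than on $\partial\mathcal{O}_t$; this is where the boundary behaviour of $U$ and Assumption~\ref{ass_CRRA} enter.

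\emph{Measurable selection.} Strict concavity in $x$ makes the maximiser unique, so $W(t,\theta):=\max_{x\in\mathcal{O}_t}g^\theta(x)$ is a genuine maximum, and Berge's theorem makes $\theta\mapsto W(t,\theta)$ continuous for each $t$. The structural point is that $(t,\theta,x)\mapsto g^\theta(x)$ carries no actual $t$-dependence (Definition~\ref{def g CRRA}), so it is a Carath\'{e}odory function; granting that $t\mapsto\Theta_t$ (weakly measurable, compact-valued) and $t\mapsto\overline{\mathcal{O}}_t$ are measurable compact-valued correspondences, the measurable maximum theorem (Aliprantis and Border (2006), Theorem 18.19) shows $W$ is Carath\'{e}odory and $v(t)=\min_{\theta\in\Theta_t}W(t,\theta)$ is measurable, with $t\mapsto Y_t^*:=\{\theta\in\Theta_t:W(t,\theta)=v(t)\}$ measurable, nonempty (by the pointwise step) and compact-valued. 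Kuratowski-Ryll-Nardzewski then gives a measurable selector $\theta^*$ of $Y_\cdot^*$. A second application of the measurable maximum theorem, now to the Carath\'{e}odory function $(t,x)\mapsto g^{\theta^*_t}(x)$, produces the single-valued (hence measurable) best response $\pi^*_t:=\arg\max_{x\in\mathcal{O}_t}g^{\theta^*_t}(x)$, so $\pi^*\in\mathfrak{A}^\pi_0$ and $\theta^*\in\Theta$. Finally $(\pi^*_t,\theta^*_t)$ is a saddle point of $g$ on $\mathcal{O}_t\times\Theta_t$: since $\theta^*_t$ minimises $\theta\mapsto\sup_x g^\theta(x)$, the minimax identity yields $g^{\theta^*_t}(\pi^*_t)=W(t,\theta^*_t)=v(t)$, hence $g^{\theta^*_t}(x)\le g^{\theta^*_t}(\pi^*_t)$ for all $x\in\mathcal{O}_t$; and since the saddle points of a concave-convex function form a rectangle, the $x$-coordinate of any pointwise saddle point at time $t$ belongs to $\arg\max_x g^{\theta^*_t}(x)=\{\pi^*_t\}$, whence $g^\theta(\pi^*_t)\ge g^{\theta^*_t}(\pi^*_t)$ for all $\theta\in\Theta_t$.

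\emph{Main obstacle.} The delicate part is the measurability (and good behaviour) of the domain correspondence $t\mapsto\mathcal{O}_t$. Since $\mathcal{O}_t$ is governed by $\mathbf{S}_t=\bigcup_{(y,M,\mu)\in\Theta_t}\mathrm{supp}(\mu)$, an uncountable union of supports of L\'{e}vy measures, even the closedness and boundedness of $\mathbf{S}_t$ need Assumption~\ref{ass_2.2}, while its measurability in $t$ --- a phenomenon absent when $\Theta$ is a constant correspondence --- has to be drawn out of the weak measurability of $\Theta$. This, together with the non-degeneracy constant $\kappa_t$ (which makes $\overline{\mathcal{O}}_t$ a bounded convex compact set) and the interior-maximiser argument above, is what legitimises the two applications of the measurable maximum theorem. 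I expect Appendix~\ref{app measurable selector} to consist essentially of this verification, plus the bookkeeping needed to run the measurable maximum and Kuratowski-Ryll-Nardzewski theorems on the extended-real-valued $W$, which equals $-\infty$ on the bad part of $\partial\mathcal{O}_t$ when $p\le 0$.
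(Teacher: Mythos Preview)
Your plan is essentially correct and tracks the paper's strategy: Sion's minimax theorem for pointwise existence, then the measurable maximum theorem and Kuratowski--Ryll-Nardzewski for the selection. Two differences are worth flagging.

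First, the paper constructs $\theta^*$ and $\pi^*$ \emph{independently}: Lemma~\ref{existence of theta} produces a measurable $\theta^*$ with $\theta^*_t\in\arg\min_{\Theta_t}\sup_{\mathcal{O}_t}g$, and Lemma~\ref{existence of pi} produces a measurable $\pi^*$ with $\pi^*_t\in\arg\max_{\mathcal{O}_t}\min_{\Theta_t}g$; Sion's identity then shows the pair is a saddle point. You instead build $\pi^*$ as the (unique, by strict concavity) best response to $\theta^*$. Both routes work; yours exploits the strict concavity coming from $M\in\mathbb{S}_+^d$ and avoids a second independent selection argument, while the paper's symmetric construction makes the interior-maximiser issue more transparent, since $\min_{\Theta_t}g^\theta(x)\to-\infty$ on $\partial\mathcal{O}_t$ directly by the closedness of $\mathbf{S}_t$.

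Second, a small misattribution: the interior-maximiser argument does \emph{not} use Assumption~\ref{ass_CRRA}. What forces the maximiser into $\mathcal{O}_t$ is the closedness of $\mathbf{S}_t$ in Assumption~\ref{ass_2.2}: for any $\check{x}\in\partial\mathcal{O}_t$ there is some $(\check{y},\check{M},\check{\mu})\in\Theta_t$ and $\check{z}\in\mathrm{supp}(\check{\mu})$ with $\check{x}^T\check{z}=-1$, so $\min_{\Theta_t}g^\theta(\check{x})=-\infty$ (for $p\le0$; for $p\in(0,1)$ one checks the derivative blows up). Assumption~\ref{ass_CRRA} is used only later, in the monotonicity and saddle-point arguments for $G$.

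Finally, the paper handles the open, non-compact domain $\mathcal{O}_t$ by the exhaustion $\mathcal{O}_t^n=\{x:x^Tz\ge-1+\tfrac{1}{n},\ \forall z\in\mathbf{S}_t\}$ and takes limits (this is exactly the bookkeeping you anticipate in your last paragraph); the continuity lemma behind everything is Lemma~\ref{continuous of I}, which shows the integrand $(U(1+x^Tz)-U(1)-x^Tz)/(|z|^{2-\varepsilon}\wedge1)$ is bounded and $(\varepsilon\wedge1)$-H\"older on $\mathbf{S}_t\times\mathcal{O}_t^n$, and this is precisely where $\varepsilon>0$ is needed.
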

\begin{proof}
Using Sion's minimax theorem (cf. Theorem $4.2^\prime_{}$, Sion (1958)) and referring to the Section 3 of Neufeld and Nutz (2018), we have
\begin{equation}\nonumber
\inf_{\theta_t\in\Theta_t} \sup_{\pi_t\in\mathcal{O}_t}\{ g^{\theta_t}(\pi_t)\} = \sup_{\pi_t\in\mathcal{O}_t}\inf_{\theta_t\in\Theta_t} \{ g^{\theta_t}(\pi_t)\},
\end{equation}
due to the compactness of $\Theta_t$.
Therefore, the $\theta^*$ chosen in Lemma \ref{existence of theta} and the $\pi^*$ in Lemma \ref{existence of pi} satisfy that $\pi^*_t$ and $\theta^*_t$ constitute a saddle point of $g$ on $ \mathcal{O}_t\times \Theta_t$ for every $t\in[0,T]$. Details are in Appendix \ref{app measurable selector}.
\end{proof}

For using the Kuratowski-Ryll-Nardzewski measurable selection theorem to find a measurable saddle point, it is crucial to prove $\sup\limits_{x\in\mathcal{O}^n_t} g_t^{\cdot}(x)$ and $\min\limits_{(y,M,\mu)\in \Theta_t} g_t^{(y,M,\mu)}(\cdot)$ are continuous, which is provided by Lemmas \ref{continuous of I} and \ref{continuity of theta} in Appendix \ref{app measurable selector}. In more detail, Lemma \ref{continuous of I} holds thanks to the assumption that $\mathcal{C}$ is compact under $d_\mathcal{C}^{\varepsilon}$ with a positive $\varepsilon$. Actually, $\varepsilon>0$ is a sufficient and necessary condition in the proof of Lemma \ref{continuous of I}. When $\varepsilon=0$, $\mathcal{I}$ defined in Lemma \ref{continuous of I} is not H\"{o}lder continuous for any exponent $\alpha$ because
\[
\lim_{r\rightarrow 0}  \frac{| \cos(\vartheta(x,rz))^2 -  \cos(\vartheta(x,r\hat{z}))^2  |}{|rz - r\hat{z}|^\alpha } = \lim_{r\rightarrow 0}  \frac{1}{(\sqrt{2}r)^\alpha } = +\infty,
\]
where $z$ is the unit vector in the direction of $x$, $\hat{z}$ is a unit vector that is orthogonal to $x$, and $\vartheta(\cdot,\cdot)$ is the angle between two vectors.
After finding the candidates for the components of saddle point on $\mathfrak{A}^\pi_0\times \Theta$ in Theorem \ref{measurable selector}, we are going to find the maximizer of $G$ on $\mathfrak{A}^c_0$, which is a deterministic optimization and can be solved by dynamic programming.

\begin{theorem}\label{optimal c}
For any $\theta \in \Theta$ and $\pi\in \mathfrak{A}^\pi_0$, the maximizer of $G(\pi,\cdot,\theta)$ over $\mathfrak{A}^c_0$ is
\begin{small}
\begin{equation}\label{exp of c*}
c^*_t =\left\{
\begin{aligned}
&(T-t+1)^{-1}, &&p=0,\\
& \left\{\!\exp\!\left(\int_t^T\!\!\!\frac{p g^{\theta_u}(\pi_u)}{1-p}du\!\right)\!\!+\!\! \int_t^T\!\! \exp\!\left(\!\int_t^s  \frac{p g^{\theta_u}(\pi_u)}{1-p}du\!\right)ds\!\right\}^{\!-1\!}, && p\neq0.
\end{aligned}
\right.
\end{equation}
\end{small}
The maximum value $G(\pi,c^*,\theta)$ is
\begin{small}
\begin{equation}\label{optm G with c*}
\begin{aligned}
&\left\{
\begin{aligned}
&\!\int_0^T \!\bigg(\!\! \int_0^t g^{\theta_s}(\pi_s)  ds \!+\!\!g^{\theta_t}(\pi_t)\bigg)dt-(T+1)\log(T+1), &&p=0,\\
& \frac{1}{p}\!\left\{\!\exp\!\left(\int_0^T\!\!\!\frac{p g^{\theta_u}(\pi_u)}{1-p}du\!\right)\!\!+\!\! \int_0^T\!\! \exp\!\left(\int_0^s  \frac{p g^{\theta_u}(\pi_u)}{1-p}du\!\right)ds\!\right\}^{\!1\!-p}\!\!-\!\!\frac{1}{p}, && p\neq0.
\end{aligned}
\right.
\end{aligned}
\end{equation}
\end{small}
The range of maximizer $c^*$ is
\begin{equation}\label{range of c}
\left\{
\begin{aligned}
&\bigg[ \big(\frac{-pg_{\max}}{1-p} \vee 0\big) \wedge1, ~~\frac{-pg_{\min}}{1-p} \vee 1   \bigg],&&p>0,\\
&\bigg[ \big(\frac{-pg_{\min}}{1-p} \vee 0\big) \wedge1, ~~\frac{-pg_{\max}}{1-p} \vee 1  \bigg],&&p\leq0,
\end{aligned}
\right.
\end{equation}
where
\begin{equation}\nonumber
\begin{aligned}
&g_{\max} \triangleq \sup_{t\in[0,T]}g^{\theta_t}(\pi_t),
&g_{\min} \triangleq \inf_{t\in[0,T]}g^{\theta_t}(\pi_t).
\end{aligned}
\end{equation}
\end{theorem}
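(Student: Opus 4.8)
# Proof Proposal for Theorem 3.10 (Optimal Consumption Policy)

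The plan is to treat the maximization of $G(\pi,\cdot,\theta)$ over $\mathfrak{A}^c_0$ as a deterministic control problem in $c$ with $\pi$ and $\theta$ frozen, and solve it by the dynamic programming / verification approach. First I would introduce the running quantity $\phi_t \triangleq g^{\theta_t}(\pi_t)$ (a fixed measurable function of $t$) so that, from Definition \ref{def G_power}, $G(\pi,c,\theta)$ becomes a pure functional of $c$: for $p\neq 0$,
\[
G(\pi,c,\theta)=\int_0^T \exp\!\Big(\int_0^t p(\phi_s-c_s)\,ds\Big)\big(\phi_t-c_t+U(c_t)\big)\,dt,
\]
and analogously for $p=0$. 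The key observation is that the exponential factor $\exp(\int_0^t p(\phi_s-c_s)ds)$ couples the instants, so one cannot optimize pointwise; instead I would define the value function of the tail problem
\[
V(t)\triangleq \sup_{c}\ \int_t^T \exp\!\Big(\int_t^s p(\phi_u-c_u)\,du\Big)\big(\phi_s-c_s+U(c_s)\big)\,ds,
\]
and derive for it an ODE. Differentiating under the integral sign and using the Leibniz rule, $V$ satisfies a Bernoulli-type ODE whose right-hand side is obtained by maximizing an instantaneous Hamiltonian over the current consumption rate $c_t\ge 0$.

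Second I would carry out the pointwise maximization inside the Hamiltonian. Schematically the first-order condition in $c_t$ reads (for $p\neq 0$) something like $U'(c_t)=1+pV(t)$, i.e. $c_t^{p-1}=1+pV(t)$, giving $c_t$ as an explicit function of $V(t)$; substituting back linearizes the ODE after the substitution $Y(t)\triangleq$ an appropriate power of $(1+pV(t))$ (one checks $Y(t)=c^*_t{}^{-1}$ up to the claimed form). Integrating the resulting linear first-order ODE from $t$ to $T$ with the terminal condition $V(T)=0$ (equivalently $Y(T)=1$, reflecting only the terminal utility contribution at the last instant — here one must be slightly careful, the terminal utility $U(W_T)$ has already been absorbed into $G$ via Lemma \ref{J=G}, so the genuine boundary condition is $V(T)=0$) yields
\[
c^*_t=\Big\{\exp\!\Big(\int_t^T\tfrac{p\,\phi_u}{1-p}du\Big)+\int_t^T\exp\!\Big(\int_t^s\tfrac{p\,\phi_u}{1-p}du\Big)ds\Big\}^{-1},
\]
which is exactly \eqref{exp of c*}; the degenerate case $p=0$ is the limit and reduces to the elementary $c^*_t=(T-t+1)^{-1}$. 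Plugging $c^*$ into $G$ and simplifying the nested exponentials gives \eqref{optm G with c*}. I would then add a short verification argument (concavity of $c\mapsto \phi-c+U(c)$ and positivity of the exponential weight) to confirm the candidate is a genuine maximizer, not merely a critical point, and to justify the interchange of $\sup$ and the ODE derivation for this class of admissible $c$; the case $p\in(0,1)$ versus $p<0$ only differs in signs and in which branch of the square-bracket expression dominates, so I would handle them in parallel noting the sign of $p/(1-p)$.

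Finally, for the range \eqref{range of c}: since $\phi_t=g^{\theta_t}(\pi_t)\in[g_{\min},g_{\max}]$ for all $t$, I would bound the integrals appearing in \eqref{exp of c*} by replacing $\phi_u$ with its extreme values. For $p\neq 0$ one gets, after monotonicity of $s\mapsto \exp(\int_t^s \tfrac{p\phi_u}{1-p}du)$, that $c^*_t$ lies between the values it would take for constant $\phi\equiv g_{\min}$ and constant $\phi\equiv g_{\max}$; for a constant running value $g$ the bracket evaluates in closed form and $c^*_t$ interpolates between $\big(\tfrac{-pg}{1-p}\vee 0\big)\wedge 1$-type lower bounds (as $T-t\to\infty$, the bracket behaves like a geometric-type sum dominated by $\tfrac{1-p}{-pg}$ when $-pg>0$, and like $T-t+1\to\infty$ otherwise, forcing $c^*_t\to 0$) and the value $\tfrac{-pg}{1-p}\vee 1$ (attained in the limit $t\to T$ where $c^*_T=1$). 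Swapping the roles of $g_{\min}$ and $g_{\max}$ according to $\mathrm{sign}(p)$ produces the two displayed cases. I expect the main obstacle to be the boundary-condition bookkeeping and the rigorous justification of the dynamic programming step — in particular showing that the supremum is attained within $\mathfrak{A}^c_0$ and that one may legitimately differentiate $V$ — rather than the algebra, which is a routine (if lengthy) integration of a linear ODE.
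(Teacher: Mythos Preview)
Your proposal follows essentially the same dynamic-programming/verification route as the paper (its Lemma~\ref{verify thm} is precisely the verification step you anticipate): define the tail value function $V$, extract the first-order condition $c_t=(1+pV(t))^{1/(p-1)}$, reduce the HJB to the Riccati equation $c_t'=\tfrac{p\phi_t}{1-p}c_t+c_t^2$, and integrate. The only minor deviations are that the paper treats $p=0$ by a direct variational argument rather than as a limit, and obtains the range \eqref{range of c} from a priori bounds $[V_{\min},V_{\max}]$ on $V$ (established by a differential-inequality/invariant-region argument in Lemma~\ref{verify thm}) rather than by substituting constant $\phi$ into the explicit formula for $c^*$.
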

\begin{proof}
Lemma \ref{verify thm} in Appendix \ref{app opt c} gives an upper bound of $G(\pi,\cdot,\theta)$ over $\mathfrak{A}^c_0$, so we only need to verify that the $c^*$ defined by \eqref{exp of c*} attains the upper bound. Details are in Appendix \ref{app opt c}.
\end{proof}

Theorem \ref{optimal c} shows the maximum of $G(\pi,\cdot,\theta)$ on $\mathfrak{A}^c_0$ is attainable for any given $(\pi, \theta)\in \mathfrak{A}^\pi_0\times\Theta$. In the case of logarithmic utility, $c^*$ is a function increasing to one and dose not depend on other arguments.
For power utilities, the monotony of $c^*$ is unknown because the arguments $\pi$ and $\theta$ affect $c^*$ through the values of local kernels. Indeed,
the only sure thing is $c^*$ attains one at the terminal time, but it can be non-monotonic. This phenomenon appears in this robust problem because the confidence sets are time-varying. In the degenerated case that $\Theta$ is a constant correspondence, $c^*$ in \eqref{exp of c*} must be monotonically increasing or decreasing. For now, we are ready to constitute a saddle point of $G$ based on above three theorems.

\begin{theorem}\label{exchange}
There exist  $({\pi}^*, {c}^*)\in\mathfrak{A}_0$ and  $\theta^*\in\Theta$ constituting a saddle point of global kernel $G$ with following properties.
\begin{enumerate}[(i)]
\item For any $t\in[0,T]$, $(\pi^*_t, \theta^*_t)$ is a saddle point of local kernel $g$ on $ \mathcal{O}_t\times \Theta_t$.
\item For any $t\in[0,T]$,
\begin{equation}\nonumber
\quad c^*_t =\left\{
\begin{aligned}
&(T-t+1)^{-1}, &&p=0,\\
& \left\{\!\exp\!\left(\int_t^T\!\!\!\frac{p g^{\theta^*_u}(\pi^*_u)}{1-p}du\!\right)\!\!+\!\! \int_t^T\!\!\!\! \exp\!\left(\int_t^s \!\! \frac{p g^{\theta^*_u}(\pi^*_u)}{1-p}du\!\right)\!ds\!\right\}^{\!-1\!}, && p\neq0.
\end{aligned}
\right.
\end{equation}
\item The range of $c^*_t$ is
\begin{equation}\label{range of c*}
\begin{aligned}
\left\{
\begin{aligned}
&[0,~~1], &&p\geq0,\\
&\big[\frac{-p}{1-p}\min_{t\in[0,T]}\{g^{\theta^*_t}(\pi^*_t)\},~~1\big],&&p<0.
\end{aligned}
\right.
\end{aligned}
\end{equation}
\item The value of global kernel at saddle point is
\begin{equation}\nonumber
\quad\begin{aligned}
&\max_{(\pi,c)\in \mathfrak{A}_0} \min_{\theta \in {\Theta}} G((\pi,c),\theta)=\min_{\theta \in \Theta}\max_{(\pi,c)\in \mathfrak{A}_0} G((\pi,c),\theta)=G((\pi^*,c^*),\theta^*)\\
&=\left\{
\begin{aligned}
&\!\!\int_0^T \!\bigg(\!\! \int_0^t g^{\theta^*_s}(\pi^*_s)  ds \!+\!\!g^{\theta^*_t}(\pi^*_t)\bigg)dt-(T+1)\log(T+1), &&p\!=\!0,\\
& \frac{1}{p}\!\left(\!\exp\!\left(\int_0^T\!\!\!\frac{p g^{\theta^*_u}(\pi^*_u)}{1-p}du\!\right)\!\!+\!\! \int_0^T\!\! \exp\!\left(\!\int_0^s  \frac{p g^{\theta^*_u}(\pi^*_u)}{1-p}du\!\right)ds\!\right)^{\!1\!-\!p}\!\!-\!\!\frac{1}{p}, && p\!\neq\!0.
\end{aligned}
\right.
\end{aligned}
\end{equation}
\end{enumerate}
\end{theorem}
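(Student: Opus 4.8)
The plan is to assemble the saddle point out of the three preceding results and then reduce everything to one sandwich inequality. First I would take $\pi^*\in\mathfrak{A}^\pi_0$ and $\theta^*\in\Theta$ furnished by Theorem~\ref{measurable selector}, so that $(\pi^*_t,\theta^*_t)$ is a saddle point of $g$ on $\mathcal{O}_t\times\Theta_t$ for every $t$; this is exactly (i). Next I would \emph{define} $c^*$ by the displayed formula, i.e.\ as the consumption policy produced by Theorem~\ref{optimal c} for the pair $(\pi^*,\theta^*)$; since $t\mapsto g^{\theta^*_t}(\pi^*_t)$ is measurable and the formula is a reciprocal of a strictly positive quantity, $c^*\in\mathfrak{A}^c_0$, which gives (ii), and by Theorem~\ref{optimal c} this $c^*$ maximises $G(\pi^*,\cdot,\theta^*)$ over $\mathfrak{A}^c_0$. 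For (iii) I would specialise the range \eqref{range of c} to $(\pi^*,\theta^*)$: because $0\in\mathcal{O}_t$ and every term of $g$ vanishes at $x=0$, the saddle property forces $g^{\theta^*_t}(\pi^*_t)=\max_{\mathcal{O}_t}g^{\theta^*_t}\geq0$, so $g_{\min},g_{\max}\geq0$ and \eqref{range of c} collapses to $[0,1]$ when $p\geq0$; when $p<0$ I would, in addition, use concavity of $U$ together with $U'(1)=1$ to obtain $g^{(y,M,\mu)}(x)\leq x^T y-\tfrac{1-p}{2}x^T Mx\leq\tfrac{1}{2(1-p)}y^T M^{-1}y\leq\tfrac{1-p}{-p}$ by Assumption~\ref{ass_CRRA}, so that $g_{\max}\leq\tfrac{1-p}{-p}$ and \eqref{range of c} collapses to $\big[\tfrac{-p}{1-p}\min_t g^{\theta^*_t}(\pi^*_t),\,1\big]$.

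It then remains to prove
\[
G((\pi^*,c^*),\theta)\ \geq\ G((\pi^*,c^*),\theta^*)\ \geq\ G((\pi,c),\theta^*)\qquad\text{for all }\theta\in\Theta,\ (\pi,c)\in\mathfrak{A}_0,
\]
after which the weak inequality $\max_{(\pi,c)}\min_\theta G\leq\min_\theta\max_{(\pi,c)}G$ forces both quantities to equal $G((\pi^*,c^*),\theta^*)$, which is (iv). For the left inequality I would note that the saddle property of $(\pi^*_t,\theta^*_t)$ gives $g^{\theta_t}(\pi^*_t)\geq g^{\theta^*_t}(\pi^*_t)$ for all $t$ and all $\theta\in\Theta$, i.e.\ $(\pi^*,\theta)\succcurlyeq(\pi^*,\theta^*)$ in the kernel order of Definition~\ref{def monotonous}, and then apply Theorem~\ref{g_monotony} with the fixed consumption policy $c^*$: for $p\leq0$ this is case (i) of that theorem, and for $p\in(0,1)$ its hypotheses hold because $0\leq c^*_t\leq1$ by the part (iii) just proved, $g^{\theta^*_t}(\pi^*_t)\geq0$, and $g^{\theta_t}(\pi^*_t)\geq g^{\theta^*_t}(\pi^*_t)\geq0$; hence $\theta^*$ minimises $G((\pi^*,c^*),\cdot)$.

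For the right inequality I would avoid kernel monotonicity altogether. Since the domain of $G(\cdot,\cdot,\theta^*)$ is the product $\mathfrak{A}^\pi_0\times\mathfrak{A}^c_0$, one has $\sup_{(\pi,c)}G(\pi,c,\theta^*)=\sup_\pi\sup_c G(\pi,c,\theta^*)$, and by Theorem~\ref{optimal c} the inner supremum is attained and equals the closed form \eqref{optm G with c*} at $\theta^*$. A short check shows that this closed form is increasing in each value $g^{\theta^*_u}(\pi_u)$: immediate for $p=0$, and for $p\neq0$ the signs of $\tfrac{p}{1-p}$ and $\tfrac1p$ combine so that $x\mapsto\tfrac1p x^{1-p}-\tfrac1p$ composed with the relevant integral is again increasing in each $g^{\theta^*_u}(\pi_u)$. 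Since $\pi^*_u$ maximises $g^{\theta^*_u}$ over $\mathcal{O}_u$, the outer supremum is attained at $\pi^*$, so $\sup_{(\pi,c)}G(\pi,c,\theta^*)=G(\pi^*,c^*,\theta^*)$; this completes the sandwich, and the closed form in (iv) is then \eqref{optm G with c*} of Theorem~\ref{optimal c} with $(\pi,\theta)=(\pi^*,\theta^*)$.

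The hard part will be the range $p\in(0,1)$. There Theorem~\ref{g_monotony} gives monotonicity of $G$ only on the restricted region where the integrands $g^{\theta_t}(\pi_t)-c_t+U(c_t)$ stay nonnegative, so the left inequality goes through only because the specific candidate $(\pi^*,c^*,\theta^*)$ and its $\theta$-competitors $(\pi^*,\theta)$ lie in that region; this is precisely why part (iii) (forcing $c^*\in[0,1]$) and the nonnegativity of the local-kernel saddle value must be established first. On the $(\pi,c)$-side the obstruction is simply bypassed by invoking the explicit optimal-consumption formula instead of monotonicity, which is why Theorem~\ref{optimal c} is needed here in full strength; and the case $p<0$ of part (iii) is the one place where the Sharpe-ratio bound of Assumption~\ref{ass_CRRA} actually enters.
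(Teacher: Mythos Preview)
Your proposal is correct and uses the same ingredients as the paper: Theorem~\ref{measurable selector} for $(\pi^*,\theta^*)$, Theorem~\ref{optimal c} for $c^*$ and the closed form \eqref{optm G with c*}, the observation that \eqref{optm G with c*} is monotone in the kernel order, and Theorem~\ref{g_monotony} for comparisons in the $\theta$-direction, with the bounds $g^{\theta^*_t}(\pi^*_t)\ge 0$ and (for $p<0$) $g^{\theta^*_t}(\pi^*_t)\le\tfrac{1-p}{-p}$ feeding into~(iii).

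The only organisational difference is this: the paper computes $\min_\theta\sup_{\pi}\sup_c G$ and $\sup_c\sup_\pi\min_\theta G$ separately and shows both equal $G(\pi^*,c^*,\theta^*)$, whereas you verify the saddle sandwich $G((\pi^*,c^*),\theta)\ge G((\pi^*,c^*),\theta^*)\ge G((\pi,c),\theta^*)$ directly. In particular, for $p\in(0,1)$ the paper handles the $(\pi,c)$-side by first arguing that the value of $\sup_c\sup_\pi\min_\theta G$ is unchanged when one restricts to the region $\{c_t\in[0,1],\,g^{\theta_t}(\pi_t)\ge0\}$ and only then invokes Theorem~\ref{g_monotony}(ii); you bypass that restriction step entirely by using the explicit formula \eqref{optm G with c*} on the $(\pi,c)$-side, reserving Theorem~\ref{g_monotony}(ii) for the $\theta$-side where the hypotheses are immediate from $g^{\theta_t}(\pi^*_t)\ge g^{\theta^*_t}(\pi^*_t)\ge0$ and $c^*_t\in[0,1]$. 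Your route is a little shorter; the paper's route makes more visible that both iterated extrema are attained.
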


\begin{proof}
We only need to verify that the $\pi^*$ and $\theta^*$ selected in Theorem \ref{measurable selector} and the $c^*$ defined by condition $(ii)$ can constitute a saddle point of $G$.
Details are in Appendix \ref{app monotony and conclusion}, where Theorems \ref{g_monotony} and \ref{optimal c} play important roles.
\end{proof}

Recalling the range \eqref{range of c} in Theorem \ref{optimal c}, we notice that the maximizer of $G(\pi,\cdot,\theta)$ over $\mathfrak{A}^c_0$ can be greater than one, which means people may consume more than the total wealth instantaneously (over-consumption occurs). However, the optimal consumption ratio provided by Theorem \ref{exchange} is not more than one according to \eqref{range of c*}.
The over-consumption will not happen under the worst-case model because
\begin{equation}\nonumber
\begin{aligned}
& \frac{-p}{1-p} \min_{t\in[0,T]}\{g^{\theta^*_t}(\pi^*_t)\}\leq1 \text{ for } p>0,
 &   \frac{-p }{1-p} \max_{t\in[0,T]}\{g^{\theta^*_t}(\pi^*_t)\}\leq 1  \text{ for } p\leq0
\end{aligned}
\end{equation}
for $\pi^*$ and $\theta^*$ given by Theorem \ref{exchange}.
Actually, the first condition holds thanks to the optimality of saddle point, that is, for any $p>0$ and $t\in[0,T]$
\[
g_t^{\theta^*_t}(\pi^*_t)\geq g_t^{\theta^*_t}(0)=0\geq \frac{1-p}{-p}.
\]
The second one is tenable because Assumption \ref{ass_CRRA} limits the Sharpe ratio of the market while the over-consumption only occurs in a good enough market.

\subsection{Kernel of CARA}
We try to find a saddle point of $H$ under Assumptions \ref{ass_2.1}, \ref{ass_2.2}, and \ref{ass_CARA} by analogous procedures as above. We denote
\begin{equation}\nonumber
\begin{aligned}
\mathfrak{A}^\Pi_0 \triangleq \big\{\Pi: [0,T] \rightarrow \R^d  \big|
\text{ measurable} \big\},~
\mathfrak{A}_0^D  \triangleq \big\{D : [0,T] \rightarrow \R \big|
\text{ measurable}\big\},
\end{aligned}
\end{equation}
then $\mathfrak{A}_0 =\mathfrak{A}^\Pi_0\times\mathfrak{A}_0^D$ and $H$ can be regarded as a function with three arguments.
According to the comparison between global kernels of CARA and CRRA utilities in Section 4, the procedures of analyzing $H$ should be similar to $G$ with negative $p$.
So we omit the proofs in the following theorems and simply state the main ideas. Besides, it will be easier to understand them while comparing them with the conclusions about $G$.

\begin{Def}\label{def monotonous CARA}
	For any $(\Pi, \theta),~(\hat{\Pi}, \hat{\theta})\in \mathfrak{A}^\Pi_0\times \Theta$, we say $(\Pi, \theta) \succcurlyeq(\hat{\Pi}, \hat{\theta})$ if and only if
	\[
	h_t^{\theta_t}(\Pi_t) \geq h_t^{\hat{\theta}_t}(\hat{\Pi}_t), \forall t\in[0,T].
	\]
	We call $(\Pi, \theta)$ is greater than $(\hat{\Pi}, \hat{\theta})$ in the kernel order.
\end{Def}

\begin{theorem}\label{g_monotony CARA}
	For any $D \in \mathfrak{A}_0^D$, and $(\Pi, \theta),~(\hat{\Pi}, \hat{\theta})\in \mathfrak{A}^\Pi_0\times \Theta$,
	\[
	(\Pi, \theta)\succcurlyeq(\hat{\Pi}, \hat{\theta})
	\Rightarrow H(\Pi,D,\theta)\geq H(\hat{\Pi},D,\hat{\theta}).
	\]
\end{theorem}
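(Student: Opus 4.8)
The plan is to follow the argument for Theorem~\ref{g_monotony}(i), the CRRA case with $p\le 0$, since $H$ has the same shape as $G$ with $p$ replaced by $-a$ and the running rate rescaled by $q_t$. Abbreviate $F(t)\triangleq h_t^{\theta_t}(\Pi_t)$ and $\hat F(t)\triangleq h_t^{\hat\theta_t}(\hat\Pi_t)$, so that the kernel order $(\Pi,\theta)\succcurlyeq(\hat\Pi,\hat\theta)$ is exactly the pointwise inequality $F(t)\ge\hat F(t)$ on $[0,T]$. Since $D$ is held fixed, the claim reduces to showing that the functional
\[
\mathcal H(F)\triangleq\int_0^T \exp\!\Big(-a\!\int_0^t q_s\big(F(s)-D_s\big)\,ds\Big)\Big(q_t\big(F(t)-D_t\big)+U(D_t)\Big)\,dt
\]
is monotone increasing for the pointwise order on $F$.

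The first step is a structural identity: the running payoff $q_t(F(t)-D_t)$ is, up to the factor $-1/a$, the instantaneous logarithmic growth rate of the discount factor $e^{-a\int_0^t q_s(F(s)-D_s)\,ds}$. Writing $\Psi(t)\triangleq -a\int_0^t q_s(F(s)-D_s)\,ds$, so $\Psi(0)=0$ and $q_t(F(t)-D_t)=-\Psi'(t)/a$ a.e., the discount-weighted integral of the linear part telescopes via $\int_0^T e^{\Psi(t)}\Psi'(t)\,dt=e^{\Psi(T)}-1$, giving
\[
\mathcal H(F)=\frac1a\big(1-e^{\Psi(T)}\big)+\int_0^T e^{\Psi(t)}U(D_t)\,dt .
\]
This uses that $\Psi$ is absolutely continuous, which is where an integrability hypothesis on $F$ enters; one may assume $H(\hat\Pi,D,\hat\theta)>-\infty$ since otherwise there is nothing to prove, and check, as in the CRRA case, that the kernel order forces enough integrability of $F$ that $\mathcal H(F)$ is well defined (note from the identity that $\mathcal H(F)\le 1/a$ always).

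The second step is the sign comparison. From $F\ge\hat F$ together with $aq_s>0$ (indeed $a>0$ and $q_s=(T-s+1)^{-1}>0$) one gets $\Psi(t)\le\hat\Psi(t)$ for all $t$, hence $e^{\Psi(t)}\le e^{\hat\Psi(t)}$. Since $a>0$, the first term of the identity satisfies $\tfrac1a(1-e^{\Psi(T)})\ge\tfrac1a(1-e^{\hat\Psi(T)})$. For the second term, the crucial point is that the exponential utility is negative, $U(D_t)=\tfrac1{-a}e^{-aD_t}<0$, so multiplying $e^{\Psi(t)}\le e^{\hat\Psi(t)}$ by $U(D_t)$ reverses it, $e^{\Psi(t)}U(D_t)\ge e^{\hat\Psi(t)}U(D_t)$, and integrating in $t$ preserves this. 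Adding the two estimates gives $\mathcal H(F)\ge\mathcal H(\hat F)$, i.e.\ $H(\Pi,D,\theta)\ge H(\hat\Pi,D,\hat\theta)$.

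The calculation is short once the telescoping identity is spotted, so there is no deep obstacle. The only thing demanding genuine care is the bookkeeping of the two sign reversals — the factor $-a<0$ in the exponent turns $F\ge\hat F$ into $\Psi\le\hat\Psi$, and the negativity of $U$ turns that back into the correct inequality for the consumption term — together with ensuring $H$ is not of the indeterminate form $\infty-\infty$ under the stated hypotheses, exactly the point handled in the CRRA treatment by restricting to policies with finite kernel values.
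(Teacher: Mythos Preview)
Your proof is correct but takes a genuinely different route from the paper. The paper follows its CRRA proof for $p<0$ verbatim: it interpolates linearly, setting $h^\delta_t=h_t^{\hat\theta_t}(\hat\Pi_t)+\delta\bar h_t$ with $\bar h_t\ge 0$, differentiates the integrand in $\delta$, and obtains
\[
f'(\delta)=e^{\,\cdots}\Big[(q_t(h^\delta_t-D_t)+U(D_t))\!\int_0^t\!-aq_s\bar h_s\,ds+q_t\bar h_t\Big],
\]
which is nonnegative provided $q_t(h^\delta_t-D_t)+U(D_t)\le 0$. This last sign condition is precisely what Assumption~\ref{ass_CARA} (the Sharpe-ratio bound) is invoked to guarantee, exactly parallel to how Assumption~\ref{ass_CRRA} enters the CRRA argument.

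Your telescoping identity $\mathcal H(F)=\tfrac1a(1-e^{\Psi(T)})+\int_0^T e^{\Psi(t)}U(D_t)\,dt$ bypasses the variational step entirely and then uses only $a>0$, $q_s>0$, and $U(D_t)<0$ to compare the two terms. This is more elementary and, strikingly, does not use Assumption~\ref{ass_CARA} at all; the paper's argument genuinely needs it because of the structure of the $\delta$-derivative. (In fact your decomposition would give an equally assumption-free proof of Theorem~\ref{g_monotony}(i) for $p<0$, since there $U(c_t)=c_t^p/p<0$ plays the same role.) Your caveat about integrability is appropriate and at the same level of rigor as the paper's treatment, which does not discuss it.
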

\begin{proof}
The proof can be easily finished once referring to the proof of Theorem \ref{g_monotony} for $p<0$. Assumption \ref{ass_CARA} ensures $q_t(h_t -D_t)+U(D_t) \leq 0$ and plays the same role as Assumption \ref{ass_CRRA} in Theorem \ref{g_monotony}.
\end{proof}

\begin{theorem}\label{measurable selector CARA}
There exist a $\Pi^*\in\mathfrak{A}^\Pi_0$ and a $\theta^*\in\Theta$ such that $(\Pi^*_t, \theta_t)$ is a saddle point of $h_t$ over $ \R^d\times \Theta_t$ for every $t\in[0,T]$.
\end{theorem}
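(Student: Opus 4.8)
\emph{Proof proposal.} The plan is to imitate the proof of Theorem \ref{measurable selector}, with the power parameter replaced by $-a$ and the local kernel $g_t$ replaced by $h_t$, the only genuinely new feature being that the investment variable now ranges over the unbounded set $\R^d$ rather than over a bounded $\mathcal{O}_t$. I would organise the argument in three stages: (a) a pointwise minimax identity for $h_t$ on $\R^d\times\Theta_t$ at each fixed $t$; (b) joint regularity of $(t,x,\theta)\mapsto h_t^{\theta}(x)$; (c) two successive applications of the measurable maximum theorem to extract Borel selectors $\Pi^{*}\in\mathfrak{A}^{\Pi}_0$ and $\theta^{*}\in\Theta$, followed by a short verification that $(\Pi^{*}_t,\theta^{*}_t)$ is a saddle point of $h_t$ at every $t$.

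For the pointwise picture, rewrite the integrand in Definition \ref{def g_CARA} as $\frac{-1}{aq_t}\bigl(e^{-aq_t x^{T}z}-1+aq_t x^{T}z\bigr)$, which is nonpositive and concave in $x$; together with the affine term $x^{T}y$ and the strictly concave term $-\tfrac12 aq_t x^{T}Mx$ this makes $x\mapsto h_t^{(y,M,\mu)}(x)$ strictly concave, while $(y,M,\mu)\mapsto h_t^{(y,M,\mu)}(x)$ is affine, hence both convex and concave; $\Theta_t$ is convex and compact by Assumption \ref{ass_2.1}. Since $\mathcal{C}$ is compact with matrix component in $\mathbb{S}_+^d$, there is a constant $\lambda_0>0$ such that every $M$ occurring in $\mathcal{C}$ has all eigenvalues at least $\lambda_0$, and $q_t\ge(T+1)^{-1}$; discarding the nonpositive jump term gives $h_t^{\theta}(x)\le\kappa_t|x|-\tfrac12 a(T+1)^{-1}\lambda_0|x|^{2}$, whereas $h_t^{\theta}(0)=0$. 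Hence the unique maximizer of $h_t^{\theta}(\cdot)$, and likewise any maximizer of $x\mapsto\min_{\theta\in\Theta_t}h_t^{\theta}(x)$, lies in a ball $\bar B_{R_t}$ whose radius is controlled by $\kappa_t,a,\lambda_0,T$ only; restricting to the product of convex compacta $\bar B_{R_t}\times\Theta_t$ and invoking Sion's minimax theorem (cf. Sion (1958)) yields
\[
\min_{\theta\in\Theta_t}\max_{x\in\R^{d}}h_t^{\theta}(x)=\max_{x\in\R^{d}}\min_{\theta\in\Theta_t}h_t^{\theta}(x)=:v_t,
\]
with both extrema attained; this is the analogue of the minimax step in Theorem \ref{measurable selector}.

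For the measurability stage I would first record that $(t,x,\theta)\mapsto h_t^{\theta}(x)$ is jointly continuous: $q_t$ is smooth in $t$, the dependence on $x,y$ and on the matrix $M$ is continuous, and the dependence on $\mu$ through $\int\frac{-1}{aq_t}\bigl(e^{-aq_t x^{T}z}-1+aq_t x^{T}z\bigr)\mu(dz)$ is $d_{\mathcal{L}}^{\varepsilon}$-continuous by the reasoning behind Lemma \ref{continuity of theta}, namely that after division by $|z|^{2-\varepsilon}\wedge1$ the integrand extends to a bounded $(\varepsilon\wedge1)$-H\"{o}lder function on $\R^{d}$ (it behaves like a multiple of $|z|^{\varepsilon}$ near the origin and is smooth on the bounded set $\mathbf{S}_t$), so the integral is a $d_{BH}^{\varepsilon}$-continuous functional of the measure $(|z|^{2-\varepsilon}\wedge1)\,.\,\mu$; this is exactly where $\varepsilon>0$ enters. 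Granting this, the measurable maximum theorem (cf. Aliprantis and Border (2006), Theorem 18.19) applied to $h$ and the weakly measurable compact-valued correspondence $\Theta$ shows that $\phi(t,x):=\min_{\theta\in\Theta_t}h_t^{\theta}(x)$ is a Carath\'{e}odory function of $(t,x)$, and applied to $h$ on the constant correspondence $\bar B_{R}$ (with $R$ a radius dominating all $R_t$, finite and measurable in $t$ since controlled by $\kappa_t$ and $q_t\ge(T+1)^{-1}$) shows that $\psi(t,\theta):=\max_{x\in\bar B_{R}}h_t^{\theta}(x)$ is a Carath\'{e}odory function on $[0,T]\times\mathcal{C}$. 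A second round of the same theorem then produces a $\Pi^{*}\in\mathfrak{A}^{\Pi}_0$ with $\Pi^{*}_t\in\arg\max_{x\in\R^{d}}\phi(t,x)$ and a $\theta^{*}\in\Theta$ with $\theta^{*}_t\in\arg\min_{\theta\in\Theta_t}\psi(t,\theta)$ for every $t$ — precisely the repeated Kuratowski--Ryll-Nardzewski / measurable maximum selection scheme of Theorem \ref{measurable selector}.

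Finally, the verification: since $\phi(t,\Pi^{*}_t)=\max_{x}\min_{\theta}h_t^{\theta}(x)=v_t$ and $\psi(t,\theta^{*}_t)=\min_{\theta}\max_{x}h_t^{\theta}(x)=v_t$ by the minimax identity,
\[
v_t=\min_{\theta\in\Theta_t}h_t^{\theta}(\Pi^{*}_t)\le h_t^{\theta^{*}_t}(\Pi^{*}_t)\le\max_{x\in\R^{d}}h_t^{\theta^{*}_t}(x)=v_t,
\]
forcing $h_t^{\theta^{*}_t}(\Pi^{*}_t)=v_t$; hence $h_t^{\theta}(\Pi^{*}_t)\ge v_t=h_t^{\theta^{*}_t}(\Pi^{*}_t)\ge h_t^{\theta^{*}_t}(x)$ for all $x\in\R^{d}$ and $\theta\in\Theta_t$, i.e. $(\Pi^{*}_t,\theta^{*}_t)$ is a saddle point of $h_t$ on $\R^{d}\times\Theta_t$. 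I expect the main obstacle to be stage (b) combined with the unboundedness of the domain: one must simultaneously establish the $d_{\mathcal{L}}^{\varepsilon}$-continuity of the exponential jump integral uniformly for $x$ in a ball (the role of $\varepsilon>0$) and keep the ball that traps the maximizers under measurable control in $t$ (its radius involves $\kappa_t$ and $q_t$), so that the measurable maximum theorem is legitimately applied to a compact-valued correspondence rather than to $\R^{d}$ itself; once these are settled, the remaining steps are faithful transcriptions of the CRRA argument, with Theorem \ref{g_monotony CARA} playing in the sequel (Theorem \ref{exchange CARA}) the role that Theorem \ref{g_monotony} plays for $G$.
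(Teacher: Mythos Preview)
Your proposal is correct and follows essentially the same route as the paper: Sion's minimax theorem for the pointwise saddle, the H\"older/$d_{\mathcal{L}}^{\varepsilon}$ continuity estimate for the jump integral (the CARA analogue of Lemmas \ref{continuous of I}--\ref{continuity of theta}), and then the measurable maximum / Kuratowski--Ryll-Nardzewski machinery of Appendix~\ref{app measurable selector}. The one noteworthy difference is in how you handle the unboundedness of $\R^d$: the paper introduces the exhausting sequence $\mathcal{O}_t^n=\{|x|\le n\}$ and mimics the CRRA limit argument (as in Lemma~\ref{existence of theta}, where $\min_{\Theta_t}\sup_{\mathcal{O}_t}g_t=\lim_n\min_{\Theta_t}\max_{\mathcal{O}_t^n}g_t$), whereas you use the coercivity bound $h_t^{\theta}(x)\le\kappa_t|x|-\tfrac12 a(T+1)^{-1}\lambda_0|x|^2$ to trap all maximizers in a single ball $\bar B_R$ and work there directly. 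Your variant is legitimate --- the uniform lower eigenvalue bound $\lambda_0>0$ follows from compactness of $\mathcal{C}$ in $\mathbb{S}_+^d$, and a uniform $R$ exists since all $\kappa_t$ are dominated by the diameter of $\mathcal{C}$ --- and it is slightly cleaner in that it avoids the $n\to\infty$ step; the paper's sequence-of-balls device buys closer structural parallelism with the CRRA proof, where coercivity is unavailable and one genuinely needs the exhaustion of $\mathcal{O}_t$ by the $\mathcal{O}_t^n$.
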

\begin{proof}
The main tools are Kuratowski-Ryll-Nardzewski measurable selection theorem and measurable maximum theorem as in the proof of Theorem \ref{measurable selector}.
A subset of $\R^d$ should be proposed as
$\mathcal{O}_t^n  \triangleq \left\{x\in\R^d : |x|\leq n\right\}$
to avoid the singularity of local kernel $h_t$ for any $t$.
\end{proof}

Thanks to the monotonicity of $H$ described in Theorem \ref{g_monotony CARA}, the $\Pi^*$ and $\theta^*$ selected in Theorem \ref{measurable selector CARA} are the candidates for the components of $H$'s saddle point. Then we study the maximizer of  $H$ over $\mathfrak{A}^D_0$ for any given $\Pi$ and $\theta$.

\begin{theorem}\label{optimal c  CARA}
For any $\theta \in \Theta$ and $\Pi\in \mathfrak{A}^\Pi_0$, the maximizer of $H(\Pi,\cdot,\theta)$ over $\mathfrak{A}_0^D$ is
\begin{equation}\label{exp of c* CARA}
D^*_t = q_t \int_t^T h_s^{\theta_s}(\Pi_s) ds, ~\forall t\in[0,T].
\end{equation}
The maximum value is
\begin{equation}\label{optm G with c* CARA}
\begin{aligned}
&H(\Pi,D^*,\theta)=  \frac{1}{a}\left(1-(T+1) \exp\left(-\frac{a}{T\!+\!1}\int_0^T h_s^{\theta_s}(\Pi_s) ds\right)\right).
\end{aligned}
\end{equation}%
\end{theorem}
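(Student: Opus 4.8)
The plan is to reduce the optimization over $\mathfrak{A}_0^D$ to a single application of Jensen's inequality, exactly parallel to the CRRA treatment via Lemma \ref{verify thm}. Fix $\Pi\in\mathfrak{A}^\Pi_0$, $\theta\in\Theta$ and abbreviate $h_t:=h_t^{\theta_t}(\Pi_t)$; I may assume $h\in L^1([0,T])$, since otherwise $H(\Pi,\cdot,\theta)\equiv-\infty$ and there is nothing to prove (note the jump integrand in $h_t$ is $O(|z|^2)$ near the origin, so $h_t$ is finite for every L\'evy measure). Set $Z_t:=\int_0^t q_s(h_s-D_s)\,ds$; it is absolutely continuous with $\dot Z_t=q_t(h_t-D_t)$, so $\frac{d}{dt}e^{-aZ_t}=-aq_t(h_t-D_t)e^{-aZ_t}$ a.e.\ and, since $Z_0=0$ and $U(x)=\tfrac{1}{-a}e^{-ax}$, a direct computation (splitting the integral defining $H$ and using this antiderivative on the first piece) gives the rewriting
\[
H((\Pi,D),\theta)=\frac1a\Bigl(1-e^{-aZ_T}-\int_0^T e^{-a(Z_t+D_t)}\,dt\Bigr).
\]

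Next I would establish the conservation identity
\[
\int_0^T(Z_t+D_t)\,dt+Z_T=\int_0^T h_s\,ds=:I\qquad\text{for every }D\in\mathfrak{A}_0^D,
\]
which follows from Fubini: $\int_0^T Z_t\,dt=\int_0^T q_s(h_s-D_s)(T-s)\,ds$, hence $\int_0^T Z_t\,dt+Z_T=\int_0^T q_s(h_s-D_s)(T-s+1)\,ds=\int_0^T(h_s-D_s)\,ds$ because $q_s(T-s+1)=1$, and adding $\int_0^T D_t\,dt$ leaves $I$. Regarding the family $\{Z_t+D_t:t\in[0,T]\}$ together with the extra value $Z_T$ as a random variable on the probability space $\bigl([0,T]\cup\{*\},\ \tfrac1{T+1}(\lambda+\delta_{*})\bigr)$, where $\lambda$ is Lebesgue measure on $[0,T]$, the identity says its mean is $I/(T+1)$, so Jensen's inequality for the convex function $x\mapsto e^{-ax}$ yields
\[
e^{-aZ_T}+\int_0^T e^{-a(Z_t+D_t)}\,dt\ \geq\ (T+1)\,e^{-aI/(T+1)},
\]
and since $a>0$ this is precisely the upper bound $H((\Pi,D),\theta)\leq\frac1a\bigl(1-(T+1)e^{-aI/(T+1)}\bigr)$; when some integral is $+\infty$ the bound is trivial and $H=-\infty$.

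Finally I would verify that $D^*_t=q_t\int_t^T h_s\,ds$ attains equality. Writing $\phi(t):=\int_t^T h_s\,ds$ and using $q_t'=q_t^2$, $\phi'=-h$, one gets $\frac{d}{dt}(q_t\phi(t))=q_t^2\phi(t)-q_t h_t=-q_t(h_t-D^*_t)=-\dot Z^*_t$; hence $Z^*_t=D^*_0-D^*_t$ with $D^*_0=q_0\phi(0)=I/(T+1)$ and $D^*_T=q_T\phi(T)=0$, so $Z^*_t+D^*_t\equiv I/(T+1)$ and $Z^*_T=I/(T+1)$. Thus the random variable in the Jensen step is a.e.\ constant, equality holds, and substituting into the rewriting of $H$ gives $H((\Pi,D^*),\theta)=\frac1a\bigl(1-e^{-aI/(T+1)}-Te^{-aI/(T+1)}\bigr)=\frac1a\bigl(1-(T+1)e^{-aI/(T+1)}\bigr)$, which is \eqref{optm G with c* CARA}; and $D^*\in\mathfrak{A}_0^D$ because it is continuous. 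The one point requiring care is integrability bookkeeping: before invoking Fubini and Jensen one should note that $H((\Pi,D),\theta)>-\infty$ forces $t\mapsto Z_t$ to be finite and $e^{-aZ_T}$, $t\mapsto e^{-a(Z_t+D_t)}$ to be integrable, so the computation is legitimate on that set, while on its complement the desired inequality is automatic. As an alternative route, after the integration by parts the map $D\mapsto Z_t(D)$ is affine, so $H(\Pi,\cdot,\theta)$ is a concave functional on $\mathfrak{A}_0^D$ and it suffices to check that its Gateaux derivative vanishes at $D^*$, which again reduces to the Fubini identity $q_s(T-s+1)=1$.
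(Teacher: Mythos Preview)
Your argument is correct and genuinely different from the paper's. The paper proceeds by dynamic programming: it introduces a verification lemma (Lemma~\ref{verify thm CARA}) stating that the $C^1$ solution $V$ of the HJB equation
\[
V'(t)+q_th_t(1-aV(t))+\sup_{D_t\in\R}\{U(D_t)-q_tD_t(1-aV(t))\}=0,\qquad V(T)=0,
\]
gives the upper bound $\sup_{D}H(\Pi,D,\theta)\le V(0)$, proves existence and uniqueness of $V$ via Picard--Lindel\"of after locating an invariant strip $[V_{\min},V_{\max}]$, and then checks that substituting $V(t)=\tfrac{1}{a}(1-q_t^{-1}e^{-aD_t})$ turns the HJB equation into $\dot D_t=q_t(D_t-h_t)$, $D_T=0$, whose explicit solution is your $D^*$.

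Your route bypasses the ODE machinery entirely: after the closed-form rewriting $H=\tfrac1a\bigl(1-e^{-aZ_T}-\int_0^T e^{-a(Z_t+D_t)}\,dt\bigr)$ and the conservation identity (both of which are clean and correct, resting only on $q_s(T-s+1)\equiv1$), a single Jensen inequality on the probability space $([0,T]\cup\{*\},\tfrac{1}{T+1}(\lambda+\delta_*))$ gives the sharp upper bound, with equality exactly when $Z_t+D_t$ and $Z_T$ are all equal to $I/(T+1)$, which you verify for $D^*$. This is shorter, more elementary, and makes the role of the structural constant $q_t=(T-t+1)^{-1}$ completely transparent. The paper's approach, on the other hand, is the standard control-theoretic template and yields as a by-product the a~priori bounds \eqref{min of V CARA}--\eqref{max of V CARA} on the value function; it also parallels the CRRA treatment more closely, which matters for the paper's overall narrative. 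One small remark: your opening sentence says the plan is ``exactly parallel to the CRRA treatment via Lemma~\ref{verify thm}'', but in fact you are \emph{not} following that lemma's HJB route---you might rephrase this.
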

\begin{proof}
Lemma \ref{verify thm CARA} in Appendix \ref{app opt c} gives an upper bound of $H(\Pi,\cdot,\theta)$ over $\mathfrak{A}_0^D$, so it is enough to verify $D^*$ defined by \eqref{exp of c* CARA} attains the upper bound. More details are in Appendix \ref{app opt c}.
\end{proof}

It not difficult to realize the procedure of proving Theorem \ref{optimal c  CARA} is analogous to Theorem \ref{optimal c}, but we still show the details in Appendix \ref{app opt c} because the  associated HJB equations have different forms.

\begin{theorem}\label{exchange CARA}
There exist $({\Pi}^*, {D}^*)\in\mathfrak{A}_0$ and $\theta^*\in\Theta$ constituting a saddle point of global kernel $H$ with following properties.
\begin{enumerate}[(i)]
\item For any $t\in[0,T]$, $(\Pi^*_t, \theta^*_t)$ is a saddle point of local kernel $h_t$  over $\R^d\times \Theta_t$.
\item For any $t\in[0,T]$,
\begin{equation}\nonumber
D^*_t = q_t\int_t^T h_s^{\theta^*_s}(\Pi^*_s) ds.
\end{equation}
\item The value of global kernel at saddle point is
\begin{equation}\nonumber
\begin{aligned}
&
\min_{\theta \in \Theta}\max_{(\Pi,D)\in \mathfrak{A}_0} H((\Pi,D),\theta)
=\max_{(\Pi,D)\in \mathfrak{A}_0} \min_{\theta \in \Theta}H((\Pi,D),\theta)
=H((\Pi^*,D^*),\theta^*)\\
&=\frac{1}{a}\left(1-(T+1) \exp\left(-\frac{a}{T+1}\int_0^T h_s^{\theta^*_s}(\Pi^*_s) ds\right)\right).
\end{aligned}
\end{equation}
\end{enumerate}

\end{theorem}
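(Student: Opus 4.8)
The plan is to run verbatim the argument behind Theorem~\ref{exchange}, substituting the three CARA ingredients: the pointwise measurable saddle points of $h_t$ from Theorem~\ref{measurable selector CARA}, the kernel-order monotonicity of $H$ from Theorem~\ref{g_monotony CARA}, and the optimal excess consumption from Theorem~\ref{optimal c CARA}. First I would fix $\Pi^*\in\mathfrak{A}^\Pi_0$ and $\theta^*\in\Theta$ as furnished by Theorem~\ref{measurable selector CARA}, so that for every $t\in[0,T]$ one has $h_t^{\theta^*_t}(x)\le h_t^{\theta^*_t}(\Pi^*_t)\le h_t^{\theta_t}(\Pi^*_t)$ for all $x\in\R^d$ and $\theta_t\in\Theta_t$; then I would \emph{define} $D^*_t\triangleq q_t\int_t^T h_s^{\theta^*_s}(\Pi^*_s)\,ds$, which is exactly the maximizer of $H(\Pi^*,\cdot,\theta^*)$ over $\mathfrak{A}_0^D$ prescribed by Theorem~\ref{optimal c CARA}. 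The claim is that $((\Pi^*,D^*),\theta^*)$ is the sought saddle point, so that (i) and (ii) hold by construction.

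Second, I would verify the two saddle-point inequalities. For optimality against $\theta^*$: given any $(\Pi,D)\in\mathfrak{A}_0$, pointwise maximality of $\Pi^*_t$ gives $(\Pi^*,\theta^*)\succcurlyeq(\Pi,\theta^*)$ in the order of Definition~\ref{def monotonous CARA}, so Theorem~\ref{g_monotony CARA} yields $H(\Pi^*,D,\theta^*)\ge H(\Pi,D,\theta^*)$, and Theorem~\ref{optimal c CARA} yields $H(\Pi^*,D^*,\theta^*)\ge H(\Pi^*,D,\theta^*)$; chaining the two gives $H(\Pi^*,D^*,\theta^*)\ge H(\Pi,D,\theta^*)$. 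For the worst-case property of $\theta^*$: given any $\theta\in\Theta$, pointwise minimality of $\theta^*_t$ gives $(\Pi^*,\theta)\succcurlyeq(\Pi^*,\theta^*)$, so Theorem~\ref{g_monotony CARA} applied with the fixed consumption $D^*$ yields $H(\Pi^*,D^*,\theta)\ge H(\Pi^*,D^*,\theta^*)$. These are precisely the inequalities defining a saddle point of $H$.

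Third, part (iii) follows from the usual squeeze: one always has $\sup_{(\Pi,D)}\inf_\theta H\le\inf_\theta\sup_{(\Pi,D)}H$, while the saddle-point inequalities show that $\theta^*$ attains $\inf_\theta H((\Pi^*,D^*),\theta)$ and $(\Pi^*,D^*)$ attains $\sup_{(\Pi,D)}H((\Pi,D),\theta^*)$, so both extrema equal $H((\Pi^*,D^*),\theta^*)$ and are attained, which legitimizes writing $\max$/$\min$; the closed form is then the maximum value recorded in Theorem~\ref{optimal c CARA}. The step I expect to be the real obstacle is checking that $D^*$ is genuinely admissible, i.e. that $s\mapsto h_s^{\theta^*_s}(\Pi^*_s)$ is measurable and integrable on $[0,T]$: measurability comes from joint measurability of $h$ together with that of the selectors $\Pi^*,\theta^*$; the function is non-negative because $h_t^{\theta_t}(0)=0$ and $\Pi^*_t$ maximizes $h_t^{\theta^*_t}(\cdot)$, and it is bounded above uniformly in $t$ by a quantity controlled by Assumption~\ref{ass_CARA} and the compactness of $\Theta_t$ — the exact analogue of how Assumption~\ref{ass_CRRA} enters Theorem~\ref{exchange}. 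Since the CARA kernel behaves like the CRRA kernel with a negative exponent, none of the extra restrictions that Theorem~\ref{g_monotony} requires for $p\in(0,1)$ are needed, so the monotonicity step here is unconditional; the remaining bookkeeping parallels Appendix~\ref{app monotony and conclusion}.
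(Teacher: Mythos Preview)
Your proposal is correct and follows essentially the same route as the paper: the paper's own proof merely refers back to the proof of Theorem~\ref{exchange} for $p<0$, invoking Theorems~\ref{g_monotony CARA}, \ref{measurable selector CARA}, and \ref{optimal c CARA} in place of their CRRA counterparts, which is precisely what you do. Your organization---directly verifying the two saddle-point inequalities rather than separately computing $\min_\theta\sup_{(\Pi,D)}H$ and $\sup_{(\Pi,D)}\min_\theta H$ as in the CRRA template---is a slight streamlining, but the substance is identical.
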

\begin{proof}
The main proof is analogous to the proof of Theorem \ref{exchange} for $p<0$, while using Theorems \ref{g_monotony CARA}, \ref{measurable selector CARA}, and \ref{exp of c* CARA}.
\end{proof}
In Definition \ref{def g_CARA}, the function $h_t$ depends on the time $t$, thus $\Pi^*$ is time-dependent even though the confidence set $\Theta_t$ keeps constant. This is an explanation of why CARA investors are not myopic from a mathematical point of view.
By the optimality of saddle point, we have
\[
h_t^{\theta^*_t}(\Pi^*_t)\geq h_t^{\theta^*_t}(0)=0,~\forall t\in[0,T],
\]
which means $D^*$ is a non-negative function.

\vskip 15pt
\setcounter{equation}{0}
\section{Conclusion}
The main contribution of this paper is solving the robust consumption-investment problem in the market with jumps by martingale method. Conceptually, the uncertainty set consists of all the semimartingale measures, under which the DC triplet of the log-price processes is a PII triplet (measurable selector from the correspondence $\Theta$) almost surely. Under this framework, the  instantaneous confidence sets of all possible L\'{e}vy triplets are time-varying, which causes mensurability problems.

Thanks to the well-defined policy for each utility, the martingale equality describes the relationship between the objective function and global kernel, which is the core in the deterministic-to-stochastic paradigm. Thus the robust consumption-investment problem can be solved by finding a saddle point of the global kernel.
However, it is difficult to find a measurable saddle point of the global kernel, because the confidence set $\Theta_t$ is time-varying and the global kernel is a functional.
Fortunately, while regarding the global kernel as a function with three arguments --- an investment policy, a consumption policy, and a DC triplet, we obtain two essential conclusions .
For fixed consumption policy, the global kernel is an increasing functional under the kernel order; for fixed investment policy and DC triplet, an optimal consumption policy can be explicitly expressed. Synthesizing these two conclusions, a saddle point of the global kernel can be found by the measurable saddle point theorem.

For a CRRA investor, the choices of optimal investment policy and worst-case differential characteristics are both myopic, uniquely determined by the instantaneous local kernel, i.e., the instantaneous model of the market. Thus, the optimal portfolio is changeless if the confidence set $\Theta_t$ is constant. However, the time effect appears for CARA investors. The optimal investment policy and the worst-case differential characteristics are influenced by the remaining duration as well as the instantaneous model. Besides, the optimal consumption policy is non-myopic for any utilities and is affected by both local kernels and remaining duration.

\vskip 10pt
\vskip 10pt
\appendix
\vskip 10pt
\section{Proofs of Lemma \ref{J=G}, Theorem \ref{thm_CRRA}, and Lemma \ref{J=G_CARA}}\label{app1}
\textit{\textbf{Proof of Lemma \ref{J=G}}.}
By (\ref{sde of X}), (\ref{sde of W_pic}), and It\^{o}'s formula for jump processes (cf. {\O}ksendal and Sulem (2005)), we have
\begin{equation}\label{exp of logW}
\begin{aligned}
\log(W^{(\bar{\pi},\bar{c})}_t)
=& \log(w_0) \!+\! \int_0^t \!(g^{\theta^P_s}(\bar{\pi}_s) \!-\!\bar{c}_s )ds  \!+\! \int_0^t \bar{\pi}^T_s\sigma^P_s dB_s\\
&+\int_0^t\!\!\int_{\R^d}\log(1+\bar{\pi}_s^T z) \tilde{\nu}^P(dz,ds)
\end{aligned}
\end{equation}
for any $ t\in[0,T]$.
Take expectation on both sides of \eqref{exp of logW}, then
\begin{equation}\nonumber
\begin{aligned}
\E^P[\log(W^{(\bar{\pi},\bar{c})}_t)]=\log(w_0) + \E^P[\int_0^t (g^{\theta^P_s}(\bar{\pi}_s) -\bar{c}_s )ds],
\end{aligned}
\end{equation}
and
\begin{small}
\begin{equation}\nonumber
\begin{aligned}
&J((\bar{\pi}, \bar{c}),P) \\
=&\E^P\bigg[\int_0^T \log(\bar{c}_tW^{(\bar{\pi},\bar{c})}_t) dt +\log(W^{(\bar{\pi},\bar{c})}_T)\bigg]\\
=&\E^P\bigg[\int_0^T \bigg(\log(\bar{c}_t)+ \log(W^{(\bar{\pi},\bar{c})}_t)\bigg) dt +\log(W^{(\bar{\pi},\bar{c})}_T)\bigg]\\
=&\E^P\bigg[\int_0^T\!\! \bigg(\!\log(\bar{c}_t)\!+\! \log(w_0) \!+\!\! \int_0^t\! (g^{\theta^P_s}(\bar{\pi}_s) \!-\!\bar{c}_s )ds\bigg) dt
+\! \log(w_0)  \!\!+\!\! \int_0^T\! \!\! (g^{\theta^P_t}(\bar{\pi}_t)\!-\!\bar{c}_t)dt \bigg]\\
=&(T+1)\log(w_0) +¡¡\int_\Omega  G((\bar{\pi},\bar{c}),\theta^P) P(d\omega).
\end{aligned}
\end{equation}
\end{small}
This is the result for logarithmic utility ($p=0$). Next we use the expansion of $\log(W_t)$ in \eqref{exp of logW} to expand $W_t^p$ as a product. For any $ t\in[0,T]$,

\begin{equation}\label{exp of W^p}
\begin{aligned}
&(W_t^{(\bar{\pi},\bar{c})})^p\\
=&\exp(p\log(W^{(\bar{\pi},\bar{c})}_t))\\
=& w_0^p \exp\!\!\bigg\{ \!\int_0^t\!\! p(g^{\theta^P_s}(\bar{\pi}_s) \!-\!\bar{c}_s )ds \!+\! \int_0^t \!\! p\bar{\pi}_s^T\sigma^P_s dB_s\\
&+\! \int_0^t\!\!\!\int_{\R^d}\!p\log(1\!+\!\bar{\pi}_s^T z) \tilde{\nu}^P(dz,ds)\!\bigg\}\\
=&w_0^p \frac{dQ^{(\bar{\pi},\theta^P)}}{dP}(t) \exp\bigg( \int_0^t p(g^{\theta^P_s}(\bar{\pi}_s)-\bar{c}_s) ds\bigg).
\end{aligned}
\end{equation}
Substitute (\ref{exp of W^p}) into the objective function and we obtain the final result:
\begin{equation}\nonumber
\begin{aligned}
&J((\bar{\pi}, \bar{c}),P) &&\\
=&\frac{1}{p} \E\bigg[\int_0^T \bar{c}_t^p (W_t^{(\bar{\pi},\bar{c})})^p  dt + (W_T^{(\bar{\pi},\bar{c})})^p \bigg]&&\\
=&\frac{w_0^p}{p} \E\bigg[\int_0^T \bar{c}_t^p \frac{dQ^{(\bar{\pi},\theta^P)}}{dP}(t) \exp\bigg(\! \int_0^t\!\! p(g^{\theta^P_s}(\bar{\pi}_s)  \!-\!\bar{c}_s) ds\bigg)   dt &&\\
&+ \frac{dQ^{(\bar{\pi},\theta^P)}}{dP}(T) \exp\bigg( \!\int_0^T\!\! p(g^{\theta^P_s}(\bar{\pi}_s)  \!-\!\bar{c}_s) ds\bigg)\bigg]&&\\
=&\frac{w_0^p}{p} \E^{Q^{(\bar{\pi},\theta^P)}}\bigg[\int_0^T\!\! \bar{c}_t^p  \exp\bigg(\! \int_0^t p(g^{\theta^P_s}(\bar{\pi}_s) \! -\!\bar{c}_s) ds\bigg)   dt &&\\
&+ \exp\bigg( \int_0^T\!\! p(g^{\theta^P_s}(\bar{\pi}_s)  \!-\!\bar{c}_s) ds\bigg)\bigg]&&\\
=&\frac{w_0^p}{p} \E^{Q^{(\bar{\pi},\theta^P)}}\bigg[\int_0^T\!\! \bar{c}_t^p  \exp\bigg(\! \int_0^t \!\! p(g^{\theta^P_s}(\bar{\pi}_s) \!-\!\bar{c}_s) ds\bigg)   dt &&\\
&+ 1+\int_0^T \!\!\exp\bigg(\! \int_0^t\!\! p(g^{\theta^P_s}(\bar{\pi}_s)\!-\!\bar{c}_s) ds\bigg)p(g^{\theta^P_t}(\bar{\pi}_t)  \!-\!\bar{c}_t )dt\bigg]&&\\
=&w_0^p \bigg(\frac{1}{p}+
\int_\Omega  G((\bar{\pi},\bar{c}),\theta^P) Q^{(\bar{\pi},\theta^P)}(d\omega)\bigg). &&\square
\end{aligned}
\end{equation}

\vskip 10pt
\textit{\textbf{Proof of Theorem \ref{thm_CRRA}}.}
$(({\pi}^*, {c}^*), \theta^*)$ is a saddle point of $G$ by Theorem \ref{exchange}, so we mainly verify the $((\tilde{\pi}^*, \tilde{c}^*), P^*)$ defined by conditions $(ii)$ and $(iii)$ is a saddle point of $J$.
We only show the details for power utility here, because the proof for power utility is more general than the logarithmic utility.

For any $(\bar{\pi},\bar{c})\in \mathfrak{A}$, using Lemma \ref{J=G}, we have
\begin{equation}\label{log_1}
\begin{aligned}
&\inf_{P\in\mathfrak{P} } J((\bar{\pi}, \bar{c}),P) \\
=& \frac{w_0^p}{p} + w_0^p\inf_{P\in\mathfrak{P} }
\int_\Omega  G((\bar{\pi}(\omega),\bar{c}(\omega)),\theta^P(\omega)) Q^{(\bar{\pi},\theta^P)}(d\omega)\\
\leq & \frac{w_0^p}{p} +w_0^p\inf_{P\in\mathfrak{P} }
\int_\Omega  \sup_{(\pi,c)\in\mathfrak{A}_0}
G((\pi,c),\theta^P(\omega)) Q^{(\pi,\theta^P)}(d\omega)\\
\leq & \frac{w_0^p}{p} +w_0^p\inf_{P\in\mathfrak{P}_0 }\int_\Omega  \sup_{(\pi,c)\in\mathfrak{A}_0}
G((\pi,c),\theta^P) Q^{(\pi,\theta^P)}(d\omega) \\
= & \frac{w_0^p}{p} +w_0^p\inf_{P\in\mathfrak{P}_0 } \sup_{(\pi,c)\in\mathfrak{A}_0}
G((\pi,c),\theta^P)\\
=& \frac{w_0^p}{p} +w_0^p\inf_{\theta \in\Theta } \sup_{(\pi,c)\in\mathfrak{A}_0}G((\pi,c),\theta)\\
=& \frac{w_0^p}{p} +w_0^pG((\pi^*,c^*),\theta^*),
\end{aligned}
\end{equation}
where the second equality holds because $\theta^P$ is independent of the probability space for a possible PII measure in $\mathfrak{P}_0$.
Because $\inf_{\theta \in\Theta } G((\pi^*,c^*),\theta)$ is constant and Lemma \ref{J=G} holds,
\begin{equation}\label{log_2}
\begin{aligned}
& \frac{w_0^p}{p} +w_0^p\inf_{\theta \in\Theta } G((\pi^*,c^*),\theta) \\
=& \frac{w_0^p}{p} +w_0^p\inf_{P\in\mathfrak{P} } \int_\Omega \inf_{\theta \in\Theta } G((\pi^*,c^*),\theta)Q^{(\tilde{\pi}^*,\theta^P)}(d\omega) \\
\leq & \frac{w_0^p}{p} +w_0^p\inf_{P\in\mathfrak{P} } \int_\Omega  G((\pi^*,c^*),\theta^P(\omega)) Q^{(\tilde{\pi}^*,\theta^P)}(d\omega) \\
=&\inf_{P\in\mathfrak{P} } J((\tilde{\pi}^*, \tilde{c}^*),P),
\end{aligned}
\end{equation}
where $(\tilde{\pi}^*, \tilde{c}^*)$ is the generated policy from $(\pi^*, c^*)$ according to Definition
\ref{def genertae strategy}. By (\ref{log_1}), (\ref{log_2}), noting $(\bar{\pi},\bar{c})$ is arbitrary and $((\pi^*,c^*),\theta^*)$ is $G$'s saddle point, we conclude that
\begin{equation}\nonumber
\begin{aligned}
&\sup_{(\bar{\pi},\bar{c})\in \mathfrak{A}}\inf_{P\in\mathfrak{P} } J((\bar{\pi}, \bar{c}),P)
\leq\frac{w_0^p}{p} +w_0^p G((\pi^*,c^*),\theta^*) \\
&\leq\inf_{P\in\mathfrak{P} } J((\tilde{\pi}^*, \tilde{c}^*),P) \leq \sup_{(\bar{\pi},\bar{c})\in \mathfrak{A}}\inf_{P\in\mathfrak{P} }  J((\bar{\pi}, \bar{c}),P),\\
\Rightarrow&\sup_{(\bar{\pi},\bar{c})\in \mathfrak{A}}\inf_{P\in\mathfrak{P} } J((\bar{\pi}, \bar{c}),P)
=\frac{w_0^p}{p} +w_0^p G((\pi^*,c^*),\theta^*).
\end{aligned}
\end{equation}

On the other hand,
\begin{equation}\label{log_3}
\begin{aligned}
&\frac{w_0^p}{p} +w_0^pG((\pi^*,c^*),\theta^*) \\
=&\frac{w_0^p}{p} +w_0^p\inf_{\theta \in\Theta } \sup_{(\pi,c)\in \mathfrak{A}_0}G((\pi,c),\theta) \\
=&\frac{w_0^p}{p} + w_0^p\inf_{P\in\mathfrak{P}_0  } \sup_{(\pi,c)\in \mathfrak{A}_0}G((\pi,c),\theta^P) \\
=&\frac{w_0^p}{p} + w_0^p\inf_{P\in\mathfrak{P}_0 } \sup_{(\bar{\pi},\bar{c})\in \mathfrak{A}}  \int_\Omega \sup_{(\pi,c)\in \mathfrak{A}_0}G((\pi,c),\theta^P) Q^{(\bar{\pi},\theta^P)}(d\omega)\\
\geq &\frac{w_0^p}{p} + w_0^p\inf_{P\in\mathfrak{P}_0 } \sup_{(\bar{\pi},\bar{c})\in \mathfrak{A}} \int_\Omega G((\bar{\pi}(\omega),\bar{c}(\omega)),\theta^P) Q^{(\bar{\pi},\theta^P)}(d\omega)\\
\geq &\frac{w_0^p}{p} + w_0^p\inf_{P\in\mathfrak{P} } \sup_{(\bar{\pi},\bar{c})\in \mathfrak{A}} \int_\Omega G((\bar{\pi}(\omega),\bar{c}(\omega)),\theta^P(\omega)) Q^{(\bar{\pi},\theta^P)}(d\omega)\\
=&\inf_{P\in\mathfrak{P} } \sup_{(\bar{\pi},\bar{c})\in \mathfrak{A}} J((\bar{\pi}, \bar{c}),P).
\end{aligned}
\end{equation}
The second equality holds because there is a bijection between $\mathfrak{P}_0$ and $\Theta$. The third holds because $\sup_{(\pi,c)\in \mathfrak{A}_0}G((\pi,c),\theta^P)$ is independent of the probability space for any possible PII measure $P\in\mathfrak{P}_0$. The last equality comes from Lemma \ref{J=G}. The first inequality holds because $(\bar{\pi}(\omega),\bar{c}(\omega))\in\mathfrak{A}_0$ almost surely. The second inequality holds since $\mathfrak{P}$ contains $\mathfrak{P}_0$.
By (\ref{log_2}), (\ref{log_3}), and the property of saddle point, we obtain
\begin{equation}\nonumber
\qquad\qquad
\begin{aligned}
&\inf_{P\in\mathfrak{P} } \sup_{(\bar{\pi},\bar{c})\in \mathfrak{A}} J((\bar{\pi}, \bar{c}),P)
\leq\frac{w_0^p}{p} + w_0^p G((\pi^*,c^*),\theta^*) \\
&\leq \inf_{P\in\mathfrak{P} }  J((\tilde{\pi}^*, \tilde{c}^*),P)
\leq\inf_{P\in\mathfrak{P} } \sup_{(\bar{\pi},\bar{c})\in \mathfrak{A}} J((\bar{\pi}, \bar{c}),P),\\
\Rightarrow& \inf_{P\in\mathfrak{P} } \sup_{(\bar{\pi},\bar{c})\in \mathfrak{A}} J((\bar{\pi}, \bar{c}),P)
=\frac{w_0^p}{p} + w_0^p G((\pi^*,c^*),\theta^*).  \qquad\qquad\square
\end{aligned}
\end{equation}

\vskip 10pt
\textit{\textbf{Proof of Lemma \ref{J=G_CARA}}}.
By (\ref{sde of X}), (\ref{sde of W_PiD}), and It\^{o}'s formula for jump diffusion, $q_tW_t$ has an expression in form of integral.
\begin{equation}\nonumber
\begin{aligned}
&q_tW^{(\bar{\Pi},\bar{D})}_t = e^{\int_t^T-q_sds}W^{(\bar{\Pi},\bar{D})}_t
= q_0w_0 + \int_0^t (q_s\bar{\Pi}^T_sb^P_s - q_s\bar{D}_s ) ds \\
&+ \int_0^t q_s\bar{\Pi}^T_s \sigma^P_s dB_s +\int_0^t\int_{\R^d}q_s \bar{\Pi}_s^T z \tilde{\nu}^P(dz,ds).
\end{aligned}
\end{equation}
We set $q_t \!=\! (T-t+1)^{-1}$ for any $t\in[0,T]$ because it satisfies $q_t= e^{\int_t^T-q_sds}$ and ensures above equality. Then,
\begin{equation}\nonumber
\begin{aligned}
&\exp(-aq_tW^{(\bar{\Pi},\bar{D})}_t) \\
=&e^{-aq_0w_0} \exp\!\bigg\{\int_0^t (-aq_s\bar{\Pi}^T_sb^P_s +a q_s\bar{D}_s) ds - \int_0^t aq_s\bar{\Pi}^T_s \sigma^P_s dB_s \\
&-\int_0^t\int_{\R^d}aq_s \bar{\Pi}_s^T z \tilde{\nu}^P(dz,ds)\bigg\}\\
=&e^{-aq_0w_0} \exp\!\bigg\{ \!-\!\int_0^t aq_s\bar{\Pi}^T_s \sigma^P_s dB_s - \frac{1}{2}\int_0^t a^2q^2_s\bar{\Pi}^T_s \Sigma^P_s \bar{\Pi}_s ds \bigg\}\\
&\times\exp\!\bigg\{\!\!\int_0^t\!\!\int_{\R^d}\!-aq_s \bar{\Pi}_s^T z \nu^P(dz,ds)\!+\! \int_0^t\!\!\int_{\R^d}\!(1\!-\!e^{\!-aq_s \bar{\Pi}_s^T z}) F^P_t(dz)ds\!\bigg\} \\
&\times\exp\!\bigg\{\int_0^t \bigg(-aq_s\bar{\Pi}^T_sb^P_s +a q_s\bar{D}_s  + \frac{1}{2} a^2q^2_s\bar{\Pi}^T_s \Sigma^P_s \bar{\Pi}_s \\
&+ \int_{\R^d} (aq_s \bar{\Pi}_s^T z -1+e^{-aq_s \bar{\Pi}_s^T z}) F^P_t(dz) \bigg)ds\bigg\}\\
=&e^{-aq_0w_0}\frac{dQ^{(\bar{\Pi},\theta^P)}}{dP}(t)\exp\!\bigg(\int_0^t -aq_sh^{\theta^P_s}_s(\bar{\Pi}_s) +a q_s\bar{D}_s ds\bigg),
\end{aligned}
\end{equation}
and
\begin{equation}\nonumber
\begin{aligned}
&J((\bar{\Pi}, \bar{D}),P) \\
=&\E^P\bigg[\!\int_0^T \!\!\frac{\exp(\!-a (q_tW^{(\bar{\Pi},\bar{D})}_t+ \bar{D}_t))}{-a}dt \!+\!\frac{\exp(-a W^{(\bar{\Pi},\bar{D})}_T)}{-a}\!\bigg]&\\
=&\frac{1}{-a} \E^P\bigg[\int_0^T e^{-a\bar{D}_t}\exp(-a q_t W^{(\bar{\Pi},\bar{D})}_t)dt +\exp(-a q_T W^{(\bar{\Pi},\bar{D})}_T)\bigg]&\\
=&\frac{e^{-aq_0w_0}}{-a}\E^{Q^{(\bar{\Pi},\theta^P)}}\!\bigg[\!\!\int_0^T\!\! e^{-a\bar{D}_t}\exp\!\bigg(\!\int_0^t \!(-aq_sh^{\theta^P_s}_s(\bar{\Pi}_s) \!+\!a q_s\bar{D}_s ) ds\!\bigg)dt&\\
&+\exp\!\bigg(\int_0^T \!(-aq_sh^{\theta^P_s}_s(\bar{\Pi}_s) +a q_s\bar{D}_s) ds\bigg)\bigg]&\\
=&\frac{e^{-aq_0w_0}}{-a}\E^{Q^{(\bar{\Pi},\theta^P)}}\bigg[1+\int_0^T \exp\!\bigg(\int_0^t -aq_sh^{\theta^P_s}_s(\bar{\Pi}_s) +a q_s\bar{D}_s ds\bigg)&\\
&\times\bigg(e^{-a\bar{D}_t}-aq_th^{\theta^P_t}_t(\bar{\Pi}_t) +a q_t\bar{D}_t\bigg)  dt\bigg] &\\
=&e^{-aq_0w_0}\bigg(\frac{-1}{a}+ \int_{\Omega} H((\bar{\Pi},\bar{D}),\theta^P) Q^{(\bar{\Pi},\theta^P)}(d\omega)\bigg).
\end{aligned}
\end{equation}
$\hfill\square$

\vskip 10pt
\section{ Proofs of Theorems \ref{g_monotony} and \ref{exchange}}\label{app monotony and conclusion}
\vskip 10pt
\textit{\textbf{Proof of Theorem \ref{g_monotony}}.}
The proof is divided into three cases: $p=0$, $p<0$ and $p\in(0,1)$.
\vskip 5pt
\textit{Case 1: $p=0$.}
For logarithmic utility, $g^{\theta_t}(\pi_t)$ and $c_t$ are separate in the expression of $G$, i.e.,
\begin{equation}\nonumber
\begin{aligned}
G(\pi,c,\theta)
=&\!\!\int_0^T \!\bigg(\!\! \int_0^t (g^{\theta_s}(\pi_s) \!-\!c_s) ds \!+\!\!g^{\theta_t}(\pi_t) \!-\!c_t+U(c_t)\bigg)dt\\
=&\!\!\int_0^T \!\!\!\bigg(\!\!\int_0^t g^{\theta_s}(\pi_s) ds+\!\!g^{\theta_t}(\pi_t)\bigg)dt+ \!\!\int_0^T \!\!\!\bigg(\!\!\! -\!\! \int_0^t \!c_s ds  \!-\!c_t+U(c_t)\bigg)dt,
\end{aligned}
\end{equation}
thus,
\begin{equation}\nonumber
\begin{aligned}
&(\pi, \theta) \succcurlyeq (\hat{\pi}, \hat{\theta})\\
\Rightarrow & \int_0^t g^{\theta_s}(\pi_s) ds+\!\!g^{\theta_t}(\pi_t)\geq \int_0^t g^{\hat{\theta}_s}(\hat{\pi}_s) ds+\!\!g^{\hat{\theta}_t}(\hat{\pi}_t),~\forall t\in[0,T] \\
\Rightarrow& G(\pi,c,\theta)\geq G(\hat{\pi},c,\hat{\theta}).
\end{aligned}
\end{equation}
\vskip 5pt
\textit{Case 2: $p<0$.} For any $t\in[0,T]$, let $\bar{g}_t \triangleq g^{\theta_t}(\pi_t)-g^{\hat{\theta}_t}(\hat{\pi}_t)$, thus $\bar{g}_t\geq0$ according to $(\pi, \theta) \succcurlyeq (\hat{\pi}, \hat{\theta})$. We use variational method here. For any $\delta\in[0,1]$, let
\begin{equation}\nonumber
\begin{aligned}
&g^\delta_t  \triangleq g^{\hat{\theta}_t}(\hat{\pi}_t) + \delta \bar{g}_t, ~\forall t\in[0,T], \\
&f(\delta) \triangleq  \exp\bigg( \!\!\int_0^t \!p(g^\delta_s \!-\!c_s) ds\!\bigg) \bigg(\!g^\delta_t \!-\!c_t\!+\!U(c_t)\! \bigg).
\end{aligned}
\end{equation}
The derivative of $f$ is
\begin{equation}\nonumber
\begin{aligned}
\frac{d}{d\delta}f(\delta)
=\exp\bigg( \!\!\int_0^t \!p(g^\delta_s \!-\!c_s) ds\!\bigg) \bigg((g^\delta_t \!-\!c_t\!+\!U(c_t))\int_0^t \!p\bar{g}_s ds+\bar{g}_t\bigg).
\end{aligned}
\end{equation}
Using Assumption \ref{ass_CRRA} and noticing $U(1+ x)- U(1)- x\leq0~(\forall x>-1)$, we know for any $p<0$ and $t\in[0,T]$,
\begin{equation}\label{est_g_orig}
\begin{aligned}
&g^{\theta_t}(\pi_t)
\leq &\pi_t^T b_t \!-\! \frac{1\!-\!p}{2}\pi_t^T \Sigma_t \pi_t
\leq &\frac{b_t^T(\Sigma_t)^{-1}b_t}{2(1-p)}\leq &\frac{1-p}{-p}.
\end{aligned}
\end{equation}
Above estimation still holds for $g^{\hat{\theta}_t}(\hat{\pi}_t)$ as well as $g^\delta_t$. By $U(c_t)-c_t\leq \frac{1}{p}-1$, we get
\begin{equation}\nonumber
\begin{aligned}
&g^\delta_t -c_t+U(c_t) \leq 0,
\end{aligned}
\end{equation}
which leads to $\frac{d}{d\delta}f(\delta)\geq0$ directly. Thus $f(1)\geq f(0)$, i.e., $G(\pi,c,\theta)\geq G(\hat{\pi},c,\hat{\theta})$.
\vskip 5pt
\textit{Case 3: $p\in(0,1)$.}
We use the same notations as in case 2, while  more constraints are given as follows:
\begin{equation}\nonumber
\begin{aligned}
&0\leq c_t\leq 1, ~g^{\theta_t}(\pi_t), ~g^{\hat{\theta}_t}(\hat{\pi}_t)\geq0, ~\forall t\in[0,T].\\
\end{aligned}
\end{equation}
Because $g^\delta_t\geq0$ and $U(c_t)-c_t\geq0$ for any $t\in[0,T]$, we have
\begin{equation}\nonumber
\begin{aligned}
&g^\delta_t -c_t+U(c_t)\geq0
\Rightarrow ~& \frac{d}{d\delta}f(\delta)\geq0
\Rightarrow~ & G(\pi,c,\theta)\geq G(\hat{\pi},c,\hat{\theta}).
\end{aligned}
\end{equation}
$\hfill\square$

~\vskip 10pt
\textit{\textbf{Proof of Theorem \ref{exchange}}.}
It is sufficient to prove
\[
\min_{\theta \in \Theta}\sup_{\pi\in \mathfrak{A}^\pi_0}\sup_{c\in \mathfrak{A}^c_0} G(\pi,c,\theta) =\sup_{c\in \mathfrak{A}^c_0}\sup_{\pi\in \mathfrak{A}^\pi_0} \min_{\theta \in \Theta}G(\pi,c,\theta)= G(\pi^*,c^*,\theta^*),
\]
where $\pi^*$ and $\theta^*$ are selected in Theorem \ref{measurable selector} and $c^*$ is defined by condition $(ii)$ in Theorem \ref{exchange}. To do this, we calculate
\[
\min_{\theta \in \Theta}\sup_{\pi\in \mathfrak{A}^\pi_0}\sup_{c\in \mathfrak{A}^c_0} G(\pi,c,\theta) \quad\text{   and   }\quad \sup_{c\in \mathfrak{A}^c_0}\sup_{\pi\in \mathfrak{A}^\pi_0} \min_{\theta \in \Theta}G(\pi,c,\theta),
\]
respectively. We firstly give two notations. For any  $\pi\in\mathfrak{A}^\pi_0$,  $\theta^{\min}(\pi)$ is a PII triplet that satisfies
\[
   g^{\theta^{\min}(\pi)_t}(\pi_t)= \min_{\theta_t \in \Theta_t} g^{\theta_t}(\pi_t),~\forall t\in[0,T].
\]
For any $\theta\in\Theta$, $\pi^{\max}(\theta)$ is an investment policy that satisfies
\[
g^{\theta_t}( \pi^{\max}(\theta)_t ) =  \max_{\pi_t \in \mathcal{O}_t} g^{\theta_t}(\pi_t),~\forall t\in[0,T].
\]
They both exist according to Lemmas \ref{existence of theta} and \ref{existence of pi}.
By the property of saddle point,
\begin{equation}\label{sadddle g}
\min_{\theta\in\Theta}g^{\theta_t}(\pi^{\max}(\theta)_t) = \max_{\pi\in\mathfrak{A}^\pi_0}g^{\theta^{\min}(\pi)_t}(\pi_t) =   g^{\theta^*_t}(\pi^*_t),~\forall t\in[0,T].
\end{equation}

\vskip 10pt
\textit{Step 1:} Calculating $\min\limits_{\theta \in \Theta}\sup\limits_{\pi\in \mathfrak{A}^\pi_0}\sup\limits_{c\in \mathfrak{A}^c_0} G(\pi,c,\theta)$.

It is easy to verify that the $G(\pi,c^*,\theta)$ in Eq. \eqref{optm G with c*} is increasing in the kernel order. Thus, following equalities hold by using Theorem \ref{optimal c} and Eq. \eqref{sadddle g}.
\begin{equation}\nonumber
\!\!\begin{aligned}
&\min_{\theta \in \Theta}\sup_{\pi\in \mathfrak{A}^\pi_0}\sup_{c\in \mathfrak{A}^c_0}G(\pi,c,\theta) \\
=&\min_{\theta \in \Theta} \frac{1}{p}\!\left\{\!\exp\!\left(\int_0^T\!\!\!\frac{p g^{\theta_u}(\pi^{\max}(\theta)_u)}{1-p}du\!\right)\!\!+\!\! \int_0^T\!\!\!\! \exp\!\left(\!\int_0^s  \frac{p g^{\theta_u}(\pi^{\max}(\theta)_u)}{1-p}du\!\right)ds\!\right\}^{\!1\!-\!p}\!\!-\!\!\frac{1}{p},\\
=&\frac{1}{p}\!\left\{\!\exp\!\left(\int_0^T\!\!\!\frac{p g^{\theta^*_u}(\pi^*_u)}{1-p}du\!\right)\!\!+\!\! \int_0^T\!\! \exp\!\left(\!\int_0^s  \frac{p g^{\theta^*_u}(\pi^*_u)}{1-p}du\!\right)ds\!\right\}^{\!1\!-\!p}\!\!-\!\!\frac{1}{p}, \qquad\text{for } p\neq0,
\end{aligned}
\end{equation}
and
\begin{equation}\nonumber
\!\!\begin{aligned}
&\min_{\theta \in \Theta}\sup_{\pi\in \mathfrak{A}^\pi_0}\sup_{c\in \mathfrak{A}^c_0}G(\pi,c,\theta) \\
=&\min_{\theta \in \Theta}\!\!\int_0^T \!\bigg(\!\! \int_0^t g^{\theta_s}(\pi^{\max}(\theta)_s)  ds \!+\!\!g^{\theta_t}(\pi^{\max}(\theta)_t)\bigg)dt-(T+1)\log(T+1),\\
=&\!\!\int_0^T \!\bigg(\!\! \int_0^t g^{\theta^*_s}(\pi^*_s)  ds \!+\!\!g^{\theta^*_t}(\pi^*_t)\bigg)dt-(T+1)\log(T+1),\qquad\text{for } p=0.
\end{aligned}
\end{equation}

\vskip 5pt

\textit{Step 2:} Calculating $\sup\limits_{c\in \mathfrak{A}^c_0}\sup\limits_{\pi\in \mathfrak{A}^\pi_0} \min\limits_{\theta \in \Theta}G(\pi,c,\theta)$.

The calculation for $p\leq0$ is straightforward as in Step 1, while the case of $p>0$ is complicated.
\vskip 5pt
\textit{Case 1: $p\leq0$.}
By Theorem \ref{g_monotony}, Eq. \eqref{sadddle g}, and Theorem \ref{optimal c},
\begin{equation}\nonumber
\!\!\begin{aligned}
&\sup_{c\in \mathfrak{A}^c_0}\sup_{\pi\in \mathfrak{A}^\pi_0} \min_{\theta \in \Theta}G(\pi,c,\theta)
=\sup_{c\in \mathfrak{A}^c_0}\sup_{\pi\in \mathfrak{A}^\pi_0} G(\pi,c,\theta^{\min}(\pi))\\
=&\sup_{c\in \mathfrak{A}^c_0} G(\pi^*,c,\theta^*)
= G(\pi^*,c^*,\theta^*).
\end{aligned}
\end{equation}

\textit{Case 2: $p>0$.}
Firstly, we assert the value of $\sup\limits_{c\in \mathfrak{A}^c_0}\sup\limits_{\pi\in \mathfrak{A}^\pi_0} \min\limits_{\theta \in \Theta}G(\pi,c,\theta)$ does not change if we restrict the sets $\mathfrak{A}^c_0\times \mathfrak{A}^\pi_0\times \Theta$ as
\begin{equation}\label{rest of control}
\begin{aligned}
\{(\pi,c,\theta)\in \mathfrak{A}^c_0\times \mathfrak{A}^\pi_0\times \Theta: c_t\in[0,1],g^{\theta_t}(\pi_t)\geq0, \forall t\in[0,T] \}.
\end{aligned}
\end{equation}
For any $\pi\in \mathfrak{A}^\pi_0$ and $\theta \in \Theta$ satisfying
\[
\{t\in[0,T]:g^{\theta_t}(\pi_t)<0\} \neq \varnothing,
\]
define
\begin{equation}\nonumber
\check{\pi}_t \triangleq\left\{
\begin{aligned}
& 0, &&g^{\theta_t}(\pi_t)<0,\\
& \pi_t , &&g^{\theta_t}(\pi_t)\geq0,
\end{aligned}
\right.
\end{equation}
then $\check{\pi}\in\mathfrak{A}^\pi_0$ and $(\check{\pi}, \theta)\succcurlyeq(\pi, \theta)$. By Theorem \ref{optimal c},
\begin{equation}\nonumber
\begin{aligned}
&\max_{c\in \mathfrak{A}^c_0}G(\pi,c,\theta)\\
=& \frac{1}{p}\!\left\{\!\exp\!\left(\int_0^T\!\!\!\frac{p g^{\theta_u}(\pi_u)}{1-p}du\!\right)\!\!+\!\! \int_0^T\!\! \exp\!\left(\!\int_0^s  \frac{p g^{\theta_u}(\pi_u)}{1-p}du\!\right)ds\!\right\}^{\!1\!-\!p}\!\!-\!\!\frac{1}{p}\\
\leq& \frac{1}{p}\!\left\{\!\exp\!\left(\int_0^T\!\!\!\frac{p g^{\theta_u}(\check{\pi}_u)}{1-p}du\!\right)\!\!+\!\! \int_0^T\!\! \exp\!\left(\!\int_0^s  \frac{p g^{\theta_u}(\check{\pi}_u)}{1-p}du\!\right)ds\!\right\}^{\!1\!-\!p}\!\!-\!\!\frac{1}{p} \\
=&\max_{c\in \mathfrak{A}^c_0}G(\check{\pi},c,\theta),
\end{aligned}
\end{equation}
because $\max\limits_{c\in \mathfrak{A}^c_0}G(\pi,c,\theta)$ is increasing in the kernel order.
Therefore, we can restrict $\mathfrak{A}^c_0\times \mathfrak{A}^\pi_0\times \Theta$  as
\[
\{(\pi,c,\theta)\in \mathfrak{A}_0\times \Theta: g^{\theta_t}(\pi_t)\geq0, \forall t\in[0,T] \}.
\]
According to $g^{\theta_t}(\pi_t)\geq0$, Theorem \ref{optimal c}, and \eqref{est_g_orig} for negative $p$, the optimal $c^*$ must take value in $[0,1]$, thus we can further restrict the control set as \eqref{rest of control}.
Finally, we calculate $\sup\limits_{c\in \mathfrak{A}^c_0}\sup\limits_{\pi\in \mathfrak{A}^\pi_0} \min\limits_{\theta \in \Theta}G(\pi,c,\theta)$ on the restricted control set \eqref{rest of control}.
\begin{equation}\nonumber
\qquad\qquad\begin{aligned}
&\sup_{c\in \mathfrak{A}^c_0}\sup_{\pi\in \mathfrak{A}^\pi_0} \min_{\theta \in \Theta}G(\pi,c,\theta)
=\sup_{c\in \mathfrak{A}^c_0}\sup_{\pi\in \mathfrak{A}^\pi_0} G(\pi,c,\theta^{\min}(\pi))&\\
=&\sup_{c\in \mathfrak{A}^c_0}G(\pi^*,c,\theta^*)
=G(\pi^*,c^*,\theta^*),
\end{aligned}
\end{equation}
by Theorem \ref{optimal c}, Theorem \ref{g_monotony}, and Eq. \eqref{sadddle g}.
$\hfill\square$

\vskip 15pt
\section{Proof of Theorem \ref{measurable selector} }\label{app measurable selector}
Recalling the integral in the expression of the local kernel (cf. Definition \ref{def g CRRA}), we notice its integrand contains $U(1\!+\! x^T z)$, whose value or derivative approaches infinite when $x^T z$ approaches $-1$. Thus, to avoid the singularity of local kernel near the boundary of $\mathcal{O}_t$, we consider the following closed subsets of $\mathcal{O}_t$ for any $t\in[0,T]$ and $n\in\mathds{N}$:
\[
\mathcal{O}_t^n  \triangleq \left\{x\in\R^d : x^Tz\geq-1+\frac{1}{n}, \forall z\in \mathbf{S}_t  \right\},
\]
thus $\mathcal{O}_t = \cup_{n=1}^\infty \mathcal{O}_t^n$.
Since $\mathcal{O}_t$ is bounded by $\kappa_t$, $ \mathcal{O}^n_t$ is bounded too. We only write the proof for $p<0$ here because it is similar and easier for $p\in[0,1)$.

\begin{Lemma}\label{continuous of I}
For any $t\in[0,T]$ and $n\in\mathds{N}$, define
\begin{equation}\nonumber
\mathcal{I}(z,x)\triangleq \left\{
\begin{aligned}
&\frac{p^{-1}((1\!+\!   x^T z)^p-1)\!-\! x^T z}{|z|^{2-\varepsilon} \wedge1}, &&|z|>0,\\
&0, &&z=0,
\end{aligned}
\right.
\end{equation}
then $\mathcal{I}$ is continuous and there exists a constant $C(n,\kappa_t)$ such that
\begin{equation}\label{Lip of I wrt z and x}
\begin{aligned}
&\sup_{ x  \in\mathcal{O}^n_t } \sup_{z\neq \hat{z}\in \mathbf{S}_t }  |\mathcal{I}(z,x) - \mathcal{I}(\hat{z},x)|  \leq C(n,\kappa_t)  |z- \hat{z}|^{\varepsilon\wedge1} ,
\end{aligned}
\end{equation}
\begin{equation}\label{Lip of I wrt x}
\begin{aligned}
&\sup_{z \in \mathbf{S}_t }  \sup_{ x\neq\hat{x} \in\mathcal{O}^n_t }  |\mathcal{I}(z,x) - \mathcal{I}(z,\hat{x})|  \leq C(n,\kappa_t)  |x- \hat{x}|,
\end{aligned}
\end{equation}
and
\begin{equation}\label{bounded of I}
\begin{aligned}
\sup_{(z,x)\in \mathbf{S}_t\times\mathcal{O}^n_t } |\mathcal{I}(z,x)|\leq C(n,\kappa_t).
\end{aligned}
\end{equation}
\end{Lemma}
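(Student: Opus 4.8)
The plan is to view $\mathcal{I}$ as a function on $\{0<|z|\le\kappa_t\}\times\mathcal{O}^n_t$ and to reduce all four assertions to elementary one–variable estimates for the profile $\phi(w)\triangleq p^{-1}\bigl((1+w)^p-1\bigr)-w$. Since $z\in\mathbf{S}_t$ and $x\in\mathcal{O}^n_t$ force $1+x^Tz\ge\tfrac1n$ and $|x|,|z|\le\kappa_t$, the argument $w=x^Tz$ always lies in the fixed compact interval $[-1+\tfrac1n,\kappa_t^2]$, on which $\phi\in C^2$ with $\phi(0)=\phi'(0)=0$; writing $C_1(n)$ for an upper bound of $|\phi''|$ there (for $p<0$ one may take $C_1(n)=|p-1|\,n^{2-p}$), Taylor's theorem gives $|\phi(w)|\le C_1(n)\,w^2$ and $|\phi'(w)|\le C_1(n)|w|$ on that interval. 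I carry out the argument for $p<0$; the ranges $p\in[0,1)$ are handled identically, with the obvious replacement of $\phi$ and its constants.

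Boundedness \eqref{bounded of I} and continuity are then immediate. For $|z|\le1$, $|\mathcal{I}(z,x)|=|\phi(x^Tz)|/|z|^{2-\varepsilon}\le C_1\kappa_t^2\,|z|^{\varepsilon}\le C_1\kappa_t^2$, while for $|z|>1$ the denominator equals $1$ and $|\mathcal{I}(z,x)|\le\sup_{w\in[-1+1/n,\kappa_t^2]}|\phi(w)|$; the same bound shows $\mathcal{I}(z,x)\to0=\mathcal{I}(0,x)$ as $z\to0$ uniformly in $x$, and off $\{z=0\}$ joint continuity is clear because the two branches of the definition agree on $\{|z|=1\}$. For the $x$–Lipschitz bound \eqref{Lip of I wrt x}, fix $z$ and differentiate: $\nabla_x\mathcal{I}(z,x)=\phi'(x^Tz)\,z/(|z|^{2-\varepsilon}\wedge1)$, whose norm is at most $C_1\kappa_t|z|^{\varepsilon}\le C_1\kappa_t$ for $|z|\le1$ and at most $C_1\kappa_t^3$ for $|z|>1$; since $\mathcal{O}^n_t$ is convex and $1+x^Tz>0$ throughout it, the mean value inequality along the segment $[x,\hat x]$ yields \eqref{Lip of I wrt x}.

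The real content is the $z$–Hölder bound \eqref{Lip of I wrt z and x}, and this is precisely where $\varepsilon>0$ is indispensable: the angular factor of $\mathcal{I}$ (a $\cos^2\vartheta$ term) is discontinuous at the origin, and only the weight $|z|^{\varepsilon}$ controls it. I would differentiate in $z$: for $0<|z|\le1$, $\nabla_z\mathcal{I}(z,x)=\phi'(x^Tz)\,|z|^{\varepsilon-2}x-(2-\varepsilon)\phi(x^Tz)\,|z|^{\varepsilon-4}z$, so the bounds on $\phi,\phi'$ give $|\nabla_z\mathcal{I}(z,x)|\le C(n,\kappa_t)\,|z|^{\varepsilon-1}$, uniformly in $x\in\mathcal{O}^n_t$; for $1<|z|\le\kappa_t$ one has instead the plain bound $|\nabla_z\mathcal{I}|=|\phi'(x^Tz)|\,|x|\le C(n,\kappa_t)$. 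The segment joining two points $z,\hat z\in\mathbf{S}_t$ stays in $\{w:1+x^Tw\ge\tfrac1n\}$ by convexity, so $\mathcal{I}(\cdot,x)$ is $C^1$ along it except possibly at the origin, and I then invoke the elementary fact that a function with $|\nabla f(z)|\le C|z|^{\alpha-1}$ near $0$ and bounded gradient on the rest of a bounded set is $\alpha$–Hölder there, with $\alpha=\varepsilon\wedge1$: one splits according to whether $|z-\hat z|\ge\tfrac12\max(|z|,|\hat z|)$ — where \eqref{bounded of I} together with $|\mathcal{I}(z,x)|\le C\kappa_t^2|z|^{\varepsilon}$ closes it at once — or not, in which case the segment remains in $\{|w|>\tfrac12\max(|z|,|\hat z|)\}$ and integrating the gradient bound there gives the claim; for $\varepsilon\ge1$ the gradient is in fact bounded on all of $\{0<|z|\le\kappa_t\}$, so $\mathcal{I}(\cdot,x)$ is simply Lipschitz and $\alpha=1=\varepsilon\wedge1$. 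Taking $C(n,\kappa_t)$ to be the maximum of the finitely many constants produced gives the single constant in the statement. Apart from organizing this singular–gradient estimate near the origin, everything reduces to routine Taylor and mean–value bookkeeping.
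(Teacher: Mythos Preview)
Your proof is correct. Both you and the paper rest on the same underlying ideas—Taylor control of the numerator near the origin and direct derivative bounds away from it—but the organization differs. The paper first replaces $\mathbf{S}_t$ by $\mathrm{Conv}(\mathbf{S}_t\cup\{0\})$, then on a small ball $\mathcal{N}_\delta=\{|z|\le(2\kappa_t)^{-1}\}$ writes out the third-order Taylor expansion of the numerator and recasts $\mathcal{I}$ in angular form, namely $\tfrac{p-1}{2}|x|^2|z|^\varepsilon\cos^2\vartheta$ plus a Lipschitz remainder, arguing the $(\varepsilon\wedge1)$-H\"older bound term by term; on the annuli $\{\delta\le|z|\le1\}$ and $\{|z|\ge1\}$ it bounds each $\partial\mathcal{I}/\partial z_i$ directly. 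Your route is more streamlined: you abstract out the one-variable profile $\phi$, use only the second-order bounds $|\phi(w)|\le C_1w^2$ and $|\phi'(w)|\le C_1|w|$ valid on the whole interval $[-1+\tfrac1n,\kappa_t^2]$, deduce the single gradient estimate $|\nabla_z\mathcal{I}|\le C|z|^{\varepsilon-1}$ uniformly in $x$, and then invoke the general ``singular gradient $\Rightarrow$ H\"older'' mechanism via the dyadic case split on $|z-\hat z|$ versus $\tfrac12\max(|z|,|\hat z|)$. This buys you a uniform treatment of all three $|z|$-regimes and avoids the somewhat ad hoc angular decomposition (whose H\"older continuity the paper asserts rather tersely); the paper's approach is more explicit about the leading behaviour of $\mathcal{I}$ near the origin but is otherwise equivalent.
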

\begin{proof}
We assume $\mathbf{S}_t$ is a closed and convex set containing the origin, because we can substitute $\mathrm{Conv}(\mathbf{S}_t\cup{0})$ for $\mathbf{S}_t$ without changing $\mathcal{O}^n_t$.
The continuity on $(\mathbf{S}_t\setminus\{0\} )\times \mathcal{O}^n_t$ is obvious, so we only need to verify
\[
\lim_{|z|\rightarrow 0 }\mathcal{I}(z,x)=0,~\forall x\in \mathcal{O}^n_t,
\]
which is ensured by the following two results:
\begin{equation}\nonumber
\begin{aligned}
&~p^{-1}((1\!+\!   x^T z)^p-1)\!-\! x^T z~\leq~0
\Rightarrow \limsup_{|z|\rightarrow 0}\mathcal{I}(z,x)\leq~0,\\
&\liminf_{|z|\rightarrow 0}\mathcal{I}(z,x)\geq \liminf_{|z|\rightarrow 0} \mathcal{I}(\frac{-x}{|x|}|z|,x) = \liminf_{|z|\rightarrow 0} \frac{p^{-1}((1\!+\! |x| |z|)^p\!-\!1)\!-\! |x| |z|}{|z|^{2-\varepsilon} \wedge1}= 0.
\end{aligned}
\end{equation}
\vskip 5pt
Thus, $\mathcal{I}$ is continuous and bounded by the compactness of $\mathbf{S}_t\times\mathcal{O}^n_t$, which is equivalent to \eqref{bounded of I}. Moreover, we can show $(1+x^Tz)^{p-1}-1$ is continuous and bounded too. For convenience, we assume they are bounded by a common constant $C(n,\kappa_t)$ depending on $n$ and $\kappa_t$.

For verifying the $(\varepsilon\wedge1)$-H\"{o}lder continuity of $\mathcal{I}$ with respect to $z$, we firstly study its property on a neighborhood of origin:
\[
\mathcal{N}_\delta = \{ (z,x)\in \mathbf{S}_t\times\mathcal{O}^n_t : |z|\leq \delta \}
\]
for $\delta=(2\kappa_t)^{-1}$.
By Taylor's theorem with Lagrange remainder, for any $(z,x)\in \mathcal{N}_\delta$, we have
\begin{equation}\nonumber
\begin{aligned}
&p^{-1} (1\!+\!   x^T z)^p-1)\!-\! x^T z = \frac{p-1}{2}(x^T z)^2 + \frac{(p-1)(p-2)}{3!}(1+ \xi_\delta)^{p-3} ( x^T z)^3\\
\Rightarrow& \mathcal{I}(z,x) = \frac{p-1}{2}  |x|^2|z|^\varepsilon \cos(\vartheta)^2   + \frac{(p-1)(p-2)}{3!}(1+ \xi_\delta)^{p-3}|x|^3|z|^{(1+\varepsilon)}\cos(\vartheta)^3,
\end{aligned}
\end{equation}
where $\cos(\vartheta) = \frac{x^T z}{|x||z|}$ is the cosine of the angle between $x$ and $z$, and $\xi_\delta$ is a real number between $0$ and $x^T z$. Since $|x|^2  \cos(\vartheta)^2   \leq \kappa_t^2$, the first term is $(\varepsilon\wedge1)$-H\"{o}lder continuous with respect to $z$. Since
\[
|\xi_\delta|\leq \delta\kappa_t = 1/2\Rightarrow   |(1+ \xi_\delta)^{p-3}||x|^3 |\cos(\vartheta)^3|\leq (1/2)^{p-3}\kappa_t^3,
\]
the second term is Lipschitz continuous with respect to $z$. Thus, \eqref{Lip of I wrt z and x} holds for some $C(n,\kappa_t)$  over $\mathcal{N}_\delta$.

Secondly, we study the property on $\{ (z,x)\in \mathbf{S}_t\times\mathcal{O}^n_t : \delta \leq  |z|\leq 1 \}$ and $\{ (z,x)\in \mathbf{S}_t\times\mathcal{O}^n_t : 1 \leq  |z| \}$. Because
\begin{equation}\nonumber
\begin{aligned}
\bigg|\frac{\partial \mathcal{I}}{\partial z_i}\bigg|
&\leq    \frac{|(1+x^Tz)^{p-1}-1|}{|z|^{2-\varepsilon}}|x_i| +   |\mathcal{I}(z,x)|  \frac{(2-\varepsilon)|z_i|}{|z|^2}  \\
&\leq C(n,\kappa_t)[ (2\kappa_t)^{2-\varepsilon}\kappa_t + (2-\varepsilon)2\kappa_t], ~\forall \delta \leq  |z|\leq 1,
\end{aligned}
\end{equation}
and
\begin{equation}\nonumber
\begin{aligned}
\bigg|\frac{\partial \mathcal{I}}{\partial z_i}\bigg|
&\leq     |(1+x^Tz)^{p-1}-1| |x_i|   \leq C(n,\kappa_t) \kappa_t, ~\forall 1\leq |z|\leq \kappa_t,
\end{aligned}
\end{equation}
$\mathcal{I}$ is differentiable with bounded derivative on above tow domains. Thus $\mathcal{I}$ is Lipschitz continuous with respect to $z$ over these two compact domains. Recalling the result on domain $\mathcal{N}_\delta$, we conclude that $\mathcal{I}$ is $(\varepsilon\wedge1)$-H\"{o}lder continuous with respect to $z$ over $\mathbf{S}_t\times\mathcal{O}^n_t$ and the H\"{o}lder constant is independent of $x$, which is equivalent to \eqref{Lip of I wrt z and x} for a large enough $ C(n,\kappa_t) $.
Similarly, the Lipschitz continuity of  $\mathcal{I}$ with respect to $x$ can be proved, which is omitted here.
\end{proof}

\begin{Lemma}\label{continuity of theta}
For any $t\in[0,T]$ and $n\in\mathds{N}$, the map
\[
(y,M,\mu)\mapsto \sup_{x\in\mathcal{O}^n_t} g_t^{(y,M,\mu)}(x)
\]
is continuous on $\Theta_t$.
\end{Lemma}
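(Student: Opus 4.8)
The plan is to prove a uniform Lipschitz estimate in the triplet variable. Concretely, I will show that there is a constant $C'(n,\kappa_t)$, depending only on $n$ and $\kappa_t$, with
\begin{equation}\nonumber
\sup_{x\in\mathcal{O}_t^n}\bigl|g_t^{\theta}(x)-g_t^{\hat\theta}(x)\bigr|\ \le\ C'(n,\kappa_t)\,d_{\mathcal{C}}^{\varepsilon}(\theta,\hat\theta),\qquad \theta,\hat\theta\in\Theta_t .
\end{equation}
Granting this, the conclusion is immediate: $x\mapsto g_t^{\theta}(x)$ is continuous on the compact set $\mathcal{O}_t^n$ (its bilinear part trivially, its integral part by Lemma~\ref{continuous of I}), so $\sup_{x\in\mathcal{O}_t^n}g_t^{\theta}(x)$ is finite; and the elementary bound $|\sup_x f-\sup_x \hat f|\le\sup_x|f-\hat f|$ turns the display into $|\sup_x g_t^{\theta}(x)-\sup_x g_t^{\hat\theta}(x)|\le C'(n,\kappa_t)\,d_{\mathcal{C}}^{\varepsilon}(\theta,\hat\theta)$, so the map in question is Lipschitz, hence continuous, on $\Theta_t$.

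To get the estimate, decompose $g_t^{(y,M,\mu)}(x)=x^Ty-\tfrac{1-p}{2}x^TMx+\int_{\R^d}\mathcal{I}(z,x)\,(|z|^{2-\varepsilon}\wedge1)\,\mu(dz)$, using $U(1+x^Tz)-U(1)-x^Tz=\mathcal{I}(z,x)\,(|z|^{2-\varepsilon}\wedge1)$ with $\mathcal{I}$ as in Lemma~\ref{continuous of I} (and the analogous identity for $p=0$). For the first two terms, on $\mathcal{O}_t^n\subseteq\{x:|x|\le\kappa_t\}$ one has $|x^T(y-\hat y)|\le\kappa_t\,d_2(y,\hat y)$ and $\tfrac{1-p}{2}|x^T(M-\hat M)x|\le\tfrac{1-p}{2}\kappa_t^2\,d_2(M,\hat M)$, uniformly in $x$. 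For the jump term we may assume $\Theta_t$ is not a singleton, since otherwise the lemma is vacuous; then by the remark following Assumption~\ref{ass_2.1} every L\'evy measure occurring in $\Theta_t$ lies in $\mathcal{L}^{\varepsilon}$, so $\tilde\mu:=(|z|^{2-\varepsilon}\wedge1).\mu$ is a finite Borel measure carried by the bounded set $\mathbf{S}_t$, and $d_{\mathcal{L}}^{\varepsilon}(\mu,\hat\mu)=d_{BH}^{~\varepsilon}(\tilde\mu,\widetilde{\hat\mu})$ by definition of $d_{\mathcal{L}}^{\varepsilon}$.

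The core is to control $\bigl|\int\mathcal{I}(z,x)\tilde\mu(dz)-\int\mathcal{I}(z,x)\widetilde{\hat\mu}(dz)\bigr|$ uniformly over $x\in\mathcal{O}_t^n$. By Lemma~\ref{continuous of I}, for each such $x$ the function $z\mapsto\mathcal{I}(z,x)$ is bounded by $C(n,\kappa_t)$ on $\mathbf{S}_t$ and is $(\varepsilon\wedge1)$-H\"older there with H\"older constant $C(n,\kappa_t)$, both bounds independent of $x$. Extend $\mathcal{I}(\cdot,x)$ to a function $\bar{\mathcal{I}}_x\in C_b(\R^d)$ agreeing with $\mathcal{I}(\cdot,x)$ on $\mathbf{S}_t$ and still obeying $\sup_{z\ne\hat z}\bigl[|\bar{\mathcal{I}}_x(z)|\vee |z-\hat z|^{-(\varepsilon\wedge1)}|\bar{\mathcal{I}}_x(z)-\bar{\mathcal{I}}_x(\hat z)|\bigr]\le C(n,\kappa_t)$ (a McShane extension of the seminorm followed by truncation of the values does this). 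Assuming as we may that $C(n,\kappa_t)\ge1$, the function $\bar{\mathcal{I}}_x/C(n,\kappa_t)$ is then admissible in the definition of $d_{BH}^{~\varepsilon}$, and since $\bar{\mathcal{I}}_x=\mathcal{I}(\cdot,x)$ on $\mathrm{supp}(\tilde\mu)\cup\mathrm{supp}(\widetilde{\hat\mu})\subseteq\mathbf{S}_t$ we obtain
\begin{equation}\nonumber
\Bigl|\int\mathcal{I}(z,x)\,\tilde\mu(dz)-\int\mathcal{I}(z,x)\,\widetilde{\hat\mu}(dz)\Bigr|\ \le\ C(n,\kappa_t)\,d_{BH}^{~\varepsilon}(\tilde\mu,\widetilde{\hat\mu})\ =\ C(n,\kappa_t)\,d_{\mathcal{L}}^{\varepsilon}(\mu,\hat\mu).
\end{equation}
Adding the three pieces and setting $C'(n,\kappa_t):=3\bigl(\kappa_t\vee\tfrac{1-p}{2}\kappa_t^2\vee C(n,\kappa_t)\bigr)$ yields the claimed estimate.

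The main obstacle is this last step: continuity of the integral functional in $\mu$ with respect to $d_{\mathcal{L}}^{\varepsilon}$, uniformly in $x\in\mathcal{O}_t^n$. It rests entirely on Lemma~\ref{continuous of I} and on the standing assumption $\varepsilon>0$: the uniform $(\varepsilon\wedge1)$-H\"older regularity of $\mathcal{I}(\cdot,x)$ in $z$ is precisely what makes $\mathcal{I}(\cdot,x)$ a legitimate test function for the bounded-H\"older metric $d_{BH}^{~\varepsilon}$, and the computation displayed in the excerpt shows that this breaks down when $\varepsilon=0$. Everything else reduces to elementary estimates on the polynomial part and to standard facts about H\"older extensions.
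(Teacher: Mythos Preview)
Your argument is correct and follows essentially the same route as the paper: establish a uniform Lipschitz bound $\sup_{x\in\mathcal{O}^n_t}|g_t^{\theta}(x)-g_t^{\hat\theta}(x)|\le C\,d_{\mathcal{C}}^{\varepsilon}(\theta,\hat\theta)$ by treating the linear, quadratic, and jump parts separately, invoking Lemma~\ref{continuous of I} for the latter, and then pass to the supremum. The only substantive addition in your write-up is the McShane extension step: the paper tacitly plugs $\mathcal{I}(\cdot,x)/C(n,\kappa_t)$ into the definition of $d_{BH}^{~\varepsilon}$, whereas Lemma~\ref{continuous of I} furnishes the bound and the $(\varepsilon\wedge1)$-H\"older estimate only on $\mathbf{S}_t$, not on all of $\R^d$; your extension-then-truncate argument closes that small gap cleanly, and the singleton-$\Theta_t$ remark is a harmless aside.
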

\begin{proof}
For any $(y, M, \mu), (\hat{y}, \hat{M}, \hat{\mu})\in \Theta_t$ and $x\in\mathcal{O}^n_t$,
\begin{equation}\nonumber
\begin{aligned}
&\bigg|x^Ty \!-\! \frac{1\!-\!p}{2}x^T M x - x^T\hat{y} \!+\! \frac{1\!-\!p}{2}x^T \hat{M} x\bigg|\\
\leq& |x^T(y -\hat{y})| + \frac{1\!-\!p}{2}|x^T (M-\hat{M}) x|\\
\leq& \big(\kappa_t + \frac{1\!-\!p}{2}\kappa_t^{2}\big) ( d_2( y, \hat{y})\vee d_2( M, \hat{M})  ).
\end{aligned}
\end{equation}
By Lemma \ref{continuous of I}, we have
\begin{equation}\nonumber
\begin{aligned}
&\bigg|\!\int_{\R^d}\!\!\! \left( \frac{(1\!\!+\!\! x^T z\!)^p\!\!-\!\!  1}{p}\!-\! x^T z\!\right) \!\mu(dz)
- \int_{\R^d}\!\!\! \left( \frac{(1\!\!+\!\! x^T z\!)^p\!\!-\!\!  1}{p}\!-\! x^T z\!\right) \!\hat{\mu}(dz)\bigg|\\
=&\bigg|\int_{ \mathbf{S}_t}\!\! \mathcal{I}(z,x) (|z|^{2-\varepsilon}\wedge1) \mu(dz) -\int_{ \mathbf{S}_t}\!\! \mathcal{I}(z,x) (|z|^{2-\varepsilon}\wedge1) \hat{\mu}(dz)\bigg|\\
\leq& C(n,\kappa_t)  d_{BH}^{~\varepsilon}( |z|^{2-\varepsilon}\wedge1.\mu, |z|^{2-\varepsilon}\wedge1.\hat{\mu})\\
\leq& C(n,\kappa_t)    d^{\varepsilon}_{\mathcal{L}}(\mu,\hat{\mu}).
\end{aligned}
\end{equation}
Therefore, for fixed $t$ and $n$, the family of functions
\[
\{(y,M,\mu)\mapsto g_t^{(y,M,\mu)}(x)\}_{x\in\mathcal{O}^n_t}
\]
is equicontinuous and uniformly Lipschitz continuous with Lipschitz constant
\[
\kappa_t + \frac{1\!-\!p}{2}\kappa_t^{ 2}+C(n,\kappa_t) ,
\]
under the maximum metric $d_\mathcal{C}^{\varepsilon}$.
Thus $\sup\limits_{x\in\mathcal{O}^n_t} g_t^{\cdot}(x)$ is continuous on $\Theta_t$.
\end{proof}

The proofs of the following two lemmas use Kuratowski-Ryll-Nardzewski measurable selection theorem and measurable maximum theorem many times, so we state them here for convenience. The details can be found in Chapter 18, Aliprantis and Border (2006).

\begin{Def}[Carath\'{e}odory function]
Let $(S, \Sigma)$ be a measurable space, and let $X$ and $Y$ be topological
spaces. A function $f: S \times X \rightarrow Y$ is a Carath\'{e}odory function if:
\begin{enumerate}[(1)]
	\item for each $x\in X$, the function $f^x = f(\cdot,x ): S\rightarrow Y$ is $(\Sigma, \mathcal{B}_Y)$-measurable;
    and
	\item for each $s\in S$, the function $f^s = f(s,\cdot ): X\rightarrow Y$ is continuous.
\end{enumerate}
\end{Def}

\begin{theorem}[Kuratowski-Ryll-Nardzewski measurable selection theorem]
A weakly measurable correspondence with nonempty closed values from a measurable space into
a Polish space admits a measurable selector.
\end{theorem}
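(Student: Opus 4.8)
The plan is to prove the theorem by the classical method of successive measurable approximations, exploiting that weak measurability makes every open ball's hitting set measurable and that the Polish target supplies both a countable dense set and a complete compatible metric. Fix a complete metric $d$ on $X$ inducing its topology and a countable dense subset $\{x_k\}_{k\in\mathbb{N}}$. The only input from weak measurability is the observation that, for each $x\in X$ and $r>0$, the ball $B(x,r)$ is open, so the hitting set $\varphi^l(B(x,r))=\{s\in S:\varphi(s)\cap B(x,r)\neq\varnothing\}$ lies in $\Sigma$ directly from Definition \ref{def weakly measurable correspondence}. Everything afterward is bookkeeping built on this fact together with density, completeness, and closedness of the values.

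First I would construct a sequence of countably-valued measurable maps $f_n:S\to X$ satisfying, for every $s\in S$, the approximation bound $d(f_n(s),\varphi(s))<2^{-n}$ and the Cauchy bound $d(f_n(s),f_{n-1}(s))<2^{-(n-1)}+2^{-n}$ for $n\geq1$. For the base case, the sets $\varphi^l(B(x_k,1))$ cover $S$, since each $\varphi(s)$ is nonempty and the $x_k$ are dense; disjointifying this countable measurable cover and setting $f_0=x_k$ on the $k$-th piece gives a measurable map with $d(f_0(s),\varphi(s))<1$. For the inductive step, on each measurable level set $\{f_{n-1}=x\}$ one has $\varphi(s)\cap B(x,2^{-(n-1)})\neq\varnothing$, so picking a witness $y\in\varphi(s)$ with $d(y,x)<2^{-(n-1)}$ and a dense point $x_k$ with $d(x_k,y)<2^{-n}$ yields $\varphi(s)\cap B(x_k,2^{-n})\neq\varnothing$ and, by the triangle inequality, $d(x_k,x)<2^{-(n-1)}+2^{-n}$. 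The corresponding hitting sets, restricted to those finitely-describable indices $k$ with $d(x_k,x)<2^{-(n-1)}+2^{-n}$, thus form a countable measurable cover of $\{f_{n-1}=x\}$; disjointifying defines $f_n$ on that level set. Since $f_{n-1}$ is countably valued, taking the countable union over level sets produces a globally measurable $f_n$ obeying both bounds.

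Then I would pass to the limit. The Cauchy bound and convergence of $\sum_n(2^{-(n-1)}+2^{-n})$ show that $(f_n(s))_n$ is uniformly Cauchy, so completeness of $d$ gives a pointwise limit $f(s)=\lim_n f_n(s)$; as a pointwise limit of $(\Sigma,\mathcal{B}_X)$-measurable maps into a metric space, $f$ is measurable. The approximation bound yields $d(f(s),\varphi(s))=\lim_n d(f_n(s),\varphi(s))=0$, and since $\varphi(s)$ is closed this forces $f(s)\in\varphi(s)$. Hence $f$ is the desired measurable selector.

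The main obstacle will be the bookkeeping in the inductive step, where the two requirements pull against each other: the Cauchy bound demands that $f_n$ stay near the previous value $f_{n-1}(s)=x$, while the approximation bound demands that it move strictly closer to $\varphi(s)$. The delicate point is to verify that the dense points which simultaneously lie within $2^{-(n-1)}+2^{-n}$ of $x$ and whose $2^{-n}$-balls meet $\varphi(s)$ still cover every level set; this is exactly where the estimate $d(x_k,x)\leq d(x_k,y)+d(y,x)$ for the witness $y\in\varphi(s)\cap B(x,2^{-(n-1)})$ is used, and it is what fixes the constant in the Cauchy bound. Once the covering is confirmed, the measurability of each $f_n$ and the final limit argument are routine.
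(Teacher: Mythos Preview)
The paper does not supply a proof of this statement: it is quoted verbatim as a classical tool from Aliprantis and Border (2006), Chapter 18, alongside the measurable maximum theorem, and is then invoked in Lemmas \ref{existence of theta} and \ref{existence of pi}. There is therefore no ``paper's proof'' to compare against.

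Your argument is the standard successive-approximation proof of the Kuratowski--Ryll-Nardzewski theorem, and it is correct. The inductive construction is sound: on a level set $\{f_{n-1}=x\}$ the hypothesis $d(x,\varphi(s))<2^{-(n-1)}$ furnishes a witness $y\in\varphi(s)\cap B(x,2^{-(n-1)})$, density produces an $x_k$ with $d(x_k,y)<2^{-n}$, and then $y$ itself certifies $s\in\varphi^l(B(x_k,2^{-n}))$ while the triangle inequality gives the Cauchy bound $d(x_k,x)<2^{-(n-1)}+2^{-n}$. The disjointification over the countably many relevant $k$, and then over the countably many level sets of $f_{n-1}$, preserves measurability. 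The passage to the limit is routine: $\sum_n(2^{-(n-1)}+2^{-n})<\infty$ forces uniform Cauchyness, completeness of the compatible metric yields the limit $f$, pointwise limits of Borel maps into a metric space remain Borel, and closedness of $\varphi(s)$ turns $d(f(s),\varphi(s))=0$ into $f(s)\in\varphi(s)$. Nothing is missing.
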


\begin{theorem}[Measurable maximum theorem]
Let $X$ be a separable metrizable space and $(S, \Sigma)$ a measurable space. Let $\varphi:S\twoheadrightarrow X$ be a weakly measurable correspondence with nonempty compact values, and suppose $f: S \times X \rightarrow \R$ is a Carath\'{e}odory function. Define the value function $m: S \rightarrow \R$ by $m(s) =\max\limits_{x\in\varphi(s)}f(s,x)$, and the correspondence $\mu: S\twoheadrightarrow X$ of maximizers by
\[
\mu(s) = \{ x\in \varphi(s): f(s,x) = m(s)\}.
\]
Then:
\begin{enumerate}[(1)]
	\item The value function $m$ is measurable.
	\item The argmax correspondence  $\mu$ has nonempty and compact values.
    \item The argmax correspondence $\mu$ is measurable and admits a measurable selector.
\end{enumerate}
\end{theorem}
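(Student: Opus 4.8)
The plan is to prove the three assertions in the order (2), (1), (3): the nonemptiness and compactness of the argmax is immediate, the measurability of the value function needs a Castaing representation, and the measurability of the argmax correspondence builds on the first two. First I would fix $s\in S$ and note that $f(s,\cdot)$ is continuous on the nonempty compact set $\varphi(s)$, so it attains its maximum; the maximizing set is $\mu(s)=\varphi(s)\cap\bigl(f(s,\cdot)\bigr)^{-1}(\{m(s)\})$, a closed subset of a compact set, hence nonempty and compact. This proves (2) and also shows that $m(s)$ is a genuine maximum rather than merely a supremum.

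For (1), I would use a Castaing representation of $\varphi$: since $\varphi$ is weakly measurable with nonempty closed values into the separable metrizable space $X$ and each value is compact, applying the Kuratowski-Ryll-Nardzewski selection theorem repeatedly to suitably truncated sub-correspondences produces a countable family $\{\sigma_n\}_{n\geq1}$ of measurable selectors of $\varphi$ with $\varphi(s)=\overline{\{\sigma_n(s):n\geq1\}}$ for every $s$. By continuity of $f(s,\cdot)$ and density, $m(s)=\sup_{n\geq1}f(s,\sigma_n(s))$. Each map $s\mapsto f(s,\sigma_n(s))$ is measurable, because a Carath\'{e}odory function is jointly $\Sigma\otimes\mathcal{B}(X)$-measurable and $s\mapsto(s,\sigma_n(s))$ is measurable; a countable supremum of measurable real functions is measurable, so $m$ is measurable.

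For (3), put $g(s,x):=f(s,x)-m(s)$. This is again a Carath\'{e}odory function --- continuous in $x$ because $m(s)$ does not depend on $x$, and measurable in $s$ as a difference of measurable functions --- and by the first step $g(s,\cdot)\leq0$ on $\varphi(s)$ with $\mu(s)=\{x\in\varphi(s):g(s,x)=0\}$. I would then verify weak measurability of $\mu$ directly, showing $\mu^{l}(G)\in\Sigma$ for each open $G\subseteq X$: the Castaing selectors are used once more, expressing $\mu^{l}(G)$ through countably many measurable conditions on the maps $s\mapsto g(s,\sigma_n(s))$ together with an exhaustion of $G$ by closed sets and the measurability --- proved as in step (1) --- of the value function of $g$ over the correspondences $s\mapsto\varphi(s)\cap F$ for $F$ closed. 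Once $\mu$ is weakly measurable with nonempty compact values in the separable metrizable $X$, the Kuratowski-Ryll-Nardzewski theorem yields a measurable selector, and measurability of $\mu$ follows as well.

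The step I expect to be the main obstacle is (3): constructing the Castaing representation at the required level of generality and, above all, carefully converting the description ``$\mu(s)$ is the zero level set of $g$ within $\varphi(s)$'' into a checkable statement about $\mu^{l}(G)$ for arbitrary open $G$, where the interplay between an open $G$ and its closed exhausting pieces must be handled with care. Steps (1) and (2) are routine compactness and selection arguments. Since the statement is classical, in the paper itself I would not reproduce the argument but simply cite Aliprantis and Border (2006, Chapter 18).
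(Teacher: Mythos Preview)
Your proposal is correct, and your final remark is exactly on target: the paper does not prove this theorem at all but simply states it for convenience, referring the reader to Chapter 18 of Aliprantis and Border (2006). Your outlined argument (Castaing representation for the value function, then weak measurability of the argmax correspondence via the shifted Carath\'{e}odory function $g=f-m$, then Kuratowski--Ryll-Nardzewski for the selector) is the standard route and matches what one finds in that reference.
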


\begin{Lemma}\label{existence of theta}
There exists a $\theta^*\in\Theta$ such that $\theta^*_t$ attains the minimum of
$\sup_{x\in\mathcal{O}_t} g_t^{\cdot}(x)$ on $\Theta_t$ for every $t\in[0,T]$.
\end{Lemma}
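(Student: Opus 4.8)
The only genuine difficulty is that $\mathcal{O}_t$ is open rather than compact, so that $(y,M,\mu)\mapsto\sup_{x\in\mathcal{O}_t}g^{(y,M,\mu)}_t(x)$ need not be continuous and the inner supremum need not be attained. The plan is to circumvent this by the inner exhaustion $\mathcal{O}_t=\bigcup_{n}\mathcal{O}^n_t$ already introduced in this appendix. For $t\in[0,T]$, $n\in\mathds{N}$ and $\theta\in\Theta_t$ set
\[
f_n(t,\theta)\triangleq\max_{x\in\mathcal{O}^n_t}g^{\theta}(x).
\]
Since $0\in\mathcal{O}^n_t$ the set is nonempty; it is compact because $\mathcal{O}^n_t\subseteq\mathcal{O}_t$ is bounded by $\kappa_t$ (cf. \eqref{bound of O_t}) and closed; and $g^{\theta}$ is continuous and finite on it, so the maximum is attained. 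Because $\mathcal{O}^n_t\uparrow\mathcal{O}_t$, the numbers $f_n(t,\theta)$ are nondecreasing in $n$ with $\sup_n f_n(t,\theta)=\sup_{x\in\mathcal{O}_t}g^{\theta}(x)$ for every $\theta\in\Theta_t$. I restrict attention to $p<0$, the cases $p\in[0,1)$ being similar and easier.

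Next I would verify that each $f_n$ is a Carath\'eodory function (on the graph of $\Theta$, extending it arbitrarily off the graph if one wishes to invoke the theorems verbatim). Continuity of $\theta\mapsto f_n(t,\theta)$ on $\Theta_t$ for fixed $t$ is exactly Lemma \ref{continuity of theta}. Measurability of $t\mapsto f_n(t,\theta)$ for fixed $\theta$ follows from the measurable maximum theorem applied to the fixed continuous function $x\mapsto g^{\theta}(x)$ and the correspondence $t\twoheadrightarrow\mathcal{O}^n_t$, which has nonempty compact values and is weakly measurable (being built, via $\mathbf{S}_t$, from $\Theta$ by Assumption \ref{ass_2.2} and the weak measurability of $\Theta$). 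Applying the measurable maximum theorem a second time, now to the weakly measurable, compact-valued correspondence $t\twoheadrightarrow\Theta_t$ (compact by Assumption \ref{ass_2.1} and compactness of $\mathcal{C}$) and the Carath\'eodory function $-f_n$, produces a measurable function $m_n(t)=\min_{\theta\in\Theta_t}f_n(t,\theta)$ and a measurable selector $\theta^{(n)}\in\Theta$ with $f_n(t,\theta^{(n)}_t)=m_n(t)$ for all $t$.

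It remains to pass to the limit in $n$ while preserving measurability. Put $m(t)\triangleq\inf_{\theta\in\Theta_t}\sup_{x\in\mathcal{O}_t}g^{\theta}(x)$. Since $f_n\le f_{n+1}\le\sup_k f_k=\sup_{x\in\mathcal{O}_t}g^{\theta}(x)$, the $m_n(t)$ increase and stay below $m(t)$. Fix $t$; by compactness of $\mathcal{C}$ the sequence $(\theta^{(n)}_t)_n$ has a cluster point $\theta^{*}_t$, and $\theta^{*}_t\in\Theta_t$ because $\Theta_t$ is closed. For any fixed $x\in\mathcal{O}_t$ one has $x\in\mathcal{O}^n_t$ for all large $n$, whence $g^{\theta^{(n)}_t}(x)\le f_n(t,\theta^{(n)}_t)=m_n(t)\le m(t)$; letting $n\to\infty$ along a subsequence realizing $\theta^{*}_t$ and using continuity of $\theta\mapsto g^{\theta}(x)$ (the estimates in the proof of Lemma \ref{continuity of theta}, equivalently Lemma \ref{continuous of I}, applied with $x$ fixed) gives $g^{\theta^{*}_t}(x)\le m(t)$, hence $\sup_{x\in\mathcal{O}_t}g^{\theta^{*}_t}(x)\le m(t)$; the reverse inequality is automatic since $\theta^{*}_t\in\Theta_t$, so $\theta^{*}_t$ attains the minimum (in particular the minimum exists). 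Finally, the cluster-point correspondence
\[
L_t\triangleq\bigcap_{k\ge1}\overline{\{\theta^{(n)}_t:n\ge k\}}
\]
has nonempty compact values (nested nonempty compacts of $\mathcal{C}$), satisfies $L_t\subseteq\Theta_t$, and is measurable, being a countable intersection of the correspondences $t\mapsto\overline{\{\theta^{(n)}_t:n\ge k\}}$, each measurable as the closure of the correspondence with graph $\bigcup_{n\ge k}\mathrm{Gr}(\theta^{(n)})$; closures and countable intersections preserving measurability of correspondences. By the Kuratowski--Ryll-Nardzewski theorem $L$ admits a measurable selector $\theta^{*}\in\Theta$, and by the previous step $\theta^{*}_t$ attains the minimum of $\sup_{x\in\mathcal{O}_t}g^{\theta}(x)$ over $\Theta_t$ for every $t$.

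The step I expect to be most delicate is the measurability bookkeeping: showing $t\twoheadrightarrow\mathcal{O}^n_t$ is weakly measurable so that $f_n$ is genuinely Carath\'eodory, and that the limiting correspondence $L$ (equivalently, the set of minimizers) is measurable. One may also bypass the cluster-point construction by observing that $\theta\mapsto\sup_{x\in\mathcal{O}_t}g^{\theta}(x)=\sup_n f_n(t,\theta)$ is lower semicontinuous on the compact set $\Theta_t$, so the minimizing set is nonempty, closed and — as the zero-sublevel set of a normal integrand whose optimal value $t\mapsto m(t)=\lim_n m_n(t)$ is measurable — measurable in $t$, after which Kuratowski--Ryll-Nardzewski applies directly; this merely trades the cluster-point argument for a measurable-selection-under-semicontinuity result.
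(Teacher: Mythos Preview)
Your proof is correct. Both you and the paper exhaust $\mathcal{O}_t$ by the compacta $\mathcal{O}^n_t$, establish that $f_n(t,\theta)=\max_{x\in\mathcal{O}^n_t}g^{\theta}(x)$ is Carath\'{e}odory, and conclude via Kuratowski--Ryll-Nardzewski; the difference lies in how the limiting minimizer correspondence is produced. The paper works directly with the sublevel sets: it shows $t\mapsto m(t)=\lim_n m_n(t)$ is measurable, observes that the minimizer set equals $\bigcap_n\{\theta\in\Theta_t:f_n(t,\theta)\le m(t)\}$ (since $f_n\uparrow\sup_{\mathcal{O}_t}g^{\theta}$), and applies Corollary 18.8 and Lemma 18.4 of Aliprantis--Border to get a measurable, closed-valued correspondence --- this is precisely the alternative you sketch in your last paragraph. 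Your primary route instead extracts measurable approximate minimizers $\theta^{(n)}$ and takes the cluster-point correspondence $L_t$; this costs an extra compactness-and-continuity argument to show cluster points are genuine minimizers, and the measurability of $L_t$ (countable intersections of closures of countable unions of measurable singletons) is a bit more bookkeeping than the paper's sublevel-set intersection, but it has the pedagogical virtue of making the existence of a minimizer explicit without invoking lower semicontinuity. Neither approach addresses the weak measurability of $t\twoheadrightarrow\mathcal{O}^n_t$ any more carefully than the other --- the paper simply asserts the Carath\'{e}odory property from Lemma \ref{continuity of theta}, which only handles the continuity half --- so your caveat about that step is well placed and applies equally to the paper's proof.
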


\begin{proof}
By Lemma \ref{continuity of theta}, the map
$
(t, (y,M,\mu)) \mapsto \max\limits_{x\in\mathcal{O}^n_t} g_t^{(y,M,\mu)}(x)
$
is a Carath\'{e}odory function. Thus, for any $n\in\mathds{N}$,
$
t \mapsto \min_{\Theta_t} \max_{\mathcal{O}^n_t} g_t
$
is measurable by the measurable maximum theorem. Referring to lemma 3.2 of Neufeld and Nutz (2018), we know
\begin{equation}\nonumber
\begin{aligned}
 \min_{\Theta_t} \sup_{\mathcal{O}_t} g_t  = \lim_{n\rightarrow\infty} \min_{\Theta_t} \max_{\mathcal{O}^n_t} g_t
\Rightarrow   t \mapsto \min_{\Theta_t} \sup_{\mathcal{O}_t} g_t \text{ is measurable.}
\end{aligned}
\end{equation}
Therefore, the map
\[
(t, (y,M,\mu)) \mapsto \bigg(\max_{x\in\mathcal{O}^n_t} g_t^{(y,M,\mu)}(x) -  \min_{\Theta_t} \sup_{\mathcal{O}_t} g_t\bigg)
\]
is a Carath\'{e}odory function and the correspondence
\[
t \twoheadrightarrow  \bigg\{  (y,M,\mu)\in \Theta_t:  \max_{x\in\mathcal{O}^n_t} g_t^{(y,M,\mu)}(x) \leq \min_{\Theta_t} \sup_{\mathcal{O}_t} g_t\bigg\}
\]
is measurable (cf. corollary 18.8, Aliprantis and Border (2006)). Since \\
$\{\max_{\mathcal{O}^n_t} g_t^{(y,M,\mu)}\}_{n\in\mathds{N}}$ increasingly converges to $\sup_{\mathcal{O}_t} g_t^{(y,M,\mu)}$, we have
\begin{equation}\nonumber
\begin{aligned}
\bigg\{ (y,M,\mu)\in \Theta_t:  \sup_{x\in\mathcal{O}_t} g_t^{(y,M,\mu)}(x) \leq \min_{\Theta_t} \sup_{\mathcal{O}_t} g_t\bigg\}\\
=\bigcap_{n=1}^\infty \bigg\{ (y,M,\mu)\in \Theta_t:  \max_{x\in\mathcal{O}_t^n} g_t^{(y,M,\mu)}(x) \leq \min_{\Theta_t} \sup_{\mathcal{O}_t} g_t\bigg\},
\end{aligned}
\end{equation}
thus the correspondence
\[
t \twoheadrightarrow  \bigg\{  (y,M,\mu)\in \Theta_t:  \sup_{x\in\mathcal{O}_t} g_t^{(y,M,\mu)}(x) \leq \min_{\Theta_t} \sup_{\mathcal{O}_t} g_t\bigg\}
\]
is measurable (cf. lemma 18.4, Aliprantis and Border (2006)). Finally, by Kuratowski-Ryll-Nardzewski measurable selection theorem, there exists a measurable function $\theta^*$ such that $\theta^*_t \in \Theta_t$ and
\[
\sup_{x\in\mathcal{O}_t} g_t^{\theta^*_t}(x) \leq \min_{\Theta_t} \sup_{\mathcal{O}_t} g_t, ~\forall t\in[0,T].
\]

\end{proof}

\begin{Lemma}\label{existence of pi}
There exists a $\pi^*\in\mathfrak{A}^\pi_0$ such that $\pi^*_t$ attains the maximum of
$\min\limits_{(y, M, \mu)\in \Theta_t}g_t^{(y, M, \mu)}(\cdot)$ on $\mathcal{O}_t$ for every $t\in[0,T]$.
\end{Lemma}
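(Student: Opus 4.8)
The statement is the exact counterpart, on the investment‑policy side, of Lemma \ref{existence of theta}, so the plan is to transcribe that proof with the roles of $\pi$ and $\theta$ interchanged, paying attention to the two places where the situation is genuinely different: the inner operation is now a minimum over the \emph{compact} set $\Theta_t$ rather than a supremum over $\mathcal{O}_t$, while the outer maximization runs over the \emph{open, $t$-dependent} set $\mathcal{O}_t$. As in the body I would write the argument only for $p<0$; for $p\in[0,1)$ it is easier because $g_t^{(y,M,\mu)}$ extends continuously to the compact closure $\overline{\mathcal{O}_t}$ and the exhaustion below is not needed.

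First I would establish the two ingredients a Carath\'eodory function requires. For continuity in $x$: for each fixed $t$ and $n$ the map $(x,(y,M,\mu))\mapsto g_t^{(y,M,\mu)}(x)$ is jointly continuous on $\mathcal{O}_t^n\times\Theta_t$ (the drift and diffusion parts are plainly continuous, and for the jump integral this is the equicontinuity/uniform‑Lipschitz estimate already extracted in the proof of Lemma \ref{continuity of theta}, which rests on Lemma \ref{continuous of I}); since $\Theta_t$ is compact, the maximum theorem then gives that $x\mapsto\min_{(y,M,\mu)\in\Theta_t}g_t^{(y,M,\mu)}(x)$ is continuous on $\mathcal{O}_t^n$, and since each $g_t^{(y,M,\mu)}(\cdot)$ is concave (as $1-p>0$ and $U$ is concave) this minimum is concave too, which will be convenient below. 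For measurability in $t$: fixing $x$, the function $(t,(y,M,\mu))\mapsto -g_t^{(y,M,\mu)}(x)$ does not depend on $t$, hence is trivially measurable in $t$, and it is continuous in $(y,M,\mu)$ by Lemma \ref{continuity of theta}, so the measurable maximum theorem applied to the weakly measurable, compact‑valued correspondence $\Theta$ yields that $t\mapsto\min_{\Theta_t}g_t^{\cdot}(x)$ is measurable. Combining the two, $(t,x)\mapsto\min_{\Theta_t}g_t^{\cdot}(x)$ is a Carath\'eodory function on the graph of $t\twoheadrightarrow\mathcal{O}_t^n$.

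After checking that $t\twoheadrightarrow\mathcal{O}_t^n$ is a weakly measurable correspondence with nonempty compact values — nonemptiness and compactness being immediate from \eqref{bound of O_t} and Assumption \ref{ass_2.2}, and weak measurability having to be deduced from that of $\Theta$ through $\mathbf{S}$ and $\mathrm{Conv}(\mathbf{S}_t\cup\{0\})$ — the measurable maximum theorem gives that $t\mapsto\max_{x\in\mathcal{O}_t^n}\min_{\Theta_t}g_t^{\cdot}(x)$ is measurable and that the associated argmax correspondence admits a measurable selector. Letting $n\to\infty$: since $\mathbf{S}_t$ is closed one has $\mathcal{O}_t=\bigcup_n\mathcal{O}_t^n$, and by the argument of lemma 3.2 of Neufeld and Nutz (2018) — together with the concavity noted above and the Sharpe‑ratio bound of Assumption \ref{ass_CRRA} — the outer supremum is approached from the interior and attained, so $\max_{\mathcal{O}_t^n}\min_{\Theta_t}g_t^{\cdot}\uparrow\sup_{\mathcal{O}_t}\min_{\Theta_t}g_t^{\cdot}$ and $t\mapsto\sup_{\mathcal{O}_t}\min_{\Theta_t}g_t^{\cdot}$ is measurable. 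The maximizer correspondence
\[
t\twoheadrightarrow\Big\{x\in\mathcal{O}_t:\,\min_{\Theta_t}g_t^{\cdot}(x)\geq\sup_{\mathcal{O}_t}\min_{\Theta_t}g_t^{\cdot}\Big\}=\bigcup_{n\in\mathds{N}}\Big\{x\in\mathcal{O}_t^n:\,\min_{\Theta_t}g_t^{\cdot}(x)\geq\sup_{\mathcal{O}_t}\min_{\Theta_t}g_t^{\cdot}\Big\}
\]
is then measurable (a countable union of measurable correspondences, cf. lemma 18.4 of Aliprantis and Border (2006)) with nonempty closed values, and Kuratowski--Ryll--Nardzewski supplies the desired $\pi^*\in\mathfrak{A}^\pi_0$.

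I expect the main obstacle to be the last paragraph: establishing weak measurability of the exhausting correspondences $t\twoheadrightarrow\mathcal{O}_t^n$ from that of $\Theta$ (the support map behaves badly under weak convergence, so this must be routed through the structure imposed in Assumption \ref{ass_2.2}), and justifying both the interchange $\sup_{\mathcal{O}_t}\min_{\Theta_t}=\lim_n\max_{\mathcal{O}_t^n}\min_{\Theta_t}$ and the attainment of the outer supremum over the \emph{open} set $\mathcal{O}_t$ — precisely the point where the closedness of $\mathbf{S}_t$ (``existence of an optimal policy over a nonclosed admissible policy set'') and Assumption \ref{ass_CRRA} are used. Everything else is a routine transcription of the proof of Lemma \ref{existence of theta}.
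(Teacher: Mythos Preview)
Your plan is essentially the paper's: transcribe Lemma~\ref{existence of theta} with $\pi$ and $\theta$ swapped, build the Carath\'eodory function $(t,x)\mapsto\min_{\Theta_t}g_t^{\cdot}(x)$ via Lemma~\ref{continuous of I} and the measurable maximum theorem, exhaust $\mathcal{O}_t$ by the compacts $\mathcal{O}_t^n$, show the argmax correspondence is measurable as a countable union, and apply Kuratowski--Ryll--Nardzewski.

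One correction: Assumption~\ref{ass_CRRA} is \emph{not} used here, and you should not invoke it. Attainment of $\sup_{\mathcal{O}_t}\min_{\Theta_t}g_t$ over the open set $\mathcal{O}_t$ comes solely from the closedness of $\mathbf{S}_t$, exactly as the paper argues: for any boundary point $\check{x}\in\partial\mathcal{O}_t$ there is (by closedness) some $\check{z}\in\mathbf{S}_t$ with $\check{x}^T\check{z}=-1$, hence some $(\check{y},\check{M},\check{\mu})\in\Theta_t$ with $\check{z}\in\mathrm{supp}(\check{\mu})$; since $U(1+x^T\check{z})\to-\infty$ as $x\to\check{x}$ (for $p\leq 0$), the jump integral drives $g_t^{(\check{y},\check{M},\check{\mu})}(x)\to-\infty$, so $\min_{\Theta_t}g_t^{\cdot}(x)\to-\infty$ at the boundary and the concave supremum is attained in the interior. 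The Sharpe-ratio bound enters only in Theorem~\ref{g_monotony} (monotonicity of $G$ for $p<0$), not in the measurable saddle-point construction. Also, your remark that for $p\in[0,1)$ the kernel extends continuously to $\overline{\mathcal{O}_t}$ is only correct for $p\in(0,1)$; for $p=0$ the logarithmic term still blows up and the same boundary argument applies.
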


\begin{proof}
Using \eqref{Lip of I wrt x} in Lemma \ref{continuous of I} and imitating the proof of Lemma \ref{continuity of theta}, we can prove that the map
\[
x \mapsto \min_{(y,M,\mu)\in \Theta_t} g_t^{(y,M,\mu)}(x)
\]
is continuous on $\mathcal{O}^n_t$ for any $t\in[0,T]$ and $n\in\mathds{N}$.
Similarly to the proof of Lemma \ref{existence of theta}, the correspondence
\[
t \twoheadrightarrow  \bigg\{  x\in \mathcal{O}^n_t:  \min_{(y,M,\mu)\in \Theta_t} g_t^{(y,M,\mu)}(x) \geq \sup_{\mathcal{O}_t}\min_{\Theta_t}  g_t\bigg\}
\]
is measurable for any $n\in\mathds{N}$, thus
\[
t \twoheadrightarrow  \bigg\{  x\in \mathcal{O}_t:  \min_{(y,M,\mu)\in \Theta_t} g_t^{(y,M,\mu)}(x) \geq \sup_{\mathcal{O}_t}\min_{\Theta_t}  g_t\bigg\}
\]
is measurable as an union correspondence. It can be verified that this correspondence is nonempty by the assumption that $\mathbf{S}_t $ is closed. Indeed, for any $\check{x}\in \partial\mathcal{O}_t$, there exists a $(\check{y},\check{M},\check{\mu})\in \Theta_t$ such that
\[
\exists\check{z}\in \mathrm{supp}(\check{\mu}), x^Tz=-1.
\]
Thus, as $x$ approaches $\check{x}$, $ g_t^{(\check{y},\check{M},\check{\mu})}(x)$ decreases at infinite rate so that the supremum of $\min_{\Theta_t}g_t$ over $\mathcal{O}_t$ can be attained.
Finally, by Kuratowski-Ryll-Nardzewski measurable selection theorem, a measurable $\pi^*$ exists satisfying
\[
\min_{(y,M,\mu)\in \Theta_t} g_t^{(y,M,\mu)}(\pi^*_t) \geq \sup_{\mathcal{O}_t}\min_{\Theta_t}  g_t, ~\forall t\in[0,T].
\]

\end{proof}
\vskip 10pt
\section{Proofs of Theorems \ref{optimal c} and \ref{optimal c CARA}}\label{app opt c}
We firstly introduce the Lemma \ref{verify thm}, which is the so-called verification theorem in dynamic programming.
\begin{Lemma}\label{verify thm}
For any $p\in (-\infty,0)\cup(0,1)$, $\theta \in \Theta$, and $\pi\in \mathfrak{A}^\pi_0$, there exists an unique function $V\in C^1([0,T])$ satisfying the following HJB equation:
\begin{equation}\label{HJB eq}
\left\{
\begin{aligned}
&\frac{dV(t)}{dt}+ g^{\theta_t}(\pi_t)(1+pV(t))+\sup_{c_t\geq 0}\{U(c_t)-c_t ( 1+pV(t))\}=0,\\
&V(T)= 0.
\end{aligned}
\right.
\end{equation}
The range of $V$ is $[V_{\min}, V_{\max}]$, where
\begin{equation}\nonumber
\begin{aligned}
&g_{\max} \triangleq \sup_{t\in[0,T]}g^{\theta_t}(\pi_t),
&g_{\min} \triangleq \inf_{t\in[0,T]}g^{\theta_t}(\pi_t),
\end{aligned}
\end{equation}
and
\begin{equation}\label{min of V}
V_{\min} \triangleq\left\{
\begin{aligned}
&0,&&g_{\min}\geq\frac{1-p}{-p},\\
&-\infty,&&p<0,g_{\min}\leq0,\\
&\frac{1}{p}\bigg[\bigg(\frac{-pg_{\min}}{1-p}\bigg)^{p-1}-1\bigg],&&\mathrm{otherwise},\\
\end{aligned}
\right.
\end{equation}
and
\begin{equation}\label{max of V}
V_{\max} \triangleq\left\{
\begin{aligned}
&0,&&g_{\max}\leq\frac{1-p}{-p},\\
&+\infty,&&p>0,g_{\max}\geq0,\\
&\frac{1}{p}\bigg[\bigg(\frac{-pg_{\max}}{1-p}\bigg)^{p-1}-1\bigg],&&\mathrm{otherwise}.\\
\end{aligned}
\right.
\end{equation}
Especially,  $V(0)$ is the upper bound of $G(\pi,\cdot,\theta)$ over $\mathfrak{A}^c_0$, i.e.,
$$ \sup_{c\in \mathfrak{A}^c_0}G(\pi,c,\theta)\leq V(0).$$
\end{Lemma}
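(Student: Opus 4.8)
\emph{Proof strategy.} The plan is to reduce \eqref{HJB eq} to an explicit linear scalar ODE and then run a verification argument. Writing $A:=1+pV(t)$, the Hamiltonian $\sup_{c_t\geq0}\{U(c_t)-c_tA\}$ is elementary: for $A>0$ the maximiser is $c_t=A^{1/(p-1)}$ and the value equals $\tfrac{1-p}{p}A^{-p/(1-p)}$, while the value is $+\infty$ for $A\leq0$ (if $p\in(0,1)$) and for $A<0$ (if $p<0$). Hence \eqref{HJB eq} is meaningful precisely on $\{1+pV>0\}$, where it becomes $V'+g^{\theta_t}(\pi_t)(1+pV)+\tfrac{1-p}{p}(1+pV)^{-p/(1-p)}=0$, $V(T)=0$. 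Next I would substitute $Z(t):=(1+pV(t))^{1/(1-p)}$; on $\{1+pV>0\}$ this is a smooth bijection with $Z(T)=1$, and a direct computation shows $V$ solves the HJB if and only if $Z$ solves the linear equation $Z'(t)=-\tfrac{p\,g^{\theta_t}(\pi_t)}{1-p}Z(t)-1$, $Z(T)=1$, whose unique solution is
\begin{equation}\nonumber
Z(t)=\exp\!\Big(\int_t^T\tfrac{p\,g^{\theta_u}(\pi_u)}{1-p}\,du\Big)+\int_t^T\exp\!\Big(\int_t^s\tfrac{p\,g^{\theta_u}(\pi_u)}{1-p}\,du\Big)\,ds\;>\;0.
\end{equation}
Thus $V(t)=\tfrac1p\big(Z(t)^{1-p}-1\big)$ is the unique solution of \eqref{HJB eq}, it is $C^1$ (the Carath\'{e}odory solution when $g^{\theta_\cdot}(\pi_\cdot)$ is only measurable), the strict positivity of $Z$ yields $1+pV=Z^{1-p}>0$ so the Hamiltonian stays finite along it, and evaluating at $t=0$ reproduces the value in \eqref{optm G with c*}.

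For the range of $V$, note that $z\mapsto\tfrac1p(z^{1-p}-1)$ is increasing when $p\in(0,1)$ and decreasing when $p<0$, so it is enough to find $\min_tZ(t)$ and $\max_tZ(t)$. With $a(t):=\tfrac{p}{1-p}g^{\theta_t}(\pi_t)$, which takes values in an interval whose endpoints are read off from $g_{\min},g_{\max}$ and $\mathrm{sgn}(p)$, I would run the equation backward from $t=T$ and compare by Gr\"{o}nwall with the constant-coefficient flows $\dot Z=\alpha Z+1$ started at $Z=1$; this sandwiches $Z$ between the two extremal profiles, and together with $Z(T)=1$ it pins down $\min_tZ$ and $\max_tZ$. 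Transporting these through $V=\tfrac1p(Z^{1-p}-1)$ gives exactly \eqref{min of V}--\eqref{max of V}; the threshold $\tfrac{1-p}{-p}$ is precisely the value of $g$ at which $a=-1$, which separates the monotone regimes of the constant-coefficient flows, and the $\pm\infty$ entries are the cases in which one side of the bound is vacuous. Under Assumption \ref{ass_CRRA} one moreover has $g^{\theta_t}(\pi_t)\leq\tfrac{1-p}{-p}$ for $p<0$ (cf. \eqref{est_g_orig}), hence $g_{\max}\leq\tfrac{1-p}{-p}$ and $V_{\max}=0$ there.

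Finally, for $\sup_{c}G(\pi,c,\theta)\leq V(0)$, fix $c\in\mathfrak{A}^c_0$; I may assume $G(\pi,c,\theta)>-\infty$ (else nothing to prove), in which case the integrals below converge. Put $M(t):=\exp\!\big(\int_0^tp(g^{\theta_s}(\pi_s)-c_s)\,ds\big)$, so $M(0)=1$ and $M'=Mp(g^{\theta_t}(\pi_t)-c_t)$, and set
\begin{equation}\nonumber
\Psi(t):=\int_0^tM(s)\big(g^{\theta_s}(\pi_s)-c_s+U(c_s)\big)\,ds+M(t)V(t),
\end{equation}
so that $\Psi(0)=V(0)$ and, since $V(T)=0$, $\Psi(T)=G(\pi,c,\theta)$. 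Differentiating and inserting $V'(t)$ from \eqref{HJB eq},
\begin{equation}\nonumber
\Psi'(t)=M(t)\Big[U(c_t)-c_t(1+pV(t))-\sup_{c\geq0}\{U(c)-c(1+pV(t))\}\Big]\leq0,
\end{equation}
because $M(t)>0$ and the bracket is nonpositive by definition of the supremum (here $1+pV>0$ from the previous paragraph is used). Hence $G(\pi,c,\theta)=\Psi(T)\leq\Psi(0)=V(0)$, and taking the supremum over $c$ completes the proof; equality along the $c$ that realises the pointwise maximiser is Theorem \ref{optimal c}. The most laborious step is the range analysis, whose sign bookkeeping branches on $p\gtrless0$ and on where $g_{\min},g_{\max}$ sit relative to $0$ and $\tfrac{1-p}{-p}$; the other technical point is the convergence of the integrals defining $\Psi$ when $c$ is not integrable, dispatched by noting that then $G(\pi,c,\theta)=-\infty$ for $p<0$ while it stays finite for $p\in(0,1)$, so the argument applies in both cases.
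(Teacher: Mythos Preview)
Your argument is correct, and the verification step (your $\Psi$ is nonincreasing) is the paper's Newton--Leibniz computation rewritten in differential form. Where you genuinely diverge is in existence/uniqueness and in the range estimate. The paper argues abstractly: it first shows any $C^1$ solution must satisfy $1+pV>0$ (else the Hamiltonian blows up), rewrites \eqref{HJB eq} as $V'=f(t,V)$ with $f(t,x)=-g^{\theta_t}(\pi_t)(1+px)-\tfrac{1-p}{p}(1+px)^{p/(p-1)}$, checks that $f(t,\cdot)$ is Lipschitz on $[V_{\min},V_{\max}]$ away from the singularity $x=-1/p$, and invokes Picard--Lindel\"of; the range is obtained by a barrier/invariance argument, verifying $V'\big|_{V=V_{\max}}\ge0$ and $V'\big|_{V=V_{\min}}\le0$ so the backward flow from $V(T)=0$ cannot exit the interval. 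You instead linearise via $Z=(1+pV)^{1/(1-p)}$, get $Z'=-\tfrac{p\,g}{1-p}Z-1$ with $Z(T)=1$, and write down $Z$ explicitly; this is exactly the relation $c^*=1/Z$ that the paper only uncovers later in the proof of Theorem~\ref{optimal c} (there it passes through the Riccati equation for $c$). Your route is more constructive and immediately delivers \eqref{optm G with c*}; the paper's route is slightly more robust in that it does not require solving anything in closed form, and its barrier check makes the case split in \eqref{min of V}--\eqref{max of V} transparent: $V_{\min},V_{\max}$ are precisely the equilibria of the autonomous comparison ODEs, which is the same content as your ``$a=-1$ threshold'' remark. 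One caveat worth flagging (which you already note): with $g^{\theta_\cdot}(\pi_\cdot)$ merely measurable, both approaches really yield an absolutely continuous Carath\'eodory solution rather than a classical $C^1$ one; the paper's statement should be read in that sense.
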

\begin{proof}
There are three parts in this proof. The first part proves that any $C^1$ solution of HJB equation \eqref{HJB eq} takes value in $[V_{\min}, V_{\max}]$. The second part shows the existence and uniqueness by Picard-Lindel\"{o}f theorem. The third part verifies that $V(0)$ is the upper bound.

\textit{Boundary.}
If $V$ is a solution of HJB equation in $C^1([0,T])$, we assert
\[
1+pV(t)>0, ~\forall t\in[0,T].
\]
If not, there exists $t_0$ such that $1+pV(t_0)\leq0$, thus $\frac{dV(t)}{dt}\big|_{t=t_0} = -\infty$. This contradicts to the continuity of $V$ on $[0,T]$. Therefore, $c_t = ( 1+pV(t))^{\frac{1}{p-1}}\geq0$ attains the supremum of
$U(c_t)-c_t ( 1+pV(t))$ and the HJB equation \eqref{HJB eq} becomes
\begin{equation}\nonumber
\left\{
\begin{aligned}
&\frac{dV(t)}{dt}= f(t,V(t)),~\forall t\in[0,T],\\
&V(T)= 0,
\end{aligned}
\right.
\end{equation}
where $f(t,x) \triangleq -g^{\theta_t}(\pi_t)(1+px)-\frac{1-p}{p}( 1+px)^{\frac{p}{p-1}}$. By the knowledge of dynamical system, it is sufficient to verify the following two inequalities for demonstrating that $[V_{\min}, V_{\max}]$ is the range of $V$:
\[
\frac{dV(t)}{dt}\bigg|_{V(t)=V_{\max}}\geq0, ~\text{if }V_{\max}<+\infty,
\]
\[
\frac{dV(t)}{dt}\bigg|_{V(t)=V_{\min}}\leq0,  ~\text{if }V_{\min}>-\infty.
\]
They can be easily verified when $V_{\max}$ and $V_{\min}$ are defined by \eqref{min of V} and \eqref{max of V} respectively.

\textit{Existence and uniqueness.}
By the Picard-Lindel\"{o}f theorem, we only need to show $f$ is Lipschitz continuous on $[V_{\min}, V_{\max}]$. Noticing $\frac{1-p}{p}( 1+px)^{\frac{p}{p-1}}$ is Lipschitz continuous in the domain that avoids the singular point $\frac{1}{-p}$, we can conclude $f$ is Lipschitz continuous because the interval $[V_{\min}, V_{\max}]$, which is defined by \eqref{min of V} and \eqref{max of V}, strictly avoids $\frac{1}{-p}$.

\textit{Upper bound.}
For any $c\in \mathfrak{A}^c_0$, by Newton-Leibniz formula and $V(T)=0$,
\begin{equation}\label{verify ineq}
\begin{aligned}
V(0)
=& V(0)-\exp\bigg(  \!\int_0^T \!p(g^{\theta_u}(\pi_u) \!-\!c_u) du\!\bigg)V(T)\\
=&-\!\!\int_0^T \!\!\!\exp\bigg(  \!\int_0^t \!\!p(g^{\theta_u}(\pi_u) \!-\!c_u) du\!\bigg)(V'(t)+p(g^{\theta_t}(\pi_t) \!-\!c_t)V(t))dt\\
\geq&\int_0^T \!\exp\bigg(  \!\int_0^t \!\!p(g^{\theta_u}(\pi_u) \!-\!c_u) du\!\bigg)(g^{\theta_t}(\pi_t) \!-\!c_t+U(c_t))dt.
\end{aligned}
\end{equation}
Due to the arbitrariness of $c$, we conclude $V(0)\geq \sup\limits_{c\in \mathfrak{A}^c_0}G(\pi,c,\theta)$.
\end{proof}

In above lemma, we do not prove a corresponding conclusion for the logarithmic utility ($p=0$), because the variational method is enough for logarithmic utility. See the following proof.

\vskip 10pt
\textit{\textbf{Proof of Theorem \ref{optimal c}}.}
This proof has two steps. The first step is demonstrating the optimality of $c^*$, and the second step is estimating the range of $c^*$.

\vskip 5pt
\textit{Step 1: Optimality.}
We use different approaches to demonstrate the optimality for logarithmic utility and power utility. The variational method solves the case of $p=0$, while the dynamic programming works for $p\neq0$.

\textit{Case 1: $p=0$.}
For any $\tilde{c}\in \mathfrak{A}^c_0$, let $\bar{c}  \triangleq \tilde{c} - c^*$ be the variation. For any $\delta\in[0,1]$,
\begin{equation}\nonumber
\begin{aligned}
&\frac{d}{d\delta}G(\pi, c^*+\delta \bar{c}, \theta)
\\
=&\frac{d}{d\delta}\int_0^T\!\!\!\bigg(  \int_0^t\!\! -(c^*_s+\delta \bar{c}_s) ds+\log(c^*_t+\delta \bar{c}_t)-(c^*_t+\delta \bar{c}_t)\bigg) dt \\
=&\int_0^T\!\!\!  \bigg(\int_0^t\!\! -\bar{c}_s  ds+  \frac{\bar{c}_t}{c^*_t+\delta \bar{c}_t}-\bar{c}_t \bigg)dt\\
=&\int_0^T\!\!\!    \bigg(\frac{1}{c^*_t+\delta \bar{c}_t}-(T-t+1)\bigg)\bar{c}_t dt.
\end{aligned}
\end{equation}
For any positive or negative $\bar{c}_t$, notice
\begin{equation}\nonumber
\begin{aligned}
c^*_t = (T-t+1)^{-1}, \delta>0
 ~\Rightarrow ~\left(\frac{1}{c^*_t+\delta \bar{c}_t}-(T-t+1)\right)\bar{c}_t \leq 0,
\end{aligned}
\end{equation}
thus $G(\pi, \tilde{c}, \theta)\leq G(\pi, c^*, \theta)$.

\textit{Case 2: $p\neq0$.}
Lemma \ref{verify thm} shows that $V(0)$ is a upper bound of $G(\pi,\cdot,\theta)$ over $\mathfrak{A}^c_0$. So it is sufficient to verify $G(\pi,c^*,\theta) = V(0)$, where $c^*$ is defined by \eqref{exp of c*}. Because $c_t = ( 1+pV(t))^{\frac{1}{p-1}}$ attains the supremum in HJB equation \eqref{HJB eq}, we substitute $V(t)=\frac{1}{p}(c_t^{p-1}-1)$ into \eqref{HJB eq} and get
\begin{equation}\nonumber
\left\{
\begin{aligned}
&\frac{dc_t}{dt}= \frac{pg_t}{1-p} c_t+ c^2_t,\\
&c_T=1.
\end{aligned}
\right.
\end{equation}
This is a Riccati equation and $c^*$ is the unique solution. Since $c^*$ satisfies
$V'(t)+p(g_t \!-\!c^*_t)V(t)=g_t \!-\!c^*_t+U(c^*_t)$,
the inequality  in \eqref{verify ineq} becomes an equality, i.e.,
$$
G(\pi,c^*,\theta) = V(0) =\frac{(c^*_0)^{p-1}-1}{p}.
$$

\vskip 5pt
\textit{Step 2: Boundary.}
The range \eqref{range of c} of $c^*$ could be immediately obtained by \eqref{min of V} and \eqref{max of V}, once noticing $c_t = ( 1+pV(t))^{\frac{1}{p-1}}$ is decreasing with respect to $V(t)$ for $p>0$ and increasing for $p<0$.
$\hfill\square$

\vskip 15pt

\begin{Lemma}\label{verify thm CARA}
For any $\theta \in \Theta$ and $\Pi\in \mathfrak{A}^\Pi_0$, there exists an unique $V\in C^1([0,T])$ satisfying the following HJB equation:
\begin{equation}\label{HJB eq CARA}
\left\{
\begin{aligned}
&\frac{dV(t)}{dt}+ q_th_t^{\theta_t}(\Pi_t)(1\!-\!aV(t))+\sup_{D_t\in\R}\{U(D_t)-q_tD_t ( 1 \!-\! a V(t))\}=0,\\
&V(T)= 0.
\end{aligned}
\right.
\end{equation}
The range of $V$ is $[V_{\min}, V_{\max}]$, where
\begin{equation}\nonumber
\begin{aligned}
&h_{\max} \triangleq \sup_{t\in[0,T]}h_t^{\theta_t}(\Pi_t),
&h_{\min} \triangleq \inf_{t\in[0,T]}h_t^{\theta_t}(\Pi_t),
\end{aligned}
\end{equation}
and
\begin{equation}\label{min of V CARA}
V_{\min}=\left\{
\begin{aligned}
&0,&&h_{\min}\geq\frac{1}{a}(1\!+\!\log(T\!+\!1)),\\
&\frac{1}{a}(1\!-\!(T\!+\!1)e^{1-ah_{\min}}),&&h_{\min}<\frac{1}{a}(1\!+\!\log(T\!+\!1)),
\end{aligned}
\right.
\end{equation}
and
\begin{equation}\label{max of V CARA}
V_{\max}=\left\{
\begin{aligned}
&\frac{1}{a}(1\!-\!e^{1-ah_{\max}}),&&h_{\max}\geq\frac{1}{a},\\
&0,&&h_{\max}\leq\frac{1}{a}.
\end{aligned}
\right.
\end{equation}
Especially, $V(0)$ is the upper bound of global kernel over $\mathfrak{A}_0^D$, i.e.,
$$ \sup_{D\in \mathfrak{A}_0^D}H(\Pi,D,\theta)\leq V(0).$$
\end{Lemma}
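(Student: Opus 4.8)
The plan is to adapt the proof of Lemma~\ref{verify thm} to the CARA kernel, keeping the same three-part structure: first expose the scalar ODE hidden in \eqref{HJB eq CARA}, then pin down the invariant interval $[V_{\min},V_{\max}]$, and finally run Picard--Lindel\"{o}f together with a Newton--Leibniz comparison. The first step is to carry out the inner supremum in \eqref{HJB eq CARA}: the first-order condition for $D_t\mapsto U(D_t)-q_tD_t(1-aV(t))$ is $e^{-aD_t}=q_t(1-aV(t))$, which has a solution precisely when $1-aV(t)>0$, namely $D_t=-a^{-1}\log\!\big(q_t(1-aV(t))\big)$, with optimal value $a^{-1}q_t(1-aV(t))\big(\log(q_t(1-aV(t)))-1\big)$. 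Substituting this back reduces \eqref{HJB eq CARA} to the Cauchy problem $V'(t)=f(t,V(t))$, $V(T)=0$, with
\[
f(t,x)=-q_th_t^{\theta_t}(\Pi_t)(1-ax)-\tfrac1a q_t(1-ax)\big(\log(q_t(1-ax))-1\big),
\]
defined on the half-plane $1-ax>0$. This is the exact analogue of the Riccati-type reduction in Lemma~\ref{verify thm}, with $1+pV$ replaced by $1-aV$ and the time-dependent scale $q_t$ carried along.

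Next I would handle the range. As in the CRRA case, any $C^1$ solution automatically satisfies $1-aV(t)>0$ on all of $[0,T]$: at a point where $1-aV$ vanishes the right-hand side of \eqref{HJB eq CARA} blows up to $-\infty$, contradicting $V\in C^1$, which legitimizes the reduction above. To see that $[V_{\min},V_{\max}]$ is the precise range, I would invoke the same one-dimensional invariance argument used in Lemma~\ref{verify thm}: it suffices to verify $f(t,V_{\max})\ge 0$ for every $t$ when $V_{\max}<+\infty$, and $f(t,V_{\min})\le 0$ for every $t$ when $V_{\min}>-\infty$. Inserting the formulas \eqref{min of V CARA}--\eqref{max of V CARA} and using $q_t\le q_0=(T+1)^{-1}$ and $q_t\le 1$, both inequalities collapse to elementary monotonicity statements about $s\mapsto s\log s$; in particular the threshold $\tfrac1a(1+\log(T+1))$ separating the two branches of $V_{\min}$ is forced precisely by $q_0=(T+1)^{-1}$. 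I expect this case-checking --- matching the piecewise definitions of $V_{\min}$ and $V_{\max}$ against the sign of $f$ uniformly in $t$ --- to be the main technical obstacle, exactly as it was in Lemma~\ref{verify thm}.

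For existence and uniqueness I would note that, by the range computation, the interval $[V_{\min},V_{\max}]$ stays bounded away from the singular value $x=1/a$ of $f$ (indeed $1-aV\ge\min\{1,e^{1-ah_{\max}}\}>0$ there), so $f(t,\cdot)$ is Lipschitz on this interval uniformly in $t\in[0,T]$; Picard--Lindel\"{o}f then yields a unique $C^1$ solution. Finally, for the upper bound I would fix an arbitrary $D\in\mathfrak{A}_0^D$, apply the Newton--Leibniz formula to $t\mapsto\exp\!\big(\int_0^t -aq_s(h_s^{\theta_s}(\Pi_s)-D_s)\,ds\big)V(t)$, use $V(T)=0$, and estimate $V'(t)$ from below via the fact that the chosen $D_t$ is only suboptimal in the supremum in \eqref{HJB eq CARA}; exactly as in \eqref{verify ineq} this gives
\[
V(0)\ \ge\ \int_0^T\exp\!\Big(\!\int_0^t -aq_s(h_s^{\theta_s}(\Pi_s)-D_s)\,ds\Big)\big(q_t(h_t^{\theta_t}(\Pi_t)-D_t)+U(D_t)\big)\,dt\ =\ H(\Pi,D,\theta),
\]
and taking the supremum over $D$ yields $\sup_{D\in\mathfrak{A}_0^D}H(\Pi,D,\theta)\le V(0)$, as claimed.
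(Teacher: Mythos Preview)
Your approach is essentially identical to the paper's: same three-part structure (reduction to the scalar ODE via the optimal $D_t=-a^{-1}\log(q_t(1-aV(t)))$, invariance check for $[V_{\min},V_{\max}]$, Picard--Lindel\"{o}f, Newton--Leibniz comparison), and the paper in fact gives fewer details than you do on the invariance step. One small slip: you write $q_t\le q_0=(T+1)^{-1}$, but $q_t=(T-t+1)^{-1}$ is increasing, so in fact $q_0\le q_t\le 1$; it is precisely $q_t\ge q_0$ (equivalently $q_t(T+1)\ge 1$) that is needed in the $V_{\min}$ check, while $q_t\le 1$ handles the $V_{\max}$ check.
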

\begin{proof}
Same as Lemma \ref{verify thm}, this proof also has three parts.

\textit{Boundary.}
If $V$ is the solution of HJB equation in $C^1([0,T])$, we assert $1-aV(t)>0$ for any $t\in[0,T]$. If not, there exists $t_0$ satisfying $1-aV(t_0)\leq0$, thus $\frac{dV(t)}{dt}\big|_{t=t_0} = -\infty$. This contradicts to the continuity of $V$ on $[0,T]$. Therefore, $D_t = \frac{1}{-a}\log(q_t(1-aV(t)))$ is well-defined and attains the supremum of $U(D_t)-q_tD_t ( 1 - a V(t))$. The HJB equation \eqref{HJB eq CARA} becomes
\begin{equation}\nonumber
\left\{
\begin{aligned}
&\frac{dV(t)}{dt}= f(t,V(t)),~\forall t\in[0,T],\\
&V(T)= 0,
\end{aligned}
\right.
\end{equation}
where $f(t,x) \triangleq \frac{1}{a}q_t (1-ax)[1-a h_t^{\theta_t}(\Pi_t)-\log(q_t(1-ax))]$.
By the knowledge of dynamical system, it is sufficient to verify the following two inequalities for demonstrating that $[V_{\min}, V_{\max}]$ is the range of $V$:
\[
\frac{dV(t)}{dt}\bigg|_{V(t)=V_{\max}}\geq0, ~\text{if }V_{\max}<+\infty,
\]
\[
\frac{dV(t)}{dt}\bigg|_{V(t)=V_{\min}}\leq0,  ~\text{if }V_{\min}>-\infty.
\]
They both hold when $V_{\max}$ and $V_{\min}$ are defined by \eqref{min of V CARA} and \eqref{max of V CARA}.

\textit{Existence and uniqueness.}
By the Picard-Lindel\"{o}f theorem, the conclusion holds because $f$ is  Lipschitz continuous on $[V_{\min}, V_{\max}]$.

\textit{Upper bound.}
For any $D\in \mathfrak{A}_0^D$, by Newton-Leibniz formula and $V(T)=0$,
\begin{equation}\label{verify ineq CARA}
\begin{aligned}
V(0)
=&V(0)\!-\!\exp\bigg( \!\int_0^T \!\!-\!aq_sh_s \!+\!aq_sD_s ds\bigg)V(T)\\
=&\!-\!\!\int_0^T \!\!\!\exp\bigg( \!\int_0^t \!\!\!- aq_sh_s \!+\!aq_sD_s du\!\bigg)\bigg(V'(t)\!-\!aq_t(h_t \!-\!D_t)V(t)\bigg)dt\\
\geq&\int_0^T\!\!\! \exp\bigg( \!\int_0^t \!\!\!- aq_sh_s +aq_sD_s du\!\bigg)\bigg(q_th_t \!-\! q_tD_t + U(D_t)\bigg)dt.
\end{aligned}
\end{equation}
By the arbitrariness of $D$, we conclude
$
V(0)\geq\sup\limits_{D\in \mathfrak{A}_0^D}H(\Pi,D,\theta).
$
\end{proof}

\vskip 10pt
\textit{\textbf{Proof of Theorem \ref{optimal c CARA}}.}
\vskip 5pt
Lemma \ref{verify thm CARA} gives an upper bound of $H(\Pi,\cdot,\theta)$ over $\mathfrak{A}_0^D$. Thus we only need to verify $H(\Pi,D^*,\theta) = V(0)$ where $D^*$ is defined by \eqref{exp of c* CARA}.
Recalling  $D_t = \frac{1}{-a}\log(q_t(1-aV(t)))$ attains the supremum in HJB equation \eqref{HJB eq CARA} and substituting $V(t)=\frac{1}{a}(1-q_t^{-1}e^{-aD_t})$ into \eqref{HJB eq CARA}, we obtain
\begin{equation}\nonumber
\left\{
\begin{aligned}
&\frac{dD_t}{dt}= q_t(D_t-h_t),\\
&D_T=0.
\end{aligned}
\right.
\end{equation}
$D^*$ defined in \eqref{exp of c* CARA}  is exactly the unique solution. Because $D^*$ satisfies
\[
V'(t)-aq_t(h_t \!-\!D^*_t)V(t)= q_th_t - q_tD^*_t + U(D^*_t),
\]
we conclude that $H(\Pi,D^*,\theta) = V(0) =\frac{1}{a}(1-q_0^{-1}e^{-aD_0})$ by \eqref{verify ineq CARA}.
$\hfill\square$

\vskip 10pt
{\bf Acknowledgements.}
The authors acknowledge the support from the National Natural Science Foundation of China (Grant No.11471183, No.11871036). The authors also thank the members of the group of Insurance Economics and Mathematical Finance at the Department of Mathematical Sciences, Tsinghua University for their feedbacks and useful conversations.

\vskip 10pt
\setcounter{equation}{0}

\end{document}